\documentclass[10pt,a4paper,twoside]{amsart} 
\usepackage[utf8]{inputenc}
\usepackage[english]{babel}
\usepackage{graphicx}
\usepackage[a4paper,width=150mm,top=25mm,bottom=25mm,bindingoffset=6mm]{geometry}
\usepackage{fancyhdr}

\usepackage[bookmarks=false]{hyperref}
\usepackage{frontespizio}

\usepackage{float}
\usepackage{amssymb}
\usepackage{latexsym}
\usepackage{amsthm}
\usepackage{amsmath}
\usepackage{amssymb}
\usepackage{dsfont}
\usepackage{mathrsfs}
 \usepackage{hyperref}
 \usepackage{multirow}
 \usepackage{booktabs}

\usepackage{tikz}
\usepackage{tikz-qtree}

\usepackage[skip=2pt]{caption}
\usepackage{tensor}

\makeatletter
\setcounter{tocdepth}{3}

\renewcommand{\tocsection}[3]{%
  \indentlabel{\@ifnotempty{#2}{\bfseries\ignorespaces#1 #2\quad}}\bfseries#3}
\renewcommand{\tocsubsection}[3]{%
  \indentlabel{\@ifnotempty{#2}{\ignorespaces#1 #2\quad}}#3}

\newcommand\@dotsep{4.5}
\def\@tocline#1#2#3#4#5#6#7{\relax
  \ifnum #1>\c@tocdepth 
  \else
    \par \addpenalty\@secpenalty\addvspace{#2}%
    \begingroup \hyphenpenalty\@M
    \@ifempty{#4}{%
      \@tempdima\csname r@tocindent\number#1\endcsname\relax
    }{%
      \@tempdima#4\relax
    }%
    \parindent\z@ \leftskip#3\relax \advance\leftskip\@tempdima\relax
    \rightskip\@pnumwidth plus1em \parfillskip-\@pnumwidth
    #5\leavevmode\hskip-\@tempdima{#6}\nobreak
    \leaders\hbox{$\m@th\mkern \@dotsep mu\hbox{.}\mkern \@dotsep mu$}\hfill
    \nobreak
    \hbox to\@pnumwidth{\@tocpagenum{\ifnum#1=1\bfseries\fi#7}}\par
    \nobreak
    \endgroup
  \fi}
\AtBeginDocument{%
\expandafter\renewcommand\csname r@tocindent0\endcsname{0pt}
}
\makeatother

\usepackage{lipsum}

\DeclareUnicodeCharacter{2009}{\,}

\newtheorem{thm}{Theorem}[section]
\newtheorem{defn}[thm]{Definition}
\newtheorem{lem}[thm]{Lemma}
\newtheorem{cor}[thm]{Corollary}

\newtheorem{prop}[thm]{Proposition}
\newtheorem{oss}[thm]{Remark}

\usepackage{amsaddr}

\usepackage{abstract}

\begin{document}

\title[Non existence of closed and bounded null geodesics in Kerr spacetimes]{Non existence of closed and bounded null geodesics in Kerr spacetimes}


\author{Giulio Sanzeni}


\address{Ruhr-Universit\"at Bochum,  Fakult\"at f\"ur Mathematik,  Universit\"atsstra\ss e 150,  44801,  Bochum, Germany}
\email{ giulio.sanzeni@rub.de}

\maketitle

\thanks{\noindent \textbf{Funding.} This research is funded by the Deutsche Forschungsgemeinschaft (DFG, German Research Foundation) – Project-ID 281071066 – TRR 191.}


\vspace{0.5cm}

 \begin{abstract}
The Kerr-star spacetime is the extension over the horizons and in the negative radial region of the slowly rotating Kerr black hole.  It is known that below the inner horizon,  there exist both timelike and null (lightlike) closed curves.  Nevertheless,  we prove that null geodesics can be neither closed nor even contained in a compact subset of the Kerr-star spacetime. 
\end{abstract}

\setcounter{tocdepth}{2}
\tableofcontents
\setcounter{tocdepth}{3}

\section{\textbf{Introduction}}

\subsection{The Kerr black hole}

Kerr spacetimes are solutions of Einstein’s vacuum field equations found by R. P.  Kerr \cite{Kerr-paper} which are stationary,  axisymmetric and asymptotically flat.  The solutions depend on two parameters: $M$,  the \textit{mass parameter},  and $a$,  the \textit{rotation parameter} (angular momentum per unit mass).  The static spherically symmetric Schwarzschild solution \cite{Schw_paper} is obtained from the Kerr solution by setting $a$ to zero.  Slowly rotating ($|a|<M$) Kerr spacetimes model the gravitational field of a rotating black hole (BH),  at least sufficiently far away from it.  By conservation of the angular momentum,  any astrophysical body,  such as a black hole,  is supposed to spin in the universe.  The recent image of the supermassive black hole at the center of the galaxy M87 obtained by the Event Horizon Telescope Collaboration \cite{BH_first_picture},  which is the first direct detection of a BH, is consistent with the shadow \cite{Falcke_shadow} predicted using the Kerr model.  Another empirical evidence is the gravitational waves signal detected by the LIGO interferometers \cite{grav_waves_paper}: the decay of the waveform agrees with the damped oscillations of a BH relaxing to a final Kerr configuration.  For these reasons,  the Kerr solution is a physically meaningful spacetime model to analyse.  

However,  the natural analytic extension in the negative radial region of the slowly rotating Kerr solution exhibits causality violations for all non-zero values of $a$: both timelike and null closed curves are present.  This fact,  first noticed by Carter \cite{Carter_causality},  is in sharp contrast with the Schwarzschild solution for which the maximal analytic extension is globally hyperbolic \cite{Landsman_book}.  Whether or not causality violating spacetimes can be considered physically reasonable is an open problem.  It is expected though that closed causal (timelike or lightlike) geodesics raise more conceptual problems than closed causal curves,  since causal curves correspond to orbits of accelerated bodies while geodesics to orbits of light or free-falling massive particles.  For example,  G\"odel's rotating dust spacetime \cite{Godel} does not contain any closed causal geodesics \cite{Kundt,  Chandr_Wright,  Nolan_godel} and the acceleration of all closed causal curves in it must be huge \cite{Nolan_godel}.  Vacuum spacetimes with closed timelike curves but no closed null geodesics were constructed by Li \cite{Li} and Low \cite{Low_1995}.  The (non-)existence of closed null geodesics is also related to the \textit{chronology protection conjecture} stated by Hawking in \cite{ChronogyProtection}. 
Thus,  our result suggests that the extension of the Kerr solution is not entirely physically unreasonable.  A natural further step in this direction would be to investigate the (non-)existence of closed timelike geodesics in Kerr spacetimes,  which still remains an open problem.

\subsection{Results}\label{subsection Result}
Given a spacetime $\big(\mathcal{M},\mathbf{g}\big)$,  \textit{i.e.} a time-oriented connected Lorentzian manifold,  and a geodesic curve $\gamma:I=[a,b]\rightarrow \mathcal{M}$,  we say that $\gamma$ is a \textit{closed geodesic}  if $\gamma(a)=\gamma(b)$ and $\gamma'(a)=\lambda\gamma'(b)\neq 0$,  for some real number $\lambda\neq 0$.  $\gamma$ is called \textit{bounded} is its image is contained in a compact set of $\mathcal{M}$.

The purpose of this paper is to prove the non existence of closed and bounded null (lightlike) geodesics in the Kerr-star extension of the slowly rotating ($|a|<M$) Kerr black hole,  described in detail in \S \ref{definiton of Kerr}. 

\begin{thm}\label{main theorem}
Let $K^*$ be the Kerr-star spacetime.  Then there are no closed null (lightlike) geodesics in $K^*$.
\end{thm}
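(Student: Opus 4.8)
Write $\Delta=r^{2}-2Mr+a^{2}$, $\Sigma=r^{2}+a^{2}\cos^{2}\theta$, and $r_{\pm}=M\pm\sqrt{M^{2}-a^{2}}$, and recall the first integrals of the null geodesic flow: the energy $E=-\mathbf{g}(\partial_{t},\gamma')$, the axial angular momentum $L_{z}=\mathbf{g}(\partial_{\phi},\gamma')$, and the Carter constant, taken as $\mathcal{K}=\mathcal{K}_{\mu\nu}\dot x^{\mu}\dot x^{\nu}$ with $\mathcal{K}_{\mu\nu}$ the Killing tensor. These yield the separated equations $\Sigma^{2}\dot r^{2}=R(r)$, $\Sigma^{2}\dot\theta^{2}=\Theta(\theta)$ with $R(r)=P(r)^{2}-\Delta\,\mathcal{K}$, $P(r)=E(r^{2}+a^{2})-aL_{z}$, and $\mathcal{K}=\Theta(\theta)+(aE\sin\theta-L_{z}/\sin\theta)^{2}$; in particular $\mathcal{K}\ge0$ along any null geodesic. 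The plan is to combine these first integrals with the causal geometry of the three blocks $\{r>r_{+}\}$, $\{r_{-}<r<r_{+}\}$, $\{r<r_{-}\}$ into which the horizons divide $K^{*}$.

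\emph{Step 1 (reduction to the genuinely periodic case).} Let $\gamma:[a,b]\to K^{*}$ be affinely parametrised with $\gamma(a)=\gamma(b)$ and $\gamma'(a)=\lambda\gamma'(b)$. Evaluating $E$, $L_{z}$ and $\mathcal{K}$ at the two endpoints gives $(\lambda-1)E=(\lambda-1)L_{z}=(\lambda^{2}-1)\mathcal{K}=0$, and $\lambda=-1$ is impossible, since then $\gamma'$ would vanish at the midpoint. Thus either $\lambda=1$, treated below, or $\lambda\neq\pm1$ and $E=L_{z}=\mathcal{Q}=0$; in the latter case $R\equiv0$ and $\Theta\equiv0$ force $r$ and $\theta$ constant, while $E=L_{z}=0$ forces $\gamma'$ into the radical of $\mathbf{g}$ restricted to $\operatorname{span}(\partial_{t},\partial_{\phi})$, whose determinant is $-\Delta\sin^{2}\theta$; hence $\Delta=0$ and $\gamma$ is a reparametrised generator of a Killing horizon, along which $t^{*}$ is strictly increasing, contradicting closedness.

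\emph{Step 2 (localisation; the exterior block).} In $\{r_{-}<r<r_{+}\}$ one has $g^{rr}=\Delta/\Sigma<0$, so $r$ is a time function there and a causal curve that leaves this block cannot re‑enter it; since $\{r\le r_{-}\}$ and $\{r\ge r_{+}\}$ are disjoint closed sets, a connected set meeting both must meet $\{r_{-}<r<r_{+}\}$, and these two facts together with closedness force $\operatorname{Im}\gamma$ into a single block $\{r>r_{+}\}$ or $\{r<r_{-}\}$ (and not onto a horizon: a null geodesic tangent at a point to a null hypersurface coincides with the generator through that point, which is not closed). On $\{r>r_{+}\}$ one has $r>0$ and, using $\sin^{2}\theta\le1$,
\[
(r^{2}+a^{2})^{2}-a^{2}\Delta\sin^{2}\theta\ \ge\ (r^{2}+a^{2})^{2}-a^{2}\Delta\ =\ r\,\big(r^{3}+a^{2}r+2Ma^{2}\big)\ >\ 0,
\]
so $g^{tt}=-\big[(r^{2}+a^{2})^{2}-a^{2}\Delta\sin^{2}\theta\big]/(\Sigma\Delta)<0$: Boyer–Lindquist time is a time function on $\{r>r_{+}\}$, which therefore admits no closed causal curve, and in particular no closed null geodesic.

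\emph{Step 3 (the interior block; the main obstacle).} It remains to exclude a periodic null geodesic with image in $\{r<r_{-}\}$, where $\Delta>0$. There $R=P^{2}-\Delta\mathcal{K}\le P^{2}$, the radial momentum $p_{r}=(P+\varepsilon_{r}\sqrt{R})/\Delta$ ($\varepsilon_{r}=\pm1$) satisfies $P\,p_{r}\ge0$, and one checks $E\neq0$ (if $E=0$ the quartic $R$ degenerates to $-\mathcal{K}(r-M)^{2}+\mathcal{K}M^{2}-a^{2}\mathcal{Q}$, whose positivity set, when nonempty, is an interval about $r=M$ that necessarily reaches $r=r_{-}$, so the orbit cannot stay in $\{r<r_{-}\}$). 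In Mino time $d\sigma=d\lambda/\Sigma$ the radial and polar motions decouple, and for a closed orbit their libration periods are commensurable. Assuming $\gamma$ closed, the identity $\oint p_{\mu}\dot x^{\mu}\,d\lambda=\oint\mathbf{g}(\gamma',\gamma')\,d\lambda=0$, together with $\oint p_{r}\,dr=\int_{r_{1}}^{r_{2}}\tfrac{2\sqrt{R}}{\Delta}\,dr\ge0$ and $\oint p_{\theta}\,d\theta\ge0$ over one libration (here $\Delta>0$ is essential), gives $L_{z}\oint d\phi^{*}\le0$; and the closing condition $\oint\dot t^{*}\,d\lambda=0$, with $\Sigma\dot t^{*}=(r^{2}+a^{2})p_{r}+aL_{z}-a^{2}E\sin^{2}\theta$, reduces in quadratures to
\[
\int_{r_{1}}^{r_{2}}\frac{E(r^{2}+a^{2})\big(r^{2}+a^{2}(1-\eta)\big)+2Mar\,(aE\eta-L_{z})}{\Delta(r)\,\sqrt{R(r)}}\,dr\ =\ 0
\]
for a suitable weighted average $\eta\in[0,1]$ of $\sin^{2}\theta$ over the polar libration (in the degenerate case $r\equiv r_{0}$ the numerator $N$ reads simply $N(r_{0})=0$). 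The hard part, which I expect to require a genuinely Kerr‑specific analysis, is to show that under the constraints — the location $r_{1}<r_{2}<r_{-}$ of the radial turning points, the sign relations $\mathcal{K}\ge0$ and $P\,p_{r}\ge0$, and $E\neq0$ — the numerator $N$ cannot change sign on $[r_{1},r_{2}]$ (nor vanish at a spherical radius), so that the displayed identity is impossible. No causality argument alone can replace this: in $\{r<r_{-}\}$ no combination of $\partial_{t},\partial_{\phi},\partial_{r}$ is globally timelike — indeed $g^{t^{*}t^{*}}=a^{2}\sin^{2}\theta/\Sigma\ge0$, so $t^{*}$ is never a time function — and this block genuinely contains closed causal (non‑geodesic) curves; the obstruction must come from the precise shape of the radial potential and its coupling, through the $t^{*}$‑quadrature, to the polar motion. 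Once this is done, Steps 1–3 prove the theorem.
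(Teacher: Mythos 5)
Your Steps 1 and 2 are fine and parallel the paper's localisation (causality of the region $\{r\ge r_-\}$, monotonicity of $r$ in the intermediate block, non-closedness of the horizon generators), but Step 3 contains a genuine gap, and it sits exactly where the theorem's real content lies. You reduce the remaining case $\{r<r_-\}$ to the claim that the numerator $N$ in your $t^*$-quadrature ``cannot change sign on $[r_1,r_2]$ (nor vanish at a spherical radius)'' and then explicitly defer it (``the hard part, which I expect to require a genuinely Kerr-specific analysis''). That deferred claim is the theorem; without it Steps 1--3 prove nothing new. The paper closes this case in two stages. First, a polynomial analysis of $R(r)$ shows that inside $\{r<r_-\}$ no genuine radial libration $r_1<r_2<r_-$ at negative radii can occur at all: for $Q<0$, $R$ is convex with at most two negative roots, so $R<0$ \emph{between} them and the motion region lies outside; for $Q>0$ Descartes' rule gives a single negative root; for $Q=0$ convexity and $R(0)=0$ do the same job; and any bounded radial range in $\{0<r<r_-\}$ is excluded because the slices $\{t=\mathrm{const}\}$ are spacelike there (so your assertion that ``no causality argument alone can replace this'' is only half-true -- causality does kill the positive-$r$ part of block III). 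Hence everything reduces to spherical orbits $r\equiv\mathrm{const}<0$ with $Q<0$. Second, for those orbits the paper computes the increment of $t$ over a full $\theta$-oscillation in closed form, $\Delta t=\frac{2B(r)}{|a|\sqrt{u_-}}\,\mathcal{K}\bigl(1-\tfrac{u_+}{u_-}\bigr)+2|a|\sqrt{u_-}\,\mathcal{E}\bigl(1-\tfrac{u_+}{u_-}\bigr)$, and proves $\Delta t>0$ by a Pfaff transformation of $\mathcal{K}$ together with the comparison $\mathcal{E}(x)>\mathcal{K}\bigl(\tfrac{x}{x-1}\bigr)$ and the prefactor inequality $-a^2\mathcal{Q}(r)>B^2(r)$, which after substituting the spherical-orbit relations $\Phi(r),\mathcal{Q}(r)$ reduces to $(-12Mr^2-4a^2M)r>0$ for $r<0$. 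Some computation of this kind (or a substitute argument) is indispensable; asserting that $N$ has a sign is not a proof.

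Two smaller points. Because you never carry out the reduction to constant $r$, your unproven claim is stated for configurations ($r_1<r_2<r_-$ with actual libration) that the paper shows cannot arise, so you have made the missing step strictly harder than necessary. And your parenthetical disposal of $E=0$ in block III is too quick in the degenerate case $K(0,L,Q)=0$, $L\neq0$: there $R\equiv a^2L^2>0$ is a positive constant, so the positivity set is all of $\mathbb{R}$ and ``the orbit cannot stay in $\{r<r_-\}$'' is not the right conclusion; what rules out closedness is that $r$ (or, in the paper's version with $R\equiv0$, the coordinate $t$) is strictly monotone along the geodesic.
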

Combining the proof of Thm.  \ref{main theorem} with additional geometrical arguments,  we are able to prove a more general result about null geodesics in the Kerr-star spacetime:

\begin{thm}\label{second theorem}
Let $K^*$ be the Kerr-star spacetime.  Then there are no bounded null (lightlike) geodesics in $K^*$.
\end{thm}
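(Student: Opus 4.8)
The plan is to prove the contrapositive: a null geodesic $\gamma$ of $K^{*}$ with precompact image forces the star time coordinate $t^{*}$ to be unbounded along $\gamma$.

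First I would upgrade $\gamma$ to a complete geodesic. Any compact set carrying $\gamma$ stays at positive distance from the ring singularity, so there both $\mathbf{g}$ and the first integrals remain non-degenerate; writing $\gamma'$ out of Carter's first-order system in the star chart $(t^{*},r,\theta,\phi^{*})$ --- that is, in terms of the position and of the conserved quantities $E=-\mathbf{g}(\gamma',\partial_{t})$, $L=\mathbf{g}(\gamma',\partial_{\phi})$ and the Carter constant $Q$ --- bounds $\gamma'$, so $(\gamma,\gamma')$ stays in a compact subset of $TK^{*}$ and $\gamma$ extends to $\gamma:\mathbb{R}\to K^{*}$; in particular $r$ and $t^{*}$ stay bounded along it. I then record, with $\Sigma=r^{2}+a^{2}\cos^{2}\theta$ and $\Delta=r^{2}-2Mr+a^{2}$, the separated equations $\Sigma^{2}\dot r^{2}=R(r):=[E(r^{2}+a^{2})-aL]^{2}-\Delta\,[(L-aE)^{2}+Q]$ and $\Sigma^{2}\dot\theta^{2}=\Theta(\theta):=Q+\cos^{2}\theta\,(a^{2}E^{2}-L^{2}/\sin^{2}\theta)$, together with the horizon-regular identity $\Sigma\,\dot t^{*}=T_{r}(r)+T_{\theta}(\theta)$, where $T_{r}(r)=\dfrac{(r^{2}+a^{2})P(r)+\varepsilon_{r}\,2Mr\sqrt{R(r)}}{\Delta}$, $T_{\theta}(\theta)=aL-a^{2}E\sin^{2}\theta$, $P(r)=E(r^{2}+a^{2})-aL$ and $\varepsilon_{r}=\sgn\dot r$. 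Passing to Mino time $d\mu=d\lambda/\Sigma$ turns $r(\mu)$ and $\theta(\mu)$ into one-dimensional motions in the real-analytic potentials $R$ and $\Theta$, so each of them is either eventually strictly monotone with a finite limit (a multiple root of the potential, i.e. a spherical photon orbit), or periodic, or constant; hence the limit set of $\gamma$ in the $(r,\theta)$-plane is a point, a closed curve, or an invariant torus, and $t^{*}$ grows along $\gamma$ like the $\mu$-average of $T_{r}+T_{\theta}$.

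The core point is that this average drift cannot vanish. A direct computation gives $\mathbf{g}(\nabla t^{*},\nabla t^{*})=-\big(1+\tfrac{2Mr}{\Sigma}\big)$, so $\nabla t^{*}$ is timelike precisely off the set $\mathcal{R}_{0}:=\{(r+M)^{2}+a^{2}\cos^{2}\theta<M^{2}\}$, which lies in $\{r<0\}$ and hence in the Boyer--Lindquist block below the inner horizon. Off $\overline{\mathcal{R}_{0}}$ the timelike vector field $\nabla t^{*}$ has a definite time-orientation on each connected component, so $\dot t^{*}=\mathbf{g}(\gamma',\nabla t^{*})$ has a fixed, strict sign along the causal geodesic $\gamma$ on each such component --- and on the component carrying the asymptotic end $r\to+\infty$ this sign is the causal (positive, say) one. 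Therefore $t^{*}\circ\gamma$ can fail to be monotone only while $\gamma$ stays in $\overline{\mathcal{R}_{0}}$ or in the bounded inner region it cuts off; since $t^{*}\circ\gamma$ is bounded, the limit set of $\gamma$ must meet $\{r<0\}$.

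It remains to rule out this residual possibility with the explicit potentials. On $\{r\le 0\}$ one has $\Delta>0$ (so the inner block is a single region, not split by horizons) and, for instance, $R(0)=-a^{2}Q$; imposing $R\ge 0$ and $\Theta\ge 0$ near $\mathcal{R}_{0}$ forces sign conditions on $(E,L,Q)$ (schematically $Q\le 0$ and $a^{2}E^{2}\sin^{2}\theta>L^{2}$ on the realized $\theta$-range) under which $R$ (or $\Theta$) is positive throughout the relevant part of $\mathcal{R}_{0}$, so $\dot r$ (or $\dot\theta$) is bounded away from $0$ and $\gamma$ escapes $\mathcal{R}_{0}$ in bounded Mino time. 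Averaging $dt^{*}/d\mu=T_{r}(r)+T_{\theta}(\theta)$ over the --- now periodic or toral --- limit motion, and noting that $\varepsilon_{r}\,2Mr\sqrt{R}/\Delta$ is an exact $r$-derivative with zero period-average, the drift collapses to $n_{r}\!\oint\!\tfrac{(r^{2}+a^{2})P(r)}{\Delta\sqrt{R}}\,dr+n_{\theta}\!\oint\!\tfrac{aL-a^{2}E\sin^{2}\theta}{\sqrt{\Theta}}\,d\theta$ with integer winding numbers $n_{r},n_{\theta}\ge 1$; I would then show that the two integrals carry the same nonzero sign, so their combination cannot vanish and $t^{*}\to\pm\infty$ along $\gamma$. (If $\gamma$ instead approaches or lies on a spherical photon orbit, a periodic motion in $(\theta,\phi^{*})$ on it would make $\gamma$ closed up to a nonzero translation of $t^{*}$, hence unbounded, a non-periodic one is covered by the same averaged inequality, and the borderline case of zero $t^{*}$-translation is exactly Theorem~\ref{main theorem}.) This contradicts boundedness, proving Theorem~\ref{second theorem}. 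I expect the genuine obstacle to be precisely this last step: on $\mathcal{R}_{0}$ the coordinate $t^{*}$ is no longer a time function, so the clean monotonicity argument breaks down and one has to squeeze a strict, uniform lower bound for the averaged $t^{*}$-drift out of the Carter radial and polar potentials.
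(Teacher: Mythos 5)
Your argument hinges on the claim that $\nabla t^*$ is timelike away from the region $\mathcal{R}_0=\{(r+M)^2+a^2\cos^2\theta<M^2\}$, and that claim is false. Computing the inverse of the Kerr-star metric of Lemma \ref{Kerr-star metric} (using $\det \mathbf{g}=-\rho^4\sin^2\theta$) gives
\begin{equation*}
\mathbf{g}(\nabla t^*,\nabla t^*)=g^{t^*t^*}=\frac{a^2\sin^2\theta}{\rho^2(r,\theta)}\;\geq\;0 ,
\end{equation*}
so $\nabla t^*$ is spacelike off the axis and null on it, \emph{everywhere} in $K^*$ — this is the usual Eddington--Finkelstein phenomenon (already for $a=0$ the gradient of advanced time is null, never timelike, so your formula $-(1+2Mr/\Sigma)$ cannot be right). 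Consequently the level sets of $t^*$ are timelike hypersurfaces, $t^*$ is not a time function outside $\overline{\mathcal{R}_0}$, and the step "$t^*\circ\gamma$ can fail to be monotone only inside $\overline{\mathcal{R}_0}$" collapses; with it goes the reduction of the problem to $\{r<0\}$. The paper instead uses the Boyer--Lindquist function $t$, whose level sets are spacelike only in $\{0<r<r_-\}$ and $\{r>r_+\}$, and even there strict monotonicity of $t$ by itself does not contradict boundedness (a bounded geodesic could a priori have $t$ increasing to a finite limit): this is why Prop.\ \ref{no bounded geodesics with foliation} runs a compactness argument producing a limiting nonzero null vector tangent to a spacelike slice, which in turn needs a non-vanishing constant of motion, and why horizon-approaching and incomplete geodesics are handled separately in Prop.\ \ref{prop unbounded geodesics approaching horizons} and Cor.\ \ref{incomplete are unbounded} rather than by the quick "the first integrals bound $\gamma'$" remark you make (near $\mathscr{H}$ one of the two root branches of $\dot t^*$ is singular, so this needs care).

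The second gap is the one you flag yourself: the non-vanishing of the averaged $t$-drift for the residual spherical orbits ($r=\mathrm{const}<0$, $Q<0$) is precisely the hard computation of \ref{case Q<0}, and your sketched route asserts the wrong sign structure. The radial contribution carries the prefactor $B(r)=r^2(3M+r)/(r-M)$ of \eqref{B(r) expression}, which is negative for every admissible radius (see Appendix \ref{appendix motivation}), while the angular contribution is positive; so the two integrals in your averaged drift have \emph{opposite} signs, and the whole difficulty is to show the positive one dominates. The paper does this by expressing the two terms as complete elliptic integrals, applying the Pfaff transformation \eqref{transformation k}, and comparing both the integrals \eqref{inequality elliptic integrals} and their prefactors \eqref{ineq prefactors}, the latter reducing to the algebraic inequality $-a^2\mathcal{Q}(r)>B^2(r)$ for $r<0$ with $\mathcal{Q}(r)$ from \eqref{class r=const}. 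Without this estimate (and without the correct sign analysis preceding it), the proposal stops exactly where the proof of Theorem \ref{second theorem} actually begins.
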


\subsection{Space of null geodesics}

The original motivation which led us to investigate the existence of closed null geodesics in the Kerr spacetime is the study of the space of null (lightlike) geodesics of this spacetime.  
 Penrose was the first to suggest the importance of the study of the space of null geodesics \cite{Penrose_twistor,  Penrose_book}.  First results in this context were obtained by Low \cite{Low_1989}.  He proved that if the spacetime is globally hyperbolic (we refer to \cite{Beem_book} for causal theory definitions),  then its space of null geodesics is contactomorphic to the spherical cotangent bundle of any Cauchy hypersurface of the spacetime,  as explained in \cite{LOW_spherical_cotangent,Nemirovski_legendrian}.  For this reason,  in the case of globally hyperbolic spacetimes,  causality can be described in terms of the geometry of the emerging contact manifold \cite{Nemirovski_2, Chernov_2016}.  In the case of causally simple spacetimes,  a sufficient condition to get a smooth contact manifold for the space of null geodesics  is the existence of a conformal open embedding of the spacetime into a globally hyperbolic one \cite{Hedicke_Suhr}.   Furthermore,  in \cite{Low_1989} it is proven that if $\mathcal{M}$ is a strongly causal spacetime,  then its space of null geodesics inherits a smooth structure from the cotangent bundle of $\mathcal{M}$,  but is not necessarily Hausdorff.  If the spacetime is not causal,  like Kerr,  there are no general results about its space of null geodesics,  except for Zollfrei spacetimes \cite{Zollfrei_book,SUHR_2013},  in which all the null geodesics are closed and the space of null geodesics is well understood.  From the study of null geodesic orbits  we hope to obtain insights into the structure of the space of null geodesics  of Kerr spacetimes.

\subsection{Geodesic motion in Kerr spacetimes}

Thanks to the existence of three obvious constants of motion (the energy,  associated to the timelike Killing vector field,  the angular momentum,  associated to the spacelike Killing vector field,  and  the Lorentzian energy of the geodesic),  geodesic motion can be solved in some special submanifolds,  since in such case three constants of motion are sufficient to completely integrate the system.  First Boyer and Price 
\cite{Boyer_Price_1965},  then Boyer and Lindquist 
\cite{Boyer-Lindquist_paper},  and then de Felice 
\cite{deFelice_1968} studied geodesic motion in the equatorial hyperplane $Eq=\{\theta=\pi/2\}$ in Kerr spacetime 
\cite{Kerr-paper}.  For the same reason,  Carter 
\cite{Carter_1966_Axis} was able to study orbits in the symmetry axis $A=\{\theta=0,\pi\}$.  Bounded orbits,  namely geodesics which run over a finite radial interval,  were studied by Wilkins 
\cite{Wilkins}.  After the maximal analytic extension of the Kerr metric by Boyer and Lindquist 
\cite{Boyer-Lindquist_paper},  Carter 
\cite{Carter_causality}  found a fourth constant of motion,  the Carter constant,  which completed the integrability of the geodesic flow (see for instance \cite{frolov_book}) and allowed the study of geodesics in full generality.  The most extensive and complete references about geodesic motion in Kerr spacetimes are probably the text-books by Chandrasekhar \cite{Chandrasekhar} and O'Neill \cite{KBH_book}.


\subsection{Organization of the paper}

In \S \ref{definiton of Kerr},  we introduce the Kerr metric and  discuss the definition and properties of the Kerr-star spacetime.  In \S \ref{study of geodesic equations} we recall the set of first order differential equations satisfied by geodesic orbits.  In \S \ref{section: properites of null geodesics},  we study the properties of null geodesics required to prove the main theorem.  In \S\ref{section: main theorem},  we give the proof of Thm.  \ref{main theorem} split into several cases.  The overall structure of the proof is detailed in \ref{strategy of the proof},  \ref{steps of other cases} and Fig.  \ref{figure steps of proof}.  In \S \ref{section unbounded},  we show that the null geodesics approaching a horizon and the incomplete ones cannot be bounded respectively in Prop.  \ref{prop unbounded geodesics approaching horizons} and in Cor.  \ref{incomplete are unbounded}. In \S \ref{proof second theorem} we give the proof of Thm.  \ref{second theorem}.

\vspace{0.5cm}

\normalsize {\thanks{ \noindent \textbf{Acknownledgments.} I would like to thank my PhD supervisors S.  Nemirovski and S.  Suhr for many fruitful discussions and precious advices.  I am also grateful to Liang Jin for the interesting and helpful conversations. }

\section{\textbf{ The Kerr-star spacetime}}\label{definiton of Kerr}

Consider $\mathbb{R}^2\times S^2$ with coordinates $(t,r)\in\mathbb{R}^2$ and $(\theta,\phi)\in S^2$.  Fix two real numbers $a\in\mathbb{R}\setminus \{0\}$,  $M\in \mathbb{R}_{>0}$  and define the functions 
\[
 \rho(r,\theta):= \sqrt{r^2+a^2\cos^2\theta}
\]
and 
\[
\Delta(r):=r^2-2Mr+a^2.
\]

We study the case $|a|<M$ called \textit{slow Kerr},  for which $\Delta(r)$ has two positive roots
\begin{align*}
r_{\pm}=M\pm \sqrt{M^2-a^2}>0
\end{align*}
and define two sets 
\begin{itemize}
\item[(1)] the \textit{horizons} $\mathscr{H}:=\{\Delta(r)=0\}=\{r=r_{\pm}\}:=\mathscr{H}_{-}\,\sqcup \mathscr{H}_{+}$, 
\item[(2)] the \textit{ring singularity} $\Sigma:=\{\rho(r,\theta)=0\}=\{r=0,\;\theta=\pi/2\}$.
\end{itemize}

The {\it Kerr metric}  \cite{Kerr-paper} in {\it Boyer--Lindquist coordinates} is

\begin{align}\label{kerr metric}
\mathbf{g}=-dt\otimes dt + \frac{2Mr}{\rho^2(r,\theta)}(dt-a\sin^2\theta\; d\phi)^2 + \frac{\rho^2(r,\theta)}{\Delta(r)}dr\otimes dr + a^2\sin^4(\theta) d\phi\otimes d\phi  + \rho^2(r,\theta) d\sigma^2  ,
\end{align}
where $d\sigma^2=d\theta\otimes d\theta + \sin^2\theta d\phi\otimes d\phi$ is the $2$-dimensional  (Riemannian) metric of constant unit curvature on the unit sphere $S^2\subset\mathbb{R}^3$ written in spherical coordinates. 

\begin{oss}
The components of $\mathbf{g}$ in Boyer--Lindquist coordinates can be read off the common expression

\begin{align}
\mathbf{g}=-&\bigg(1-\frac{2 M r}{\rho^2(r,\theta)} \bigg)\:dt\otimes dt-\frac{4Mar\sin^2\theta}{\rho^2(r,\theta)}\: dt\otimes d\phi + \nonumber\\+ &\bigg(r^2+a^2+\frac{2Mra^2\sin^2\theta}{\rho^2(r,\theta)} \bigg)\sin^2\theta\:  d\phi\otimes d\phi + \frac{\rho^2(r,\theta)}{\Delta(r)}\: dr\otimes dr + \rho^2(r,\theta)\: d\theta\otimes d\theta.
\end{align}

Nevertheless this last expression does not cover the subsets $\{\theta=0,\pi\}$.

\end{oss}

\begin{lem}
The metric \eqref{kerr metric} is a Lorentzian metric on $\mathbb{R}^2\times S^2\setminus (\Sigma\,\cup \mathscr{H})$.
\end{lem}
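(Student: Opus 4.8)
The plan is to verify the three defining properties of a Lorentzian metric — smoothness, symmetry, and pointwise nondegeneracy with signature $(-,+,+,+)$ — on the open set $\mathcal{U}:=(\mathbb{R}^2\times S^2)\setminus(\Sigma\cup\mathscr{H})$. Symmetry is immediate, since \eqref{kerr metric} is manifestly a sum of symmetric tensor products, so all the content is in smoothness and nondegeneracy, and the one genuinely delicate region will be the rotation axis $\{\theta=0,\pi\}$, where the Boyer--Lindquist coordinates break down.

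For smoothness I would split $\mathcal{U}$ into $\{\sin\theta\neq 0\}$ and a neighbourhood of the axis. On $\{\sin\theta\neq 0\}$ the quadruple $(t,r,\theta,\phi)$ is an honest coordinate system, and the components read off the common expression in the Remark are rational functions of $r,\sin\theta,\cos\theta$ with denominators $\rho^2$ and $\Delta$, both nowhere vanishing on $\mathcal{U}$ — hence smooth. Near a pole of $S^2$ one must instead use the form \eqref{kerr metric}: there $\rho^2=r^2+a^2\cos^2\theta>0$ and $2Mr/\rho^2$ are smooth, $d\sigma^2$ is the smooth round metric on $S^2$, and $\sin^2\theta\,d\phi$ extends across the pole to a smooth $1$-form on $S^2$ (in a Cartesian chart on $S^2\subset\mathbb{R}^3$ it agrees to leading order with $x\,dy-y\,dx$). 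Consequently $(dt-a\sin^2\theta\,d\phi)^{\otimes 2}$ and $a^2\sin^4\theta\,d\phi\otimes d\phi=(a\sin^2\theta\,d\phi)^{\otimes 2}$ are smooth, so \eqref{kerr metric} is a smooth symmetric $(0,2)$-tensor field on all of $\mathcal{U}$.

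For the signature I would use that, in the chart $\{\sin\theta\neq 0\}$, $\mathbf{g}$ is block diagonal for the splitting $\operatorname{span}(\partial_t,\partial_\phi)\oplus\operatorname{span}(\partial_r,\partial_\theta)$. The $(r,\theta)$-block is $\operatorname{diag}\!\big(\rho^2/\Delta,\ \rho^2\big)$ with determinant $\rho^4/\Delta$, and a short computation shows the $(t,\phi)$-block has determinant $-\Delta\sin^2\theta$, so $\det\mathbf{g}=-\rho^4\sin^2\theta<0$ on this chart. To obtain the signature and not merely nondegeneracy, I distinguish the sign of $\Delta$: where $\Delta>0$ the $(r,\theta)$-block is positive definite and the $(t,\phi)$-block, having negative determinant, has signature $(-,+)$; where $\Delta<0$ the $(r,\theta)$-block has signature $(-,+)$ while the $(t,\phi)$-block has positive determinant and, since $g_{tt}=-(\Delta-a^2\sin^2\theta)/\rho^2>0$ there, is positive definite. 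Either way the signature is $(-,+,+,+)$.

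Finally, a direct evaluation of \eqref{kerr metric} at a point with $\theta\in\{0,\pi\}$, where $\rho^2=r^2+a^2$ and the $1$-form $a\sin^2\theta\,d\phi$ vanishes, gives
\[
\mathbf{g}=-\frac{\Delta}{r^2+a^2}\,dt\otimes dt+\frac{r^2+a^2}{\Delta}\,dr\otimes dr+(r^2+a^2)\,d\sigma^2,
\]
which is block diagonal between the $\mathbb{R}^2$-factor and $T_pS^2$. Since $a\neq0$ forces $r^2+a^2>0$, the block $(r^2+a^2)\,d\sigma^2$ is positive definite, while the $\mathbb{R}^2$-block is diagonal with the product of its entries equal to $-1<0$, hence Lorentzian; so the signature is again $(-,+,+,+)$. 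I expect the axis to be the main obstacle: it is the only place where one cannot argue directly in Boyer--Lindquist coordinates and must fall back on the global form \eqref{kerr metric} and the smoothness of $\sin^2\theta\,d\phi$ on $S^2$; the sign bookkeeping needed to pin down the signature (rather than just $\det\mathbf{g}\neq0$) in the region $r_-<r<r_+$ is the only other point requiring care.
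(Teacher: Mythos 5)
Your proof is correct and complete; the paper itself states this lemma without proof (it is the standard verification, cf.\ O'Neill's treatment of the Kerr metric), so there is no argument in the text to compare against. Your two key points --- that smoothness across the axis reduces to the smooth extension of $\sin^2\theta\,d\phi$ as the restriction of $x\,dy-y\,dx$ to $S^2$ (it agrees with it exactly, not merely to leading order), and that the signature follows from the block decomposition $\operatorname{span}(\partial_t,\partial_\phi)\oplus\operatorname{span}(\partial_r,\partial_\theta)$ with $\det$ of the $(t,\phi)$-block equal to $-\Delta\sin^2\theta$ and a case split on the sign of $\Delta$ --- are exactly the right ones, and your identity $g_{tt}=-(\Delta-a^2\sin^2\theta)/\rho^2$ used to pin down positive definiteness of the $(t,\phi)$-block in region II checks out.
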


The sets on which the Boyer--Lindquist coordinates or the metric tensor fail are:

\begin{itemize}
\item the \textit{horizons} $\mathscr{H}=\{\Delta(r)=0\}=\{r=r_{\pm}\}=\mathscr{H}_{-}\sqcup\mathscr{H}_{+}$;
\item the \textit{ring singularity} $\Sigma=\{\rho(r,\theta)=0\}=\{r=0,\;\theta=\pi/2\}$.
\end{itemize}

In order to extend the metric tensor to the horizons,  one has to introduce a new set of coordinates.  No change of coordinates can be found in order to extend the metric across the ring singularity.  For a detailed study of the nature of the ring singularity,  see for instance \cite{Chrusc_singularity}.

\begin{defn}
The subsets 
\[
\textrm{I}:=\{r>r_{+}\},\;  \textrm{II}:=\{r_{-}<r<r_{+}\},\; \textrm{III}:=\{r<r_{-}\}\subset \{(t,r)\in\mathbb{R}^2,\; (\theta,\phi)\in S^2 \}\setminus (\Sigma\,\cup \mathscr{H})
\] 
are called the Boyer--Lindquist (BL) blocks.
\end{defn}

\begin{oss}
The BL blocks I,  II and III are the connected components of $\mathbb{R}^2\times S^2\setminus (\Sigma\,\cup \mathscr{H})$.  Each block with the restriction of the metric tensor \eqref{kerr metric} is a connected Lorentzian $4$-manifold.  To get spacetimes,  one has to choose a time orientation on each block.
\end{oss}

\begin{figure}[H]
\centering
\includegraphics[scale=0.4]{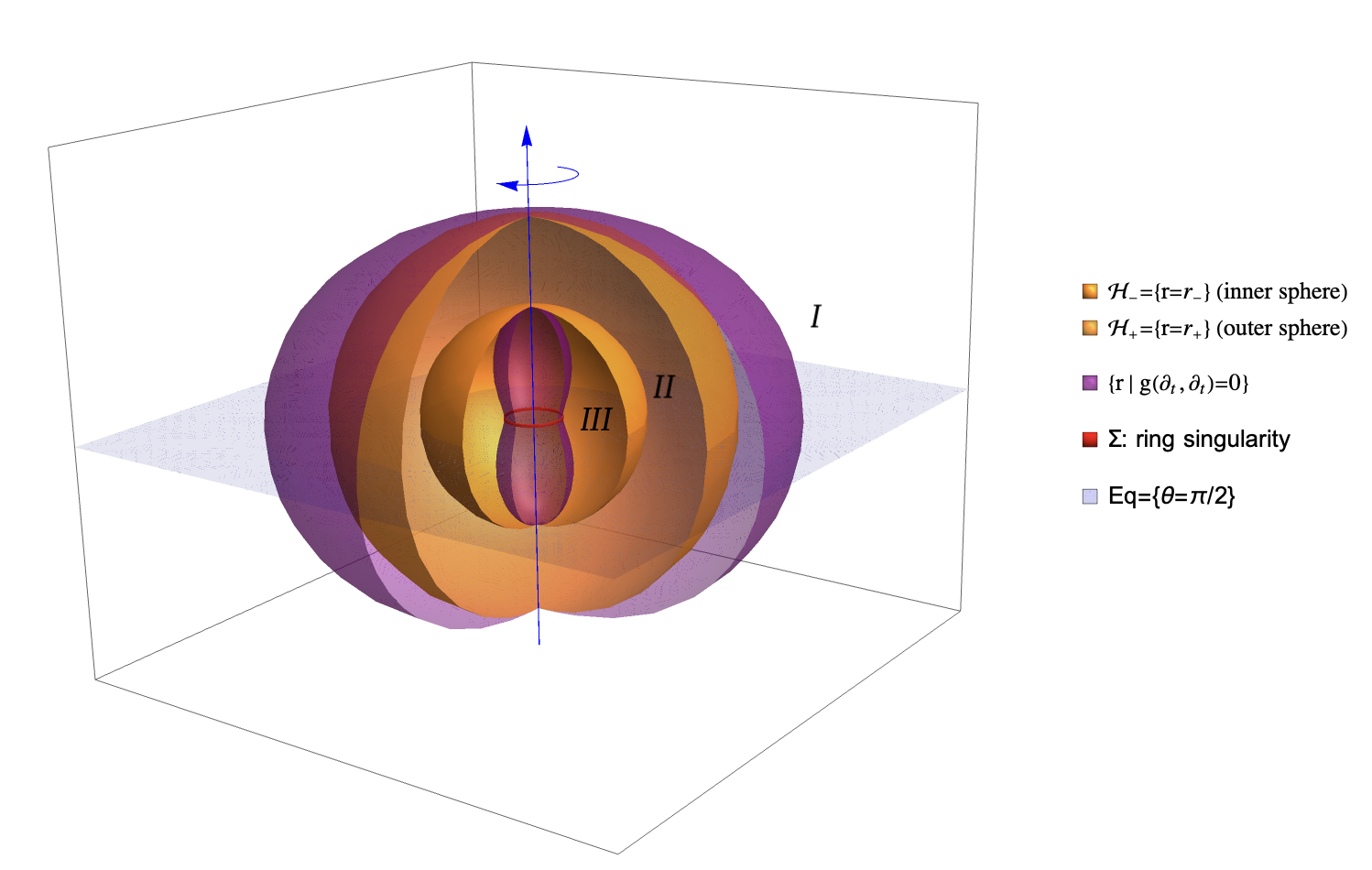} 
\caption{This picture shows a $t$-slice $\{t\}\times\mathbb{R} \times S^2$,  with the radius drawn as $e^r$,  so that $r=-\infty$ is at the center of the figure.  The \textit{Ergoregion} $\{\mathbf{g}(\partial_t,\partial_t)>0\}$ (at fixed time $t$) is  the region between the purple ellipsoids in which $\partial_t$ becomes spacelike.}
\vspace*{-5mm}
\end{figure}

\vspace{1cm}

\subsection{Time orientation of BL blocks }

We define a future time-orientation of block  I using the gradient timelike vector field $-\nabla t$.  Indeed,  the hypersurfaces $\{t=\textrm{const}\}$ are spacelike in block I.  Notice that the coordinate vector field $\partial_t$ is timelike future-directed for $r\gg r_+$ on block I, since $\mathbf{g}(-\nabla t,\partial_t)=-1$.\\

We define a time-orientation of block II by declaring the vector field $-\partial_r$,  which is timelike in II,  to be future-oriented.\\

We define a time-orientation of block III by declaring the vector field $V:=(r^2+a^2)\partial_t + a\partial_\phi$,  which is timelike in III,  to be future-oriented.\\

With this choice of time orientations,  each block is a \textit{spacetime},  \textit{i.e.} a connected time-oriented Lorentzian $4$-manifold.

\subsection{Kerr spacetimes}

\begin{defn}\label{definition kerr spacetime}
A {\it Kerr spacetime} is an analytic spacetime $(K,g_K)$ such that
\begin{enumerate}
\item[(1)]  there exists a family of open disjoint isometric embeddings $\Phi_i \colon \mathcal{B}_i\hookrightarrow K$ $(i\in \mathbb{N})$ of BL blocks $(\mathcal{B}_i,
\mathbf{g}_K|_{\mathcal{B}_i})$, such that $\cup_{i\in\mathbb{N}} \Phi_i(\mathcal{B}_i)$ is dense in $K$;

\item[(2)] there are analytic functions $r$ and $C$ on K  such that their restriction on each $\Phi_i(\mathcal{B}_i)$ of condition $(1)$ is $\Phi_i$-related to the Boyer--Lindquist functions $r$ and $C=\cos\theta$ on $\mathcal{B}_i$;

\item[(3)]  there is an isometry $\epsilon: K\rightarrow K$ called the \textit{equatorial isometry} whose restrictions to each BL block sends $\theta$ to $\pi-\theta$,  leaving the  other coordinates unchanged;

\item[(4)] there are Killing vector fields $\tilde{\partial_t}$ and $\tilde{\partial_\phi}$ on K that restrict  to the Boyer--Lindquist coordinate vector fields $\partial_t$ and $\partial_\phi$ on each BL block.
\end{enumerate}
\end{defn}

\begin{oss}
With abuse of notation,  we identify each block $\mathcal{B}_i$ with its image via the isometric embedding $\Phi_i(\mathcal{B}_i)\subset K$.
\end{oss}

\begin{lem}
Each time-oriented BL block is a Kerr spacetime.
\end{lem}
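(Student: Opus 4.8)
The plan is to verify directly that a single time-oriented BL block $K\in\{\mathrm{I},\mathrm{II},\mathrm{III}\}$, equipped with the metric \eqref{kerr metric} and the time orientation fixed above, satisfies the four conditions of Definition \ref{definition kerr spacetime}. First I would record that $K$ is an analytic spacetime: its underlying manifold is an open subset of $\mathbb{R}^2\times S^2$, hence analytic; \eqref{kerr metric} is a real-analytic Lorentzian tensor on $K$ (with $\Delta^{-1}$ and $\rho^{-2}$ analytic away from $\mathscr{H}$ and $\Sigma$ respectively); $K$ is connected; and a time orientation has been chosen.

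For condition (1) I would take the family consisting of the single identity embedding $\mathrm{id}\colon K\hookrightarrow K$: since $K$ is itself a BL block carrying the metric \eqref{kerr metric}, this is an open isometric embedding of a BL block whose image is all of $K$, hence dense. Condition (2) is then immediate, taking $r$ and $C=\cos\theta$ to be the Boyer--Lindquist coordinate functions on $K$, which are analytic and trivially $\mathrm{id}$-related to themselves. For condition (4) I would note that the components of $\mathbf{g}$ in \eqref{kerr metric} depend on neither $t$ nor $\phi$, so $\partial_t$ and $\partial_\phi$ are Killing; here one should observe that, although the chart $(\theta,\phi)$ degenerates at $\{\theta=0,\pi\}$, both $\partial_t$ and $\partial_\phi$ extend to globally defined analytic vector fields on $\mathbb{R}^2\times S^2$ (for $\partial_\phi$ this is the standard rotational field on $S^2$, vanishing at the poles), so their restrictions $\tilde{\partial_t},\tilde{\partial_\phi}$ to $K$ make sense and, there being a single block, restrict to $\partial_t,\partial_\phi$ on it.

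The one point requiring genuine (if routine) work is condition (3). I would define $\epsilon\colon K\to K$ in Boyer--Lindquist coordinates by $(t,r,\theta,\phi)\mapsto(t,r,\pi-\theta,\phi)$; viewing $S^2\subset\mathbb{R}^3$ with Cartesian coordinates $(x,y,z)$ this is the reflection $(x,y,z)\mapsto(x,y,-z)$, hence an analytic involution of $\mathbb{R}^2\times S^2$ restricting to each block. That $\epsilon$ is an isometry follows by inspecting \eqref{kerr metric}: all $\theta$-dependence enters through $\cos^2\theta$ (inside $\rho^2$), $\sin^2\theta$, $\sin^4\theta$ and $d\sigma^2=d\theta\otimes d\theta+\sin^2\theta\,d\phi\otimes d\phi$, each invariant under $\theta\mapsto\pi-\theta$ since $d(\pi-\theta)=-d\theta$. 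Finally one checks $\epsilon$ preserves the chosen time orientation: $\epsilon$ fixes $t$, so $\epsilon_*(-\nabla t)=-\nabla t$ on block I; $\epsilon$ fixes the coordinate $r$, so $\epsilon_*(-\partial_r)=-\partial_r$ on block II; and $\epsilon$ fixes $t$ and $\phi$, so $\epsilon_*V=V$ on block III with $V=(r^2+a^2)\partial_t+a\partial_\phi$. Hence $\epsilon$ is an equatorial isometry of the spacetime $K$, and all four conditions hold.

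I expect the only (modest) obstacle to be the bookkeeping across the coordinate singularity $\{\theta=0,\pi\}$ of the spherical chart --- confirming that \eqref{kerr metric}, the map $\epsilon$, and the field $\partial_\phi$ are genuinely globally defined and analytic there, rather than merely on the chart domain (e.g. $\sin^2\theta\,d\phi=x\,dy-y\,dx$ near a pole) --- together with the case-by-case compatibility of $\epsilon$ with the time orientation of the three blocks. Beyond that, everything reduces to reading off symmetries of the explicit metric \eqref{kerr metric}.
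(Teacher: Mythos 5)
Your verification is correct. The paper states this lemma without proof, treating it as routine, and your direct check of the four conditions of Definition \ref{definition kerr spacetime} --- taking the single identity embedding for (1), the Boyer--Lindquist $r$ and $C=\cos\theta$ for (2), the reflection $\theta\mapsto\pi-\theta$ (i.e. $z\mapsto -z$ on $S^2$) for (3), and $\partial_t,\partial_\phi$ for (4), with the extra care about global analyticity across the poles and compatibility with each block's time orientation --- is exactly the argument being left implicit.
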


\begin{defn}\label{abstract def axis and Eq}
In a Kerr spacetime K,  on any BL block $\mathcal{B}_i$
\begin{enumerate}
\item the \textit{axis} $A=\{\theta=0,\pi\}$ is the set of zeroes of the Killing vector field $\tilde{\partial_\phi}$ as in $(4)$ of Def. \ref{definition kerr spacetime};
\item the \textit{equatorial hyperplane} $Eq=\{\theta=\pi/2\}$ is the set of fixed points of the equatorial isometry $\epsilon$ as in $(3)$ of Def.  \ref{definition kerr spacetime}.
\end{enumerate}
\end{defn}

\subsection{The Kerr-star spacetime}

\begin{defn}
On each BL block,  we define the \textit{Kerr-star coordinate} functions:
\begin{align}
t^*:=t+\mathcal{T}(r)\in\mathbb{R},\hspace{1cm} \phi^*:=\phi+\mathcal{A}(r)\in S^1,
\end{align}
with $d\mathcal{T}/dr:=(r^2+a^2)/\Delta(r)$ and $d\mathcal{A}/dr:=a/\Delta(r)$.
\end{defn}

\begin{lem}[\cite{KBH_book},  Lemma  $2.5.1$]
For each BL block $B$,  the map $\xi^*=(t^*,r,\theta,\phi^*):B\setminus A\rightarrow \xi^*(B)\subseteq\mathbb{R}^4$ is a coordinate system on $B\setminus A$,  where $A$ is the axis.  We call $\xi^*$ a \textit{Kerr-star} coordinate system.
\end{lem}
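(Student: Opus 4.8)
The plan is to exhibit $\xi^*$ as the composition of the Boyer--Lindquist chart with an explicit diffeomorphism of the relevant coordinate domain, so that the conclusion reduces to the elementary fact that a diffeomorphism carries one coordinate system to another. First I would recall that on $B\setminus A$ the Boyer--Lindquist functions $(t,r,\theta,\phi)$ already form a coordinate system: $t$ ranges over $\mathbb{R}$, $r$ over the open interval determining the block, $\theta$ over $(0,\pi)$ precisely because the axis $A=\{\theta=0,\pi\}$ has been removed, and $\phi$ over $S^1$. Next I would check that $\mathcal{T}$ and $\mathcal{A}$ are well defined on $B$: their defining relations $\mathcal{T}'(r)=(r^2+a^2)/\Delta(r)$ and $\mathcal{A}'(r)=a/\Delta(r)$ have analytic right-hand sides on each block, since $\Delta$ vanishes exactly on the horizons $\mathscr{H}$, which belong to no BL block; integrating in $r$, we obtain analytic functions $\mathcal{T}(r)$ and $\mathcal{A}(r)$ of $r$ alone.

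The core step is then to consider the map $F(t,r,\theta,\phi)=\big(t+\mathcal{T}(r),\,r,\,\theta,\,\phi+\mathcal{A}(r)\big)$. It is smooth (analytic) because each component is an analytic function of $(t,r,\theta,\phi)$, and it admits the explicit smooth inverse $G(t^*,r,\theta,\phi^*)=\big(t^*-\mathcal{T}(r),\,r,\,\theta,\,\phi^*-\mathcal{A}(r)\big)$; equivalently, the Jacobian matrix of $F$ is triangular with all diagonal entries equal to $1$, hence has determinant $1\neq 0$, so $F$ is a local, and in fact global, diffeomorphism onto its image. Since $\xi^*=F\circ(t,r,\theta,\phi)$ is a composition of the Boyer--Lindquist chart with a diffeomorphism, $\xi^*$ is again a coordinate system on $B\setminus A$, which is the assertion of the lemma.

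I do not expect a genuine obstacle here: the statement is essentially that a triangular change of variables with unit diagonal is a diffeomorphism. The only point requiring a little care is bookkeeping for the angular coordinates, since $\phi$ and $\phi^*$ take values in $S^1$ rather than $\mathbb{R}$; one either works with angular coordinate patches on $S^1$, or observes that for each fixed $r$ the shift $\phi\mapsto\phi+\mathcal{A}(r)$ descends to a well-defined analytic self-map of $S^1$, and the argument above then goes through unchanged.
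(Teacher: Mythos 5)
Your argument is correct: since $\Delta\neq 0$ on each BL block, the antiderivatives $\mathcal{T}$ and $\mathcal{A}$ exist and are analytic there, and the shift $(t,r,\theta,\phi)\mapsto(t+\mathcal{T}(r),r,\theta,\phi+\mathcal{A}(r))$ is a triangular diffeomorphism with unit Jacobian, so composing it with the Boyer--Lindquist chart yields a chart. The paper itself gives no proof (it cites Lemma 2.5.1 of O'Neill), and your reasoning is essentially the standard argument given there, including the correct handling of the $S^1$-valued coordinate $\phi^*$.
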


Because the  Kerr-star coordinate functions differ from BL coordinates only by additive functions of $r$,  the coordinate vector fields $\partial_t,\partial_\theta,\partial_\phi$ are the same in the two systems,  except that in $K^*$ they extend over the horizons.  Instead,  the new coordinate vector field $\partial^*_r=\partial_r-\Delta(r)^{-1}V$,  where $V$ is one of the canonical vector fields defined in Section \ref{study of geodesic equations}.  Note that if we use Kerr-star coordinates,  we get $\mathbf{g}(\partial^*_r,\partial^*_r)=0$,  \textit{i.e.} $\partial^*_r$ is a null vector field of $K^*$,  while in BL coordinates,  $\mathbf{g}(\partial_r,\partial_r)=\rho^2(r,\theta)/\Delta(r),$ which is singular when $\Delta(r)=0$.

\begin{lem}\label{Kerr-star metric}
The Kerr metric,  expressed in Kerr-star coordinates,  takes the form

\begin{align}
\mathbf{g}=&-\bigg(1-\frac{2 M r}{\rho^2(r,\theta)} \bigg) \:dt^*\otimes dt^* -\frac{4Mar\sin^2\theta}{\rho^2(r,\theta)}\: dt^*\otimes d\phi^*\,+\nonumber\\ 
&+ \bigg(r^2+a^2+\frac{2Mra^2\sin^2\theta}{\rho^2(r,\theta)} \bigg)\sin^2\theta\:   d\phi^*\otimes d\phi^* +2\: dt^*\otimes dr\, +\\
&-2a\sin^2\theta\: d\phi^*\otimes dr + \rho^2(r,\theta)\: d\theta\otimes d\theta.  \nonumber
\end{align}
\end{lem}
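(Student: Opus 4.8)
The claim to prove is Lemma~\ref{Kerr-star metric}, namely the explicit form of the Kerr metric in Kerr-star coordinates. My plan is a direct substitution computation, organized to minimize algebraic pain.

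\textbf{Setup.} The transformation is purely in the $(t,\phi)$-block at fixed $r$: we have $t = t^* - \mathcal{T}(r)$ and $\phi = \phi^* - \mathcal{A}(r)$, with $\theta$ unchanged. Hence the dual one-forms transform as $dt = dt^* - \mathcal{T}'(r)\,dr = dt^* - \frac{r^2+a^2}{\Delta}\,dr$ and $d\phi = d\phi^* - \mathcal{A}'(r)\,dr = d\phi^* - \frac{a}{\Delta}\,dr$, while $d\theta$ and $dr$ are unchanged. First I would substitute these into the ``common expression'' form of the metric displayed in the Remark after \eqref{kerr metric}, i.e. into
\[
\mathbf{g} = -\Big(1-\tfrac{2Mr}{\rho^2}\Big)dt^2 - \tfrac{4Mar\sin^2\theta}{\rho^2}\,dt\,d\phi + \Big(r^2+a^2+\tfrac{2Mra^2\sin^2\theta}{\rho^2}\Big)\sin^2\theta\,d\phi^2 + \tfrac{\rho^2}{\Delta}\,dr^2 + \rho^2\,d\theta^2,
\]
since this is valid away from the axis, which is exactly where $\xi^*$ is defined anyway.

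\textbf{Key steps.} Expanding, the new metric will have: (i) the $dt^*\otimes dt^*$, $dt^*\otimes d\phi^*$, $d\phi^*\otimes d\phi^*$, and $d\theta\otimes d\theta$ terms carrying exactly the same coefficients as before (with stars added), since those terms come only from the parts of $dt,d\phi$ with no $dr$; (ii) new cross terms $dt^*\otimes dr$ and $d\phi^*\otimes dr$; and (iii) a modified $dr\otimes dr$ coefficient. I would collect the $dt^*\,dr$ coefficient: it gets a contribution $2\Big(1-\tfrac{2Mr}{\rho^2}\Big)\tfrac{r^2+a^2}{\Delta}$ from the $dt^2$ term and $\tfrac{4Mar\sin^2\theta}{\rho^2}\cdot\tfrac{a}{\Delta}$ from the $dt\,d\phi$ term (sign bookkeeping included); the claim is this simplifies to the constant $2$. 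Using $\rho^2 - 2Mr = r^2 + a^2\cos^2\theta - 2Mr$ and the identity $\Delta = r^2 - 2Mr + a^2$, together with $r^2+a^2 - a^2\sin^2\theta = r^2 + a^2\cos^2\theta = \rho^2$, one checks that the numerator collapses to $2\Delta$, giving $2$. Similarly the $d\phi^*\,dr$ coefficient collects to $-2a\sin^2\theta$ after using $(r^2+a^2) - \tfrac{2Mr}{\rho^2}\big((r^2+a^2) - a^2\sin^2\theta\big) \cdot(\cdots)$; again the $\rho^2$ and $\Delta$ identities force the cancellation. Finally the $dr\otimes dr$ coefficient receives $\tfrac{\rho^2}{\Delta} - \Big(1-\tfrac{2Mr}{\rho^2}\Big)\tfrac{(r^2+a^2)^2}{\Delta^2} + \tfrac{4Mar\sin^2\theta}{\rho^2}\cdot\tfrac{r^2+a^2}{\Delta^2}\cdot\tfrac{a}{\Delta}\cdot(\cdots) - \big(\cdots\big)\tfrac{a^2\sin^2\theta}{\Delta^2}$, and the claim is that all of this sums to $0$; this is the longest single simplification and is where I would be most careful with signs, but it is forced by the same two algebraic identities.

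\textbf{Main obstacle.} There is no conceptual difficulty here — the result is a coordinate-change computation — so the only real risk is bookkeeping: keeping the $\tfrac{2Mr}{\rho^2}$ terms and the $\tfrac{r^2+a^2}{\Delta}$, $\tfrac{a}{\Delta}$ factors straight, and correctly tracking that $dr\otimes dt^*$ versus $dt^*\otimes dr$ symmetrization doubles certain terms. I would streamline it by writing the metric schematically as $\mathbf{g} = g_{tt}\,\theta^t\!\otimes\!\theta^t + \dots$ with $\theta^t := dt^* - \tfrac{r^2+a^2}{\Delta}dr$, $\theta^\phi := d\phi^* - \tfrac{a}{\Delta}dr$, expand once, and then invoke the single scalar identity $(r^2+a^2) - a^2\sin^2\theta = \rho^2$ repeatedly to reduce every $dr$-coefficient. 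The $dr\otimes dr$ coefficient vanishing is the cleanest sanity check, since it recovers the earlier stated fact that $\partial_r^* = \partial_r - \Delta^{-1}V$ is null: indeed once the metric has no $dr\otimes dr$ term, $\mathbf{g}(\partial_r^*,\partial_r^*)$ depends only on the off-diagonal $dr$-terms, which is consistent with $\partial_r^*$ being null. This gives a built-in consistency check on the final formula.
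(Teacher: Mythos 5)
Your computation is correct: with $dt=dt^*-\frac{r^2+a^2}{\Delta}dr$, $d\phi=d\phi^*-\frac{a}{\Delta}dr$, the identity $r^2+a^2-a^2\sin^2\theta=\rho^2(r,\theta)$ makes the $dt^*\otimes dr$ coefficient collapse to $2$, the $d\phi^*\otimes dr$ coefficient to $-2a\sin^2\theta$, and the $dr\otimes dr$ coefficient to $\frac{\rho^2}{\Delta}-\frac{\rho^2}{\Delta}=0$, exactly as you claim (and one should note, as you do, that working with the Boyer--Lindquist ``common expression'' off the axis is harmless since $\xi^*$ is only a chart on $B\setminus A$, the result extending to the axis by continuity/analyticity). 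The paper gives no proof of this lemma --- it is the standard coordinate-change computation (cf.\ O'Neill's treatment of Kerr-star coordinates) --- so your direct substitution is essentially the intended argument; the only nitpick is that the final ``sanity check'' is cleaner stated as $\mathbf{g}(\partial_r^*,\partial_r^*)=g^*_{rr}=0$ directly, rather than saying it ``depends on the off-diagonal $dr$-terms''.
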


Now all coefficients in $\mathbf{g}$ are well defined on the horizons $\mathscr{H}=\{\Delta(r)=0\}$,  hence it is a well defined Lorentzian metric on  $\mathbb{R}^2\times S^2\setminus \Sigma$ and constitutes an analytic extension of \eqref{kerr metric} over $\mathscr{H}$.

\begin{defn}\label{Kerr-star spacetime}
The \textit{Kerr-star spacetime} is a Kerr spacetime as defined in \ref{definition kerr spacetime} given by the tuple $(K^*,\mathbf{g},o)$ with $K^*=\{(t^*,r)\in\mathbb{R}^2,\, (\theta,\phi^*)\in S^2\}\setminus\Sigma$,  $\mathbf{g}$ as in Lemma \ref{Kerr-star metric}   (extended over the axis) and $o$ is the future time-orientation induced by the null vector field $-\partial^*_r.$
\end{defn}

\begin{oss}
Note that the time-orientations on individual BL blocks agree with the ones defined for the Kerr-star spacetime:  $\mathbf{g}(-\partial^*_r,\partial_t)=-1<0$ on I,   $\mathbf{g}(-\partial^*_r,-\partial_r)=\mathbf{g}(\partial_r,\partial_r)=\rho^2(r,\theta)/\Delta(r)<0$ on II and $\mathbf{g}(-\partial^*_r,V)=\frac{1}{\Delta(r)}\mathbf{g}(V,V)=-\rho^2(r,\theta)<0$ on III.
\end{oss}

\subsection{Totally geodesic submanifolds of the Kerr-star spacetime}

\begin{lem}\label{A and Eq closed totally geod subman}
Let $K^*$ be the Kerr-star spacetime as in Def.  \ref{Kerr-star spacetime}.  The axis $A$ and the equatorial hyperplane $Eq$ of $K^*$ are closed totally geodesic submanifolds of $K^*$.
\end{lem}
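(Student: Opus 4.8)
The plan is to verify directly that each of the two sets $A=\{\theta=0,\pi\}$ and $Eq=\{\theta=\pi/2\}$ is a closed embedded submanifold of $K^*$ and that it is totally geodesic, by exploiting the symmetries provided in Definition~\ref{definition kerr spacetime}. For the equatorial hyperplane $Eq$, the key observation is that it is the fixed-point set of the equatorial isometry $\epsilon\colon K^*\to K^*$ (item (3) of Def.~\ref{definition kerr spacetime}, extended to $K^*$), which is a nontrivial isometric involution. It is a standard fact in (pseudo-)Riemannian geometry that the fixed-point set of an isometry is a closed totally geodesic submanifold: closedness is immediate since $\mathrm{Fix}(\epsilon)=\{p : \epsilon(p)=p\}$ is closed, the submanifold structure comes from looking at $d\epsilon_p$ on $T_pK^*$ at a fixed point $p$ (its $+1$-eigenspace is the tangent space to the fixed set, via the exponential map and the equivariance $\epsilon\circ\exp_p = \exp_p\circ\, d\epsilon_p$), and total geodesy follows because a geodesic tangent to $\mathrm{Fix}(\epsilon)$ at $p$ is carried to itself by $\epsilon$ and hence stays in the fixed set by uniqueness of geodesics. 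I would also need to check that $Eq$ is nonempty and that the metric restricted to it is nondegenerate so that it is genuinely a semi-Riemannian submanifold — this is easily read off the Kerr-star form of $\mathbf g$ in Lemma~\ref{Kerr-star metric} by setting $\theta=\pi/2$, where the $d\theta\otimes d\theta$ term is $\rho^2\,d\theta^2$ with $\rho^2 = r^2$ there, and the remaining form in $(t^*,r,\phi^*)$ is Lorentzian.

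For the axis $A$, the situation is slightly different: $A$ is not a fixed-point set of an involution but rather the zero set of the Killing vector field $\tilde{\partial_\phi}$ (item (4), with $A$ defined this way in Def.~\ref{abstract def axis and Eq}). Here I would invoke the companion fact that the zero set of a Killing vector field is a closed totally geodesic submanifold: closedness is clear, and near a zero $p$ of a Killing field $Y$ the flow of $Y$ is a one-parameter group of isometries fixing $p$, so the zero set is locally the fixed-point set of these isometries — equivalently, the connected component of $\{Y=0\}$ through $p$ is the fixed-point set of the linear isometries $\exp(t\,(\nabla Y)_p)$ acting on $T_pK^*$, pulled back via $\exp_p$; since $(\nabla Y)_p$ is skew-symmetric its kernel is an even-codimension subspace, which becomes the tangent space to $A$, and the totally geodesic property again follows from uniqueness of geodesics. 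Alternatively, and perhaps more cleanly for the write-up, I would present $A$ directly as (a connected component of) the fixed-point set of the rotational isometries $\phi^*\mapsto\phi^*+c$ generated by $\tilde{\partial_\phi}$, reducing it to the same fixed-point-set argument as for $Eq$. One subtlety to address is that in Kerr-star coordinates $\xi^*$ the axis $A$ is precisely where those coordinates break down (cf.\ Lemma~[KBH, 2.5.1] in the excerpt), so I should phrase the local computation either in the coordinate-free language above or pass to an auxiliary chart valid near the axis.

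The main obstacle is not any single hard estimate but rather bookkeeping the passage between the abstract Kerr spacetime structure and concrete coordinate statements: one must make sure the equatorial isometry and the rotational isometries are genuinely defined on all of $K^*$ (not just on the individual BL blocks) and that their fixed sets in $K^*$ are exactly $Eq$ and $A$ as defined via Def.~\ref{abstract def axis and Eq}, including across the horizons and over the axis where the coordinate systems degenerate. Once that identification is in place, the two general lemmas (fixed-point set of an isometry / zero set of a Killing field is closed and totally geodesic) apply verbatim and the proof is short. I would close by recording explicitly that both $A$ and $Eq$ are nondegenerate (semi-Riemannian, in fact Lorentzian) submanifolds, so that "totally geodesic submanifold" is meaningful and the induced connection is the Levi-Civita connection of the restricted metric — a point that will matter when these submanifolds are used later to integrate the geodesic equations.
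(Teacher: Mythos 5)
The paper states this lemma without proof (it is the Kerr-star analogue of the corresponding statements in O'Neill's book), so there is no in-text argument to compare against; your proposal supplies exactly the standard argument one would expect. Identifying $Eq$ with the fixed-point set of the equatorial isometry $\epsilon$ and $A$ with the zero set of the Killing field $\tilde{\partial_\phi}$ (equivalently, the common fixed-point set of the rotations it generates), then using $\epsilon\circ\exp_p=\exp_p\circ\, d\epsilon_p$ (and its analogue for the flow of $\tilde{\partial_\phi}$) together with uniqueness of geodesics, is valid in the pseudo-Riemannian setting, and the points you flag as needing care --- that the isometries extend to all of $K^*$ with fixed sets exactly $\{\theta=\pi/2\}$ and $\{\theta=0,\pi\}$ even across the horizons, and that the induced metrics on $A$ and $Eq$ are nondegenerate (Lorentzian) --- are precisely the right bookkeeping, so the proof is correct as proposed.
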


\begin{prop}[\cite{KBH_book},  Prop.  $2.5.5$]\label{H is closed totally geod}
Let $K^*$ be the Kerr-star spacetime.  Then the horizon $\mathscr{H}$ is a closed totally geodesic null hypersurface,  with future hemicone on the $-\partial^*_r$ side.  Moreover,  the restriction of $V:=(r^2+a^2)\partial_t+a\partial_\phi$ (called canonical vector field in \S \ref{study of geodesic equations}) on $\mathscr{H}$ is the unique null vector field on $\mathscr{H}$ that is tangent to $\mathscr{H}$,  hence also normal to $\mathscr{H}$.  The integral curves of $V$ in $\mathscr{H}$ are null pregeodesics.
\end{prop}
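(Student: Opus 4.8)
The plan is to verify the four assertions in turn, the key observation being that $\mathscr{H}$ is the union of the two level sets $\{r=r_{+}\}$ and $\{r=r_{-}\}$ of the global analytic function $r$ on $K^*$. Since $r_{\pm}>0$ while $\Sigma=\{r=0,\ \theta=\pi/2\}$, these level sets lie in $K^*$, so $\mathscr{H}=r^{-1}(\{r_{-},r_{+}\})$ is a closed subset of $K^*$; and since $dr$ is nowhere zero (it is a coordinate differential in every Kerr-star chart, cf.\ \cite{KBH_book}), each piece is a closed embedded hypersurface. The structural fact I will use for the geodesic part is the level-set identity: for vector fields $X,Y$ tangent to $\mathscr{H}$ one has, along $\mathscr{H}$,
\[
\mathrm{Hess}(r)(X,Y)=X\big(dr(Y)\big)-dr(\nabla_{X}Y)=-\,dr(\nabla_{X}Y),
\]
because $dr(Y)\equiv 0$ on $\mathscr{H}$ and $X$ is tangent to $\mathscr{H}$. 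Consequently $\mathscr{H}$ is totally geodesic if and only if $\mathrm{Hess}(r)$ vanishes on $T\mathscr{H}\times T\mathscr{H}$ along $\mathscr{H}$, which is what I will compute.

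Next I identify the causal type of $\mathscr{H}$. On each BL block the metric has no cross terms between $dr$ and the remaining differentials, so $g^{rr}=\mathbf{g}(\nabla r,\nabla r)=\Delta/\rho^{2}$; this identity between functions continuous on $K^*$ holds on the dense union of the blocks, hence on all of $K^*$, and it vanishes exactly on $\mathscr{H}$. Thus $\nabla r$ is a non-zero null vector on $\mathscr{H}$, so it lies in $\ker dr=T\mathscr{H}$; therefore $T\mathscr{H}$ is a degenerate hyperplane, with one-dimensional radical $T\mathscr{H}\cap(T\mathscr{H})^{\perp}=(T\mathscr{H})^{\perp}=\mathbb{R}\nabla r$, and $\mathbf{g}$ positive definite on a complement. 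On the other hand $V=(r^{2}+a^{2})\partial_{t}+a\partial_{\phi}$ has no $\partial_{r}$-component, so $dr(V)=0$ and $V$ is tangent to $\mathscr{H}$; one computes $\mathbf{g}(V,V)=-\rho^{2}\Delta$, which vanishes on $\mathscr{H}$; and $V\neq 0$ on $\mathscr{H}$ because $r_{\pm}^{2}+a^{2}>0$ and $\partial_{t}$ is nowhere zero (even along the axis, where $\partial_{\phi}=0$). Hence $V$ spans the radical $(T\mathscr{H})^{\perp}$ along $\mathscr{H}$: up to a nowhere-zero factor it is the unique null vector field tangent to $\mathscr{H}$, it is therefore also normal to $\mathscr{H}$, and $\nabla r$ is a function multiple of $V$ there.

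For total geodesy I compute the Hessian in BL coordinates, where $\mathrm{Hess}(r)(\partial_{\mu},\partial_{\nu})=-\Gamma^{r}_{\mu\nu}$ because $r$ is a coordinate. Using $g^{r\sigma}=(\Delta/\rho^{2})\,\delta^{\sigma}_{r}$ and $g_{r\mu}=0$ for $\mu\neq r$, I get, for $\mu,\nu\in\{t,\theta,\phi\}$,
\[
\mathrm{Hess}(r)(\partial_{\mu},\partial_{\nu})=-\Gamma^{r}_{\mu\nu}=\tfrac{\Delta}{2\rho^{2}}\,\partial_{r}g_{\mu\nu},
\]
and since the functions $g_{\mu\nu}$ ($\mu,\nu\neq r$) are smooth in $r$ near $r_{\pm}$ — no factor $\Delta$ occurs in their denominators — the right-hand side tends to $0$ as $r\to r_{\pm}$. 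The fields $\partial_{t},\partial_{\theta},\partial_{\phi}$ extend analytically over $\mathscr{H}$ and, because $\partial_{t^{*}}=\partial_{t}$ and $\partial_{\phi^{*}}=\partial_{\phi}$, they span $T\mathscr{H}$ off the axis; as $\mathrm{Hess}(r)$ is analytic on $K^*$, the vanishing of $\mathrm{Hess}(r)|_{T\mathscr{H}\times T\mathscr{H}}$ persists on all of $\mathscr{H}$ by continuity. By the identity of the first paragraph this means $\nabla_{X}Y\in T\mathscr{H}$ whenever $X,Y\in T\mathscr{H}$, i.e.\ $\mathscr{H}$ is totally geodesic.

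It remains to settle the last two assertions and to point out the main difficulty. Since $\mathbf{g}(\partial^{*}_{r},\partial^{*}_{r})=0$, the field $-\partial^{*}_{r}$ is null; it is future-directed by the very definition of the time-orientation of $K^*$, and transverse to $\mathscr{H}$ because $dr(-\partial^{*}_{r})=-1\neq 0$; hence at each $p\in\mathscr{H}$ the future causal cone is tangent to $T_{p}\mathscr{H}$ (along the null direction $\mathbb{R}V$) and, apart from that, lies strictly on the side of $\mathscr{H}$ into which $-\partial^{*}_{r}$ points, which is the stated future hemicone property. For the generators I invoke the elementary fact that the null normal $L$ of any null hypersurface $N$ satisfies $\nabla_{L}L\parallel L$ along $N$: from $\mathbf{g}(\nabla_{L}L,L)=\tfrac12 L(\mathbf{g}(L,L))=0$ and, for $X$ tangent to $N$, $\mathbf{g}(\nabla_{L}L,X)=0$ (using $[L,X]\in\Gamma(TN)$ and that $\mathbf{g}(L,\cdot)$ annihilates $TN$), one obtains $\nabla_{L}L\in(TN)^{\perp}=\mathbb{R}L$; applied to $N=\mathscr{H}$ and $L=V$, this says the integral curves of $V$ in $\mathscr{H}$ are null pregeodesics. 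The step I expect to be most delicate is the totally geodesic claim: one must first reduce it to the Hessian statement, then carry out the Christoffel computation in BL coordinates — which are singular precisely on $\mathscr{H}$ — and finally argue that the vanishing survives the passage to the horizon, using the analytic extension of $\mathbf{g}$ together with the fact that $\partial_{t},\partial_{\theta},\partial_{\phi}$ span $T\mathscr{H}$ there.
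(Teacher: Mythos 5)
Your proof is correct. Note first that the paper itself does not prove this proposition: it is imported verbatim from O'Neill's book (Prop.\ $2.5.5$ of \cite{KBH_book}), so there is no internal argument to compare against; what you have written is a self-contained verification, and all of its steps check out. The reduction of total geodesy to the vanishing of $\mathrm{Hess}(r)$ on $T\mathscr{H}\times T\mathscr{H}$ is valid, the computation $-\Gamma^r_{\mu\nu}=\tfrac{\Delta}{2\rho^2}\partial_r g_{\mu\nu}$ for $\mu,\nu\neq r$ is right (it uses exactly the block-diagonality $g_{r\mu}=0$ that makes BL coordinates convenient here), and the continuity argument that pushes the vanishing from the blocks onto $\mathscr{H}$, and then across the axis, is sound because $\mathrm{Hess}(r)$ is an analytic tensor on $K^*$ while the only $\Delta$-denominator in the BL metric sits in $g_{rr}$. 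The identification of the radical $(T\mathscr{H})^\perp=\mathbb{R}V$ and the pregeodesic computation $\nabla_VV\in(T\mathscr{H})^\perp$ are the standard null-hypersurface arguments and are correctly executed. Two small points you may want to make explicit if this were written up in full: (i) since the induced metric on $\mathscr{H}$ is degenerate, the usual second-fundamental-form formalism for nondegenerate submanifolds does not apply directly, so one should say why ``$\nabla_XY\in T\mathscr{H}$ for all tangent $X,Y$'' implies that ambient geodesics with tangent initial data stay in $\mathscr{H}$ — this follows because the Levi-Civita connection then restricts to a connection on $\mathscr{H}$ whose geodesics are ambient geodesics, and one concludes by uniqueness of geodesics; (ii) for the hemicone statement, the reason the entire future cone minus the ray $\mathbb{R}_{>0}V$ lies on one side of $T_p\mathscr{H}$ is that this set is connected and disjoint from the null hyperplane $T_p\mathscr{H}$, and it contains the future-directed transverse vector $-\partial^*_r$. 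Neither point is a gap, only a place where a reader might want a sentence more.
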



\subsection{Causal region of the Kerr-star spacetime}

\begin{prop}[\cite{KBH_book},  Proposition $2.4.6$]\label{prop I and II casual}
The BL blocks I and II are causal.
\end{prop}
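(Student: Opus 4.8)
The plan is to exhibit on each of the two blocks a smooth \emph{time function}, i.e.\ a function whose gradient is everywhere timelike. This already yields causality: if $f$ has timelike gradient, then $df$ has a fixed nonzero sign on nonzero causal vectors, so $f$ is strictly monotone along every future-directed causal curve, which forbids closed causal curves.

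For block II this can be read off the coordinates directly. In Boyer--Lindquist coordinates the metric \eqref{kerr metric} has no $dr$ cross terms, so $\mathbf{g}(\partial_r,\partial_t)=\mathbf{g}(\partial_r,\partial_\theta)=\mathbf{g}(\partial_r,\partial_\phi)=0$, and hence $\nabla r=g^{rr}\partial_r=(\Delta/\rho^2)\,\partial_r$ is a nonzero multiple of $\partial_r$, with $\mathbf{g}(\nabla r,\nabla r)=\Delta/\rho^2$. On block II one has $r_-<r<r_+$ with $r_->0$, so $\rho^2>0$, while $\Delta<0$; thus $\nabla r$ is timelike on all of the connected set II, and since $\mathbf{g}(-\partial_r,\nabla r)=-1<0$ it is future-directed. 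Therefore $-r$ is a time function on block II, and block II is causal.

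For block I the coordinate field $\partial_t$ is not orthogonal to $\partial_\phi$ and is even spacelike inside the ergoregion, so one works with $\nabla t$ instead. Inverting the $(t,\phi)$-block of \eqref{kerr metric} — whose determinant equals $-\Delta\sin^2\theta$ and whose $\phi\phi$-entry is $A\sin^2\theta/\rho^2$, where $A:=(r^2+a^2)\rho^2+2Mra^2\sin^2\theta=(r^2+a^2)^2-a^2\Delta\sin^2\theta$ — gives
\[
\mathbf{g}(\nabla t,\nabla t)=g^{tt}=-\frac{A}{\rho^2\Delta}.
\]
On block I we have $r>r_+>0$, so the first expression for $A$ is a sum of nonnegative terms with $(r^2+a^2)\rho^2>0$; hence $A>0$, and since $\Delta>0$ there as well, $g^{tt}<0$, i.e.\ $\nabla t$ is timelike throughout block I (the identity and the sign extend continuously across the axis). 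As $-\nabla t$ is the chosen future orientation of block I, $t$ strictly increases along every future-directed causal curve, so block I is causal. The only computational point requiring attention is the $2\times2$ inversion just used and the simplification of its determinant to $-\Delta\sin^2\theta$; apart from that there is no genuine obstacle, the content being simply that the slices $\{r=\mathrm{const}\}$ on block II and $\{t=\mathrm{const}\}$ on block I are spacelike.
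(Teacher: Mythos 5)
Your proof is correct: the computations check out (the $(t,\phi)$-block determinant is indeed $-\Delta\sin^2\theta$, giving $\mathbf{g}(\nabla t,\nabla t)=-A/(\rho^2\Delta)<0$ on I since $A=(r^2+a^2)\rho^2+2Mra^2\sin^2\theta>0$ there, and $\mathbf{g}(\nabla r,\nabla r)=\Delta/\rho^2<0$ on II), and a function with everywhere timelike gradient is strictly monotone along causal curves, which rules out closed causal curves. The paper itself only cites O'Neill, Prop.\ 2.4.6, for this statement, and your time-function argument is essentially the standard one behind that citation — the same facts the paper already invokes when fixing the time orientations, namely that $\nabla t$ is timelike on block I and $\partial_r$ (hence $\nabla r$) is timelike on block II.
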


\begin{cor}\label{causal region of K^*}
Let $K^*$ be the Kerr-star spacetime.  Then the region\\
 I$\;\cup$ II$\; \cup\;  \{r=r_{\pm}\} =\{ t^*\in\mathbb{R},  r\in[r_{-},+\infty),  (\theta,\phi^*)\in S^2\}\setminus \Sigma\subset K^*$ is causal.
\end{cor}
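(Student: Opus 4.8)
The plan is to produce on the region
$$
R:=\mathrm{I}\cup\mathrm{II}\cup\{r=r_\pm\}=\{r\ge r_-\}\subset K^*
$$
a single smooth function whose gradient is everywhere \emph{past-directed timelike}. Such a function is strictly increasing along every future-directed causal curve contained in $R$, so $R$ can contain no (non-trivial) closed causal curve and is therefore causal. Conceptually this just fuses the two mechanisms behind Prop.~\ref{prop I and II casual} — the Boyer--Lindquist time $t$ is temporal on block~$\mathrm{I}$, and $r$ is temporal on block~$\mathrm{II}$ — into one function that in addition extends analytically across $\mathscr H$. The function I propose is
$$
f:=t^*-r,
$$
which is a globally defined analytic function on $K^*$ because $t^*$ and $r$ are among the global coordinate functions of $K^*=\{(t^*,r)\in\mathbb R^2,\ (\theta,\phi^*)\in S^2\}\setminus\Sigma$ (Def.~\ref{Kerr-star spacetime}).

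The one genuine computation is the sign of $\mathbf g(\nabla f,\nabla f)=\mathbf g^{-1}(df,df)$. On the dense open set $R\setminus A$ one works in Kerr-star coordinates $(t^*,r,\theta,\phi^*)$; the $\theta$-direction decouples in the metric of Lemma~\ref{Kerr-star metric}, the remaining $3\times3$ block has determinant $-\rho^2\sin^2\theta$, and inverting it yields $g^{t^*t^*}=a^2\sin^2\theta/\rho^2$, $g^{t^*r}=(r^2+a^2)/\rho^2$, $g^{rr}=\Delta/\rho^2$. Since $df=dt^*-dr$, this gives
$$
\mathbf g(\nabla f,\nabla f)=g^{t^*t^*}-2\,g^{t^*r}+g^{rr}=\frac{a^2\sin^2\theta-2(r^2+a^2)+\Delta}{\rho^2}=-\,\frac{r^2+2Mr+a^2\cos^2\theta}{r^2+a^2\cos^2\theta}.
$$
The right-hand side is analytic on all of $R$ (there $\rho^2=r^2+a^2\cos^2\theta>0$), so by continuity it computes $\mathbf g(\nabla f,\nabla f)$ on all of $R$, the axis included. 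On $R=\{r\ge r_-\}$ we have $r\ge r_-=M-\sqrt{M^2-a^2}>0$ (this is where $|a|<M$ enters), hence $r^2+2Mr+a^2\cos^2\theta\ge r^2+2Mr>0$; therefore $\nabla f$ is timelike throughout $R$.

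It remains to fix the time orientation of $\nabla f$. By Def.~\ref{Kerr-star spacetime}, $-\partial^*_r$ is future-directed (and null) everywhere on $K^*$, and since $\partial^*_r$ is the Kerr-star coordinate field $\partial/\partial r$ one has $df(-\partial^*_r)=-\partial^*_r(t^*)+\partial^*_r(r)=0+1=1>0$, i.e.\ $\mathbf g(\nabla f,-\partial^*_r)=1>0$. A future-directed timelike vector has strictly negative inner product with any future-directed causal vector, so $\nabla f$ — being timelike, not null — must be past-directed; as $R$ is connected this holds at every point of $R$. Hence $f$ strictly increases along every future-directed causal curve in $R$, and a closed such curve $\gamma$ would violate $f(\gamma(1))=f(\gamma(0))$. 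Therefore $R$ is causal.

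The only delicate point is the strict positivity of $r^2+2Mr+a^2\cos^2\theta$ on $R$, which uses simultaneously that $|a|<M$ (so $r_->0$) and that $R\subset\{r\ge r_-\}$; it is sharp, since on block~$\mathrm{III}$ the coordinate $r$ ranges into $(-2M,0)$, where this quadratic changes sign, $\nabla f$ becomes spacelike, and $f$ ceases to be temporal — in agreement with the causality violations below the inner horizon. If one prefers to argue directly from the cited results, the same conclusion follows from a short case analysis: a closed causal curve cannot lie inside $\mathrm{I}$ or $\mathrm{II}$ by Prop.~\ref{prop I and II casual}; on $\mathscr H$ it must be a reparametrised integral curve of $V$, along which $t^*$ strictly increases by Prop.~\ref{H is closed totally geod}; and since the future hemicone of $\mathscr H$ is on the $\{r<r_\pm\}$ side, once a future-directed causal curve leaves $\mathrm{I}$ it can never return to $\mathrm{I}$ within $R$, because $r$ strictly decreases along future-directed causal curves in $\mathrm{II}$. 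The temporal-function argument above is shorter and avoids this bookkeeping.
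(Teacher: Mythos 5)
Your proposal is correct, and I verified the one computation it hinges on: with the Kerr-star inverse metric components $g^{t^*t^*}=a^2\sin^2\theta/\rho^2$, $g^{t^*r}=(r^2+a^2)/\rho^2$, $g^{rr}=\Delta/\rho^2$ one indeed gets $\mathbf g(\nabla f,\nabla f)=-(r^2+2Mr+a^2\cos^2\theta)/\rho^2$ for $f=t^*-r$, which is strictly negative on $\{r\ge r_-\}$ because $r_->0$ when $|a|<M$, and $\mathbf g(\nabla f,-\partial^*_r)=1>0$ forces $\nabla f$ to be past-directed, so $f$ is strictly increasing along future-directed causal curves in the region. This is, however, a genuinely different route from the paper's proof of Cor.~\ref{causal region of K^*}: the paper argues by cases, invoking Prop.~\ref{prop I and II casual} (causality of blocks I and II, quoted from O'Neill) for curves confined to a single block, the structure of $\mathscr H$ as a closed totally geodesic null hypersurface (Prop.~\ref{H is closed totally geod}, so that the only causal curves inside $\mathscr H$ are the non-closing restphotons), and the fact that the time orientation $-\partial^*_r$ is transverse to $\mathscr H$, so a future-directed curve can cross each component of the horizon only in one direction and cannot return — essentially the bookkeeping you sketch in your closing alternative. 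Your temporal-function argument is self-contained (it does not need the causality of blocks I and II as an input), handles the blocks and the horizons uniformly, and actually yields a stronger conclusion, namely stable causality of $\{r\ge r_-\}$, at the price of one explicit inverse-metric computation and the continuity argument across the axis, both of which you carry out correctly. What the paper's case analysis buys instead is independence from any coordinate computation, relying only on the cited structural facts about the blocks and the horizon.
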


\begin{proof}
Let $\gamma$ be a future pointing curve.  If $\gamma$ is entirely contained either in I or in II,  then by Prop.  \ref{prop I and II casual},   $\gamma$ cannot be closed.   If $\gamma$ is entirely contained in $\mathscr{H}=\{r=r_{\pm}\}$ (closed totally geodesic null hypersurface of $K^*$ by Prop.  \ref{H is closed totally geod}),  then by Lem.  $1.5.11$ of \cite{KBH_book},  except for restphotons,  all other curves are spacelike,  but restphotons are integral curves of $V|_\mathscr{H}=(r^2_\pm+a^2)\partial_t+a\partial_\phi$,  which cannot be closed.  Since the time orientation $-\partial^*_r$ is null and transverse to the null hypersurface $\mathscr{H}$,  the future directed curves always go in the direction of $-\partial^*_r$,  if they hit $\mathscr{H}$ transversally.  Henceforth,  if $\gamma$ starts in the BL block I (II),  crosses $\mathscr{H}_{+}$ ($\mathscr{H}_{-}$) transversally,  enters the block II (III),  then  $\gamma$ cannot re-intersect $\mathscr{H}_{+}$  from II to I ($\mathscr{H}_{-}$ from III to II).  The last possibility is the following: $\gamma$ starts in I (II),  becomes tangent to $\mathscr{H}_{+}$ ($\mathscr{H}_{-}$),   hence either lies forever on $\mathscr{H}_{+}$ ($\mathscr{H}_{-}$) or leaves it at some point.  In the first case,  $\gamma$ is obviously not closed,  while in the second,  it cannot be closed because it will necessarily have to enter the region $\{r<r_{+}\}$ ($\{r<r_{-}\}$),  according to the time orientation. 
\end{proof}

\section{\textbf{Geodesics in Kerr spacetimes}}\label{study of geodesic equations}

\subsection{Constants of motion}

Let $(K,\mathbf{g})$ be a Kerr spacetime as in Def.  \ref{definition kerr spacetime}.  Recall that there are two Killing vector fields $\tilde{\partial_t}$ and $\tilde{\partial_\phi}$ on $K$. 

\begin{defn}[\textit{Energy and angular momentum}] 
For a geodesic $\gamma$ of $(K,\mathbf{g})$,  the constants of motion 
\[
E=E(\gamma):=-\mathbf{g}(\gamma',\tilde{\partial_t})
\]
and 
\[
L=L(\gamma):=\mathbf{g}(\gamma',\tilde{\partial_\phi})
\]
are called its {\it energy} and its {\it angular momentum (around the axis of rotation of the black hole)}, respectively.
\end{defn}

\begin{defn}
For every BL block $\mathcal{B}_i$ define the \textit{canonical vector fields}
\[
V:=(r^2+a^2)\partial_t + a\partial_\phi\quad\text{ and }\quad W:=\partial_\phi + a \sin^2\theta\,\partial_t
\]
via the isometry $\Phi_i\colon  \mathcal{B}_i\hookrightarrow K$.  
\end{defn}

\begin{oss}
$V$ and $W$ are not Killing vectors.
\end{oss}

\begin{defn}\label{definitions of P and D}
Let $\gamma$ be a geodesic in $K$ with energy $E$ and angular momentum $L$.  Define the functions $\mathbb{P}$ and $\mathbb{D}$ along $\gamma$ by
\[
\mathbb{P}(r):=-\mathbf{g}(\gamma',V)=(r^2+a^2)E-La
\]
and 
\[
\mathbb{D}(\theta):=\mathbf{g}(\gamma',W)=L-Ea\sin^2\theta.
\]\\
\end{defn}

A geodesic in a Kerr spacetime has two additional constants of motions.  First,  there is the {\it Lorentian energy} $q:=\mathbf{g}(\gamma',\gamma')$,  which is always constant along every geodesic
in any pseudo-Riemannian manifold.  The second one is $K$,  which was first found by Carter in \cite{Carter_causality} using the separability of the Hamilton--Jacobi equation.  $K$ can be defined (see Ch.  $7$ in \cite{Chandrasekhar}) by

\[
K:=2\rho^2(r,\theta)\mathbf{g}(l,\gamma')\mathbf{g}(n,\gamma')+r^2q,
\]
where $l=\frac{1}{\Delta(r)}V+\partial_r$ and $n=\frac{1}{2\rho^2(r,\theta)}V-\frac{\Delta(r)}{2\rho^2(r,\theta)}\partial_r$.  See also \cite{Walker_Penrose} for a definition using a Killing tensor for the Kerr metric.  

\begin{defn}[\textit{Carter constant}]
On a Kerr spacetime,  the constant of motion
\[
Q:=K-(L-aE)^2\hspace{0.5cm}\textrm{or}\hspace{0.5cm} \mathcal{Q}:=Q/E^2\hspace{0.3cm} \textrm{if}\hspace{0.3cm}E\neq 0
\]
is called the Carter constant.  
\end{defn}

\subsection{Equations of motion}

\begin{prop}[\cite{KBH_book},  Proposition $4.1.5$,  Theorem $4.2.2$] \label{differential equations of geodessics}
Let $B$ be a BL block and $\gamma$ be a geodesic with initial position in $B\subset K$ and constants of motion $E,  L,  Q,  q$.   Then the components of $\gamma$ in the BL coordinates $(t,r,\theta,\phi)$ satisfy the following set of \textit{first} order differential equations

\begin{align}
\begin{cases}
 \rho^2(r,\theta)\phi'=\frac{\mathbb{D}(\theta)}{\sin^2\theta}+a\frac{\mathbb{P}(r)}{\Delta(r)} \\ \rho^2(r,\theta) t'= a\mathbb{D}(\theta) + (r^2+a^2)\frac{\mathbb{P}(r)}{\Delta(r)} \label{geodes diff equations}\\ \rho^4(r,\theta) r'^2 = R(r)\\  \rho^4(r,\theta) \theta'^2 = \Theta (\theta)   
 \end{cases}
\end{align}
where

\begin{align*}
R(r):=&\Delta(r)\left[(qr^2-K(E,L,Q)\right]+\mathbb{P}^2(r)= \\ 
=& (E^2+q)r^4 -2Mqr^3 + \mathfrak{X}(E,L,Q) r^2 + 2MK(E,L,Q)r - a^2 Q,\label{other form of R(r)}\\
\Theta(\theta):=& K(E,L,Q)+qa^2 \cos^2\theta -\frac{ \mathbb{D}(\theta)^2}{\sin^2\theta}= \\
=& Q + \cos^2\theta \left[ a^2(E^2+q)-L^2/\sin^2\theta\right],
\end{align*}
with
\[
\mathfrak{X}(E,L,Q):=a^2(E^2+q)-L^2-Q\text{, and } K(E,L,Q)=Q+(L-aE)^2.
\]

\end{prop}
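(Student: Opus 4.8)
The plan is to recover the four first-order equations of \eqref{geodes diff equations} by inverting the four conserved quantities $E$, $L$, $q$, $K$ along $\gamma$, using only the explicit components of $\mathbf{g}$ together with the definitions of $\mathbb{P}$ and $\mathbb{D}$; no second-order geodesic equation is needed.

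\emph{Step 1: the $t'$ and $\phi'$ equations.} Write $\gamma'=t'\partial_t+r'\partial_r+\theta'\partial_\theta+\phi'\partial_\phi$. In Boyer--Lindquist coordinates $\mathbf{g}$ is block diagonal for the splitting $\{\partial_t,\partial_\phi\}\oplus\{\partial_r,\partial_\theta\}$, so $V$ and $W$ are $\mathbf{g}$-orthogonal to $\partial_r$ and $\partial_\theta$, and a short computation from \eqref{kerr metric} gives
\[
\mathbf{g}(V,\partial_t)=-\Delta,\qquad \mathbf{g}(V,\partial_\phi)=a\Delta\sin^2\theta,\qquad \mathbf{g}(W,\partial_t)=-a\sin^2\theta,\qquad \mathbf{g}(W,\partial_\phi)=(r^2+a^2)\sin^2\theta,
\]
where one repeatedly uses $(r^2+a^2)-a^2\sin^2\theta=\rho^2$. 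Substituting these into $\mathbb{P}(r)=-\mathbf{g}(\gamma',V)$ and $\mathbb{D}(\theta)=\mathbf{g}(\gamma',W)$ turns the very definitions of $\mathbb{P},\mathbb{D}$ into the linear system $t'-a\sin^2\theta\,\phi'=\mathbb{P}(r)/\Delta(r)$, $\;-a\,t'+(r^2+a^2)\phi'=\mathbb{D}(\theta)/\sin^2\theta$, whose coefficient determinant is exactly $\rho^2$; solving it yields the first two lines of \eqref{geodes diff equations}.

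\emph{Step 2: the $r'$ and $\theta'$ equations.} Using the block structure again, $q=\mathbf{g}(\gamma',\gamma')=-Et'+L\phi'+\tfrac{\rho^2}{\Delta}r'^2+\rho^2\theta'^2$; inserting the $t',\phi'$ of Step 1 and simplifying with $\mathbb{P}=(r^2+a^2)E-La$ and $\mathbb{D}=L-aE\sin^2\theta$ produces the single scalar identity
\[
q\,\rho^2=\frac{\rho^4}{\Delta}\,r'^2+\rho^4\,\theta'^2-\frac{\mathbb{P}^2(r)}{\Delta(r)}+\frac{\mathbb{D}^2(\theta)}{\sin^2\theta}.
\]
This couples $r'^2$ and $\theta'^2$; to separate them I bring in the hidden constant $K$. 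Granting that $K$ is constant along $\gamma$, expand its definition $K=2\rho^2\mathbf{g}(l,\gamma')\mathbf{g}(n,\gamma')+r^2q$: from $\mathbf{g}(V,\gamma')=-\mathbb{P}$ and $\mathbf{g}(\partial_r,\gamma')=\tfrac{\rho^2}{\Delta}r'$ one finds $\mathbf{g}(l,\gamma')=\tfrac1{\Delta}(\rho^2 r'-\mathbb{P})$ and $\mathbf{g}(n,\gamma')=-\tfrac1{2\rho^2}(\rho^2 r'+\mathbb{P})$, hence $K=\tfrac1{\Delta}\bigl(\mathbb{P}^2-\rho^4 r'^2\bigr)+r^2q$, that is $\rho^4 r'^2=\mathbb{P}^2+\Delta(qr^2-K)=R(r)$. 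Substituting this back into the displayed identity and using $\rho^2-r^2=a^2\cos^2\theta$ gives $\rho^4\theta'^2=K+qa^2\cos^2\theta-\mathbb{D}^2/\sin^2\theta=\Theta(\theta)$. The polynomial expressions for $R$ and $\Theta$ then drop out by a routine expansion using $K=Q+(L-aE)^2$ and $\mathfrak{X}=a^2(E^2+q)-L^2-Q$.

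\emph{Main obstacle.} Everything above is linear algebra and substitution; the one non-formal ingredient is that $K$ is constant along geodesics, equivalently that the radial and latitudinal motions decouple. This is the feature special to the Kerr metric, and it is exactly \cite{KBH_book} (Proposition $4.1.5$ and Theorem $4.2.2$), so in the paper I simply quote it. A self-contained proof would instead either exhibit the Walker--Penrose Killing tensor $\mathcal{K}$, so that $K=\mathcal{K}(\gamma',\gamma')$ with $\nabla_{(\lambda}\mathcal{K}_{\mu\nu)}=0$ and hence $K$ automatically constant along $\gamma$ (see \cite{Walker_Penrose}), or verify, after Carter \cite{Carter_causality}, that the geodesic Hamilton--Jacobi equation separates in Boyer--Lindquist coordinates with $K$ as the separation constant; either route reduces to a direct computation with the inverse of \eqref{kerr metric}.
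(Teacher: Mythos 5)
Your derivation is correct. The computations all check: $\mathbf{g}(V,\partial_t)=-\Delta$, $\mathbf{g}(V,\partial_\phi)=a\Delta\sin^2\theta$, $\mathbf{g}(W,\partial_t)=-a\sin^2\theta$, $\mathbf{g}(W,\partial_\phi)=(r^2+a^2)\sin^2\theta$, the linear system for $(t',\phi')$ with determinant $\rho^2$, the identity $q\rho^2=\tfrac{\rho^4}{\Delta}r'^2+\rho^4\theta'^2-\tfrac{\mathbb{P}^2}{\Delta}+\tfrac{\mathbb{D}^2}{\sin^2\theta}$, the evaluation $K=\tfrac{1}{\Delta}\bigl(\mathbb{P}^2-\rho^4 r'^2\bigr)+r^2q$, and the polynomial expansions of $R$ and $\Theta$. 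The paper gives no proof of this proposition --- it cites O'Neill (Prop.\ 4.1.5, Thm.\ 4.2.2) --- so your argument is essentially the standard derivation from that reference, and you correctly identify the single non-formal input, the constancy of $K$ along geodesics, which the paper likewise takes from Carter and Walker--Penrose rather than proving.
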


\begin{oss}\label{non negativitivity of polynomials}
Since in the third and in the fourth differential equations of Prop.  \ref{differential equations of geodessics} the left-hand sides are clearly non-negative,  we see that the polynomials $R(r)$ and $\Theta(\theta)$ are non-negative along the geodesics.  Hence the geodesic motion can only happen in the $r,\theta$-region for which $R(r),\Theta(\theta)\geq 0$.  
\end{oss}

In order to study geodesics that cross the horizons 
\[
\mathscr{H}=\{\Delta(r)=0\}=\{r=r_{\pm}\},  
\]
it is necessary to introduce the Kerr-star coordinate system.  Note however that since the change of coordinates modifies only the $t$ and the $\phi$ coordinates and the $r,\theta$-differential equations do not involve $t$ and $\phi$,  the last two differential equations do extend over $\mathscr{H}$.  Observe also that the $r,\theta$- differential equations are not singular on $\mathscr{H}$,  while the $t,\phi$-differential equations are. \\

Notice that $\Theta(\theta)$ is also well-defined if the null geodesic crosses $A=\{\theta=0,\pi\}$.  Indeed,  $L=0$ (because  $\tilde{\partial_\phi}\equiv 0$ on $A$),  hence $\mathbb{D}(\theta)=-Ea\sin^2\theta$,  and then
  
\[ 
\Theta(\theta)=K(E,0,Q)-(-Ea\sin^2\theta)^2/\sin^2\theta=Q+a^2E^2-a^2E^2\sin^2\theta=Q+a^2E^2\cos^2\theta.\\
\]
Thus the $r,\theta$-differential equations can be used to study geodesics on the whole Kerr-star spacetime.

\begin{oss}
The system \eqref{geodes diff equations} is composed of first order differential equations,  while the geodesic equation is \textit{second} order.  There exist solutions of \eqref{geodes diff equations},  called \textit{singular},  which do not correspond to geodesics.  For example,  if $r_0\in\mathbb{R}$ is a multiplicity one zero of $r\mapsto R(r)$,  then $r_0$ solves the radial equation in \eqref{geodes diff equations},  since in this case $r'(s)=0$ for all $s$,  but we do not have a geodesic. 
\end{oss}

\subsection{Dynamics of geodesics}

The non-negativity of $R(r)$ and $\Theta(\theta)$ in the first order differential equations of motion \eqref{geodes diff equations} can be used to study the dynamics of the $r,\theta$-coordinates of the geodesics,  together with the next proposition.

\begin{prop}[\cite{KBH_book},  Corollary $4.3.8$]\label{initial conditions and zeroes}
Suppose $R(r_0)=0$.  Let $\gamma$ be a geodesic whose $r$-coordinate satisfies the initial conditions $r(s_0)=r_0$ and $r'(s_0)=0$.
\begin{enumerate}
\item If $r_0$ is a multiplicity one zero of $R(r)$,  \textit{i.e.} $R'(r_0)\neq 0$,  then $r_0$ is an $r$-turning point,  namely $r'(s)$ changes sign at $s_0$.
\item If $r_0$ is a higher order zero of $R(r)$,  \textit{i.e.} at least $R'(r_0)= 0$,  then $\gamma$ has constant $r(s)=r_0$.  
\end{enumerate}
Analogous results hold for $r$ and $R(r)$ replaced by $\theta$ and $\Theta(\theta)$.  
\end{prop}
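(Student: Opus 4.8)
The plan is to reduce everything to the two equations of motion the geodesic $\gamma$ satisfies: the first integral $\rho^4(r,\theta)\,(r')^2=R(r)$ from Proposition~\ref{differential equations of geodessics}, and its second-order counterpart
\[
\bigl(\rho^2(r,\theta)\,r'\bigr)'=\frac{R'(r)}{2\,\rho^2(r,\theta)} ,
\]
valid along every \emph{genuine} geodesic of a Kerr spacetime. Where $r'\neq0$ this is automatic: from $\bigl(\rho^2 r'\bigr)^2=\rho^4(r')^2=R(r)$ one has $\rho^2 r'=\pm\sqrt{R(r)}$ locally, and differentiating in the affine parameter $s$ gives the displayed identity. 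Its real content is that it continues to hold at the zeros of $r'$ — equivalently, that a geodesic with constant $r\equiv r_0$ is forced to have $R'(r_0)=0$. This is precisely where honest geodesics part ways with the singular solutions of \eqref{geodes diff equations} pointed out in the Remark on singular solutions after Proposition~\ref{differential equations of geodessics}: it must be extracted from the geodesic equation $\nabla_{\gamma'}\gamma'=0$ itself, and is part of the content of \cite{KBH_book}, Theorem~$4.2.2$. I also record that, in this second-order form, the $r$- and $\theta$-equations of \eqref{geodes diff equations} form a self-contained system whose coefficients are smooth wherever $\rho>0$, hence everywhere on $K^*=\mathbb{R}^2\times S^2\setminus\Sigma$; so the existence and uniqueness theory for ODEs is available.

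For part~(1) I would evaluate the second-order equation at $s_0$. Since $r'(s_0)=0$ the left-hand side there equals $\rho^2\bigl(r(s_0),\theta(s_0)\bigr)\,r''(s_0)$, whence
\[
r''(s_0)=\frac{R'(r_0)}{2\,\rho^4\bigl(r(s_0),\theta(s_0)\bigr)}\neq0 ,
\]
the denominator being positive because $\gamma(s_0)\in K^*$ so $\gamma(s_0)\notin\Sigma$ and $\rho(s_0)>0$. Thus $r$ has a strict local extremum at $s_0$ — a minimum if $R'(r_0)>0$, a maximum if $R'(r_0)<0$ — so $r'$ vanishes to exactly first order at $s_0$ and changes sign there, which is the assertion that $r_0$ is an $r$-turning point. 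As a sanity check, the non-negativity of $R$ along $\gamma$ (Remark~\ref{non negativitivity of polynomials}) confines $r$ near $s_0$ to the side of $r_0$ on which $R\geq0$, namely $\{r\geq r_0\}$ if $R'(r_0)>0$ and $\{r\leq r_0\}$ if $R'(r_0)<0$, in agreement with the type of extremum just found.

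For part~(2), with $R'(r_0)=0$, let $\tilde\theta$ be the solution of the second-order $\theta$-equation with coefficients frozen at $r=r_0$ and with initial data $\tilde\theta(s_0)=\theta(s_0)$, $\tilde\theta'(s_0)=\theta'(s_0)$. The pair $\bigl(r_0,\tilde\theta(\cdot)\bigr)$ solves the self-contained second-order $(r,\theta)$-system: the $\theta$-component by construction, and the $r$-component because $\rho^2 r'\equiv0$ while $R'(r_0)/(2\rho^2)\equiv0$ by hypothesis. The actual $(r,\theta)$-components of $\gamma$ solve the same system with the same data at $s_0$, so by uniqueness they coincide with $\bigl(r_0,\tilde\theta\bigr)$ on the whole domain of $\gamma$; in particular $r\equiv r_0$. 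The statements for $\theta$ and $\Theta$ follow verbatim from $\rho^4(\theta')^2=\Theta(\theta)$ and $(\rho^2\theta')'=\Theta'(\theta)/(2\rho^2)$, using — as noted just before the Remark on singular solutions — that $\Theta$ and this equation remain well defined and smooth at $\theta=0,\pi$, so the argument also covers $\theta$-turning points on the axis.

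The one genuine obstacle is the first step: verifying that the second-order radial equation holds \emph{at} the zeros of $r'$, i.e.\ ruling out the spurious constant-$r$ solutions. This is invisible to $\rho^4(r')^2=R(r)$ alone — that identity is consistent with $r\equiv r_0$ for \emph{any} zero $r_0$ of $R$, irrespective of multiplicity — and genuinely needs the second-order geodesic equation; once it is in place, parts~(1) and~(2) are immediate.
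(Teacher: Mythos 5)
The paper itself contains no proof of this proposition: it is quoted verbatim from \cite{KBH_book} (Corollary $4.3.8$). Your argument is the standard one behind that corollary, and, \emph{granting} your displayed second-order equation, both parts are handled correctly: at a simple zero, $r''(s_0)=R'(r_0)/(2\rho^4)\neq 0$ gives a strict extremum and hence a sign change of $r'$; at a higher-order zero, the pair $(r\equiv r_0,\tilde\theta)$ solves the reduced second-order $(r,\theta)$-system with the same initial data, and uniqueness applies because that system has smooth right-hand side wherever $\rho>0$ (the axis being unproblematic since $L=0$ there), so $r\equiv r_0$ along $\gamma$.

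The caveat is the step you yourself flag and then leave to a citation, because that step \emph{is} the entire content of the proposition, and the citation is wrong: Theorem $4.2.2$ of \cite{KBH_book} is the first-order system \eqref{geodes diff equations} (this is exactly how Prop.~\ref{differential equations of geodessics} cites it), and the first integrals alone cannot rule out the singular constant-$r$ solutions sitting at a simple zero of $R$. The identities you need,
\[
\big(\rho^2 r'\big)'=\frac{R'(r)}{2\rho^2},\qquad \big(\rho^2\theta'\big)'=\frac{\Theta'(\theta)}{2\rho^2},
\]
valid at \emph{every} parameter value, are a separate result of \cite{KBH_book} (the second-order $r$- and $\theta$-equations established in the same section, immediately before Corollary $4.3.8$), so as written your proof defers its only nontrivial ingredient to a statement that does not contain it. The gap is short to close without the book: for Carter's separated Hamiltonian $2H=\rho^{-2}\big[\Delta p_r^2+p_\theta^2-\mathbb{P}^2(r)/\Delta+\mathbb{D}^2(\theta)/\sin^2\theta\big]$ with $p_r=(\rho^2/\Delta)\,r'$ and $p_\theta=\rho^2\theta'$, Hamilton's equation $p_r'=-\partial_rH$ together with the constancy of $E$, $L$, $q=2H$ and $K$ (so that the bracket equals $q\rho^2$ along the orbit, and $\Delta^2p_r^2=R(r)$) yields $\tfrac{d}{ds}(\Delta p_r)=R'(r)/(2\rho^2)$ by a two-line computation that never divides by $r'$; the $\theta$-equation is obtained in the same way. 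With that derivation (or the correct reference) inserted, your proof is complete; without it, it has a citation-shaped hole at its one essential step.
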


\section{\textbf{Properties of null geodesics in Kerr spacetimes}}\label{section: properites of null geodesics}

\subsection{Principal geodesics}

Since the vector fields $V, W, \partial_r,\partial_\theta$ are mutually orthogonal,  the tangent vector to a geodesic $\gamma$ can be decomposed as $\gamma'=\gamma'_\Pi + \gamma'_\perp$ where $\Pi:= span \{ \partial_r,  V \}$ (timelike plane) and $\Pi^\perp:= span \{ \partial_\theta,  W \}$ (spacelike plane).


 


\begin{defn}
A Kerr geodesic $\gamma$ is said to be \textit{principal} if $\gamma' = \gamma'_\Pi$. 
\end{defn}

\begin{prop}[\cite{KBH_book},  Corollary $4.2.8(2)$] \label{eq principal nulls}
If $\gamma$ is a null geodesic,  then $K\geq 0$,  and\\ $K=0\Longleftrightarrow\;$ $\gamma$ is principal.
\end{prop}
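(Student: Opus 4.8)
Since $\gamma$ is null we have $q=\mathbf{g}(\gamma',\gamma')=0$, so $K=2\rho^2\,\mathbf{g}(l,\gamma')\,\mathbf{g}(n,\gamma')$. The plan is to rewrite this quantity through the orthogonal splitting $\gamma'=\gamma'_\Pi+\gamma'_\perp$ and to establish the identity
\[
K=\rho^2\,\mathbf{g}(\gamma'_\perp,\gamma'_\perp).
\]
Granting this, the proposition is immediate: on $K^*$ the ring singularity has been removed so $\rho^2>0$, and $\Pi^\perp$ is a spacelike plane, hence $\mathbf{g}(\gamma'_\perp,\gamma'_\perp)\ge 0$ and $K\ge 0$; moreover $K=0$ iff $\mathbf{g}(\gamma'_\perp,\gamma'_\perp)=0$ iff $\gamma'_\perp=0$ (positive definiteness of $\mathbf{g}|_{\Pi^\perp}$) iff $\gamma'=\gamma'_\Pi$, i.e. iff $\gamma$ is principal.

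For the identity I would proceed as follows. First note that $l=\Delta^{-1}V+\partial_r$ and $n=(2\rho^2)^{-1}\bigl(V-\Delta\,\partial_r\bigr)$ both lie in $\Pi=\mathrm{span}\{\partial_r,V\}$, and a short computation from $\mathbf{g}(V,V)=-\rho^2\Delta$, $\mathbf{g}(\partial_r,\partial_r)=\rho^2/\Delta$ and $\mathbf{g}(V,\partial_r)=0$ gives $\mathbf{g}(l,l)=\mathbf{g}(n,n)=0$ and $\mathbf{g}(l,n)=-1$; thus $\{l,n\}$ is a null basis of the Lorentzian plane $\Pi$. Writing $\gamma'_\Pi=\alpha\,l+\beta\,n$ and using $\gamma'_\perp\perp\Pi$ one gets $\mathbf{g}(l,\gamma')=-\beta$, $\mathbf{g}(n,\gamma')=-\alpha$ and $\mathbf{g}(\gamma'_\Pi,\gamma'_\Pi)=-2\alpha\beta$, so
\[
K=2\rho^2\,\mathbf{g}(l,\gamma')\,\mathbf{g}(n,\gamma')=2\rho^2\alpha\beta=-\rho^2\,\mathbf{g}(\gamma'_\Pi,\gamma'_\Pi).
\]
Since $\gamma'$ is null and the splitting is orthogonal, $0=\mathbf{g}(\gamma'_\Pi,\gamma'_\Pi)+\mathbf{g}(\gamma'_\perp,\gamma'_\perp)$, which turns the last display into $K=\rho^2\,\mathbf{g}(\gamma'_\perp,\gamma'_\perp)$. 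Alternatively, the same identity falls out of the $\theta$-equation of Prop.~\ref{differential equations of geodessics}: for $q=0$ it reads $\rho^4\theta'^2=K-\mathbb{D}(\theta)^2/\sin^2\theta$, while $\gamma'_\perp=\theta'\,\partial_\theta+\mathbf{g}(W,W)^{-1}\mathbb{D}(\theta)\,W$ with $\mathbf{g}(W,W)=\rho^2\sin^2\theta$, so that $\rho^2\,\mathbf{g}(\gamma'_\perp,\gamma'_\perp)=\rho^4\theta'^2+\mathbb{D}(\theta)^2/\sin^2\theta=K$; this second route has the advantage of being manifestly regular across the horizons $\mathscr{H}$.

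The one delicate point is the axis $A=\{\theta=0,\pi\}$, where $W$ vanishes identically and the decomposition $TK^*=\Pi\oplus\Pi^\perp$ degenerates. I would dispose of this by continuity: both sides of $K=\rho^2\,\mathbf{g}(\gamma'_\perp,\gamma'_\perp)$ extend continuously and $K$ is a constant of motion, so the identity and its consequences, proved on the open dense set $\{\sin\theta\neq 0\}$, persist on $A$. Concretely, a null geodesic meeting $A$ has $L=0$, hence $\mathbb{D}(\theta)=-aE\sin^2\theta$ and the $\theta$-equation becomes $\rho^4\theta'^2=K-a^2E^2\sin^2\theta$; evaluated at a point of $A$ this gives $K=\rho^4\theta'^2\ge 0$, vanishing there iff $\theta'=0$, in agreement with principality. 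Apart from this bookkeeping the proof is the displayed computation, so the only genuine work is verifying the null-frame relations $\mathbf{g}(l,l)=\mathbf{g}(n,n)=0$, $\mathbf{g}(l,n)=-1$ together with $\mathbf{g}(W,W)=\rho^2\sin^2\theta$, which is where I expect whatever small obstacle there is to lie.
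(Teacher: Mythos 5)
Your proof is correct, and it is essentially the argument behind the result as cited: the paper itself gives no proof (it quotes Corollary $4.2.8(2)$ of \cite{KBH_book}), and the standard proof there rests on exactly your identity $K=\rho^2\,\mathbf{g}(\gamma'_\perp,\gamma'_\perp)$ for $q=0$, obtained from the null frame $\{l,n\}$ of $\Pi$ (or, equivalently, from the $\theta$-equation), with positivity and the equality case read off from the spacelikeness of $\Pi^\perp$. Your frame relations $\mathbf{g}(l,l)=\mathbf{g}(n,n)=0$, $\mathbf{g}(l,n)=-1$, $\mathbf{g}(W,W)=\rho^2\sin^2\theta$ check out, and the continuity argument handling the axis (and, via the $\theta$-equation form, the horizons) is sound.
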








\begin{defn}
A null geodesic is called a restphoton if it lies in $\mathcal{H}$.
\end{defn}

Restphotons are integral curves of $V|_{\mathcal{H}}$ by Prop.  \ref{H is closed totally geod}.


\begin{prop}\label{restphotons are unbounded}
Restphotons are unbounded in $K^*$.
\end{prop}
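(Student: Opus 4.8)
The plan is to use the explicit form of the restphoton trajectory together with the time-orientation structure on $\mathscr{H}$. Recall that by Prop.~\ref{H is closed totally geod}, a restphoton $\gamma$ is an integral curve of $V|_{\mathscr{H}} = (r_\pm^2+a^2)\partial_t + a\partial_\phi$, where $r = r_\pm$ is constant along $\gamma$. Hence, in Kerr-star coordinates restricted to one of the horizon components $\mathscr{H}_\pm = \{r = r_\pm\}$, the $t^*$-component satisfies $dt^*/ds = \mathbf{g}(\text{something})$; more directly, since $V$ is a constant-coefficient combination of the coordinate Killing fields $\partial_t$ and $\partial_\phi$, and since $t^* = t + \mathcal{T}(r)$ differs from $t$ only by a function of $r$ (which is constant on $\mathscr{H}_\pm$), the curve $\gamma$ has $t^*(s) = t^*(0) + (r_\pm^2+a^2)\,s$ and $\phi^*(s) = \phi^*(0) + a\,s \pmod{2\pi}$, with $\theta$ constant. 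The key observation is that $r_\pm^2 + a^2 > 0$, so $t^*$ is a strictly monotone (in fact affine, unbounded) function of the affine parameter $s \in \mathbb{R}$.

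The main step is then to check that $\gamma$ is complete, i.e.\ defined for all $s \in \mathbb{R}$: since $\gamma$ stays on the closed totally geodesic hypersurface $\mathscr{H}_\pm$, which is a properly embedded submanifold of $K^*$ away from nothing (the horizon does not meet $\Sigma$), and since the integral curve of the smooth vector field $V|_{\mathscr{H}}$ has locally bounded coordinate velocity with $r,\theta$ fixed and $t^*,\phi^*$ moving at constant rate, there is no finite-parameter escape; the curve extends to all of $\mathbb{R}$. Consequently $t^*(s) \to \pm\infty$ as $s \to \pm\infty$.

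To conclude unboundedness, I would argue that the function $t^*$ is proper on compact subsets in the relevant sense, or more simply: any compact subset $\mathcal{C} \subset K^*$ has $t^*(\mathcal{C})$ bounded (since $t^* = t^* \circ \xi^*$ is continuous, or one invokes that $t^*$ extends to a globally defined continuous function on $K^*$ — indeed $t^*$ is one of the global Kerr-star coordinates on $K^* \setminus A$, and restphotons with $\theta \neq 0,\pi$ avoid the axis while those on the axis can be handled by noting $V$ is still well-defined there). Since $t^*(\gamma(s)) = t^*(\gamma(0)) + (r_\pm^2+a^2) s$ is unbounded over $s \in \mathbb{R}$, the image $\gamma(\mathbb{R})$ cannot lie in any compact subset of $K^*$. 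Hence restphotons are unbounded.

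\textbf{Anticipated obstacle.} The delicate point is completeness of the restphoton: one must ensure the integral curve of $V|_{\mathscr{H}}$ is defined for all affine parameter values and does not, say, run off toward the ring singularity $\Sigma$ or fail to be inextendible for some coordinate reason on the axis $A \cap \mathscr{H}$. This should follow from $\mathscr{H}$ being closed in $K^*$ and $V|_{\mathscr{H}}$ being a complete vector field on it (its flow is essentially a rotation in $\phi^*$ combined with a translation in $t^*$, which is globally defined), but making this rigorous — particularly the behaviour on the axis where the $\phi^*$-coordinate degenerates — is the step that needs genuine care rather than routine checking. A clean alternative that sidesteps completeness is: even on a finite parameter interval, if the restphoton were bounded it would have to be \emph{extendible} to a closed or inextendible bounded geodesic, but monotonicity of $t^*$ already rules out closure, and the later Prop.~\ref{prop unbounded geodesics approaching horizons} / Cor.~\ref{incomplete are unbounded} machinery then finishes the incomplete case — though invoking those here may be circular depending on the paper's logical order, so I would prefer the direct completeness argument.
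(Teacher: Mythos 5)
Your proposal is correct in substance and rests on the same key idea as the paper: the $t^*$-coordinate is unbounded along the integral curve of $V|_{\mathscr{H}}$, and since $t^*$ is a globally defined continuous function on $K^*$, an unbounded $t^*$-range rules out containment in a compact set. Two points of comparison and caution, though. First, your justification that $t^*$ advances linearly via ``$t^*=t+\mathcal{T}(r)$ with $\mathcal{T}(r)$ constant on $\mathscr{H}_\pm$'' is not literally valid: $\mathcal{T}$ satisfies $d\mathcal{T}/dr=(r^2+a^2)/\Delta(r)$ and diverges at $r=r_\pm$, and the Boyer--Lindquist functions $t,\phi$ do not extend to the horizon at all. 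The correct (and easy) justification is that the vector fields $\partial_t,\partial_\phi$ extend over $\mathscr{H}$ and coincide there with the Kerr-star coordinate fields $\partial_{t^*},\partial_{\phi^*}$, so $dt^*(V|_{\mathscr{H}})=r_\pm^2+a^2>0$ directly. Second, your ``completeness'' step conflates the flow parameter of $V$ with the affine parameter of the geodesic: restphotons are only \emph{pregeodesics} as integral curves of $V$ (Prop.~\ref{H is closed totally geod}), and in the affine parametrization used by the paper (via Lemma $3.3.1$ of O'Neill) one has $t^*(\gamma_\pm(s))=(r_\pm^2+a^2)\ln(k_\pm s)/k_\pm$ on the half-line $(0,+\infty)$, so the restphoton is in fact an \emph{incomplete} geodesic; your claim that it is ``defined for all $s\in\mathbb{R}$'' is true only for the flow parametrization of $V$. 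Fortunately this does not damage your argument: boundedness concerns only the image of the curve, the image of the maximal restphoton is the full integral curve of $V|_{\mathscr{H}}$, and along that curve $t^*$ ranges over all of $\mathbb{R}$, which is exactly the paper's conclusion reached with the explicit affine formula. So the anticipated obstacle you flag (completeness) is not actually needed; what needs care is only the identification of $dt^*(V)$ on the horizon.
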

\begin{proof}
Let $\gamma_{\pm}$ be the integral curve of $V|_{\mathcal{H}_{\pm}}$.  By Lemma $3.3.1$ of \cite{KBH_book},  the time component of $\gamma_{\pm}$ is

\begin{align*}
t^*\big( \gamma_{\pm}(s) \big)=(r_{\pm}^2+a^2)\frac{\ln(k_{\pm}s)}{k_{\pm}},
\end{align*}
where $k_{\pm}=(r_{\pm}-r_{\mp})/2$ and $\gamma_{+}:(0,+\infty)\rightarrow K^*$ and $\gamma_{-}:(-\infty,0)\rightarrow K^*$.  Then 

\begin{align*}
\lim_{s\rightarrow\pm\infty} t^{*} (\gamma_{\pm}(s))=\pm\infty.
\end{align*}

\end{proof}




\begin{prop}[\cite{KBH_book},  Lemma $4.2.9(3,4)$]\label{E=L=K=0 prop}
For a null geodesic $\gamma$,
\begin{enumerate}
\item $K=L=0$ but $E\neq 0 \Leftrightarrow \gamma\in A\setminus\mathcal{H}$. 
\item $K=L=E=0 \Leftrightarrow$ $\gamma$ is a restphoton.
\end{enumerate}
\end{prop}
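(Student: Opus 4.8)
The plan is to combine the characterization of principal null geodesics in Prop.~\ref{eq principal nulls} ($K=0\iff\gamma$ principal) with the orthogonal splitting $\gamma'=\gamma'_\Pi+\gamma'_\perp$ and the elementary inner products
\[
\mathbf{g}(\partial_r,V)=0,\qquad \mathbf{g}(V,\partial_t)=-\Delta(r),\qquad \mathbf{g}(V,V)=-\Delta(r)\rho^2(r,\theta),
\]
which follow from a direct computation with \eqref{kerr metric}. I will also use that $\tilde{\partial_\phi}\equiv 0$ on $A$ (so $L=0$ for any geodesic whose image lies in the axis), that $\mathbf{g}(\gamma',\partial_\theta)=\rho^2(r,\theta)\,\theta'$ and $\mathbf{g}(\gamma',W)=\mathbb{D}(\theta)$, and the expression $\Theta(\theta)=Q+a^2E^2\cos^2\theta$ on $A$ recorded in \S\ref{study of geodesic equations}.

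First I would dispose of restphotons. By Prop.~\ref{H is closed totally geod} the velocity of a restphoton $\gamma$ is proportional to $V|_{\mathscr{H}}$, hence $\gamma'\in\Pi$, so $\gamma$ is principal and $K=0$ by Prop.~\ref{eq principal nulls}. Since $\Delta\equiv0$ on $\mathscr{H}$, the identities above give $E=-\mathbf{g}(\gamma',\partial_t)=0$ and $\mathbb{P}=-\mathbf{g}(\gamma',V)=0$; as $\mathbb{P}=(r^2+a^2)E-La$ and $a\neq0$, also $L=0$. This proves the implication ``$\Leftarrow$'' in (2) and records that every restphoton has $E=0$, a fact used repeatedly below.

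Conversely, suppose $K=L=E=0$ for a (non-constant) null geodesic $\gamma$. Then $\gamma$ is principal, and $\mathbb{P}(r)=(r^2+a^2)E-La\equiv0$, so $R(r)=\mathbb{P}^2(r)-\Delta(r)K\equiv0$; the radial equation $\rho^4 r'^2=R(r)$ forces $r\equiv r_0$ constant. If $\Delta(r_0)\neq0$ we sit inside a BL block, where principality means $\gamma'=\alpha\partial_r+\beta V$; from $r'=\alpha=0$ and nullity $0=q=\beta^2\mathbf{g}(V,V)=-\beta^2\Delta(r_0)\rho^2$ we get $\beta=0$, so $\gamma'\equiv0$, absurd. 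Hence $\Delta(r_0)=0$, i.e. $\gamma\subset\mathscr{H}$, so $\gamma$ is a restphoton, giving ``$\Rightarrow$'' in (2). For (1): if the image of $\gamma$ lies in $A$ then $L=0$ and $\theta\equiv0$ or $\pi$, so $\theta'=0$ and $0=\Theta=Q+a^2E^2$, whence $K=Q+(L-aE)^2=Q+a^2E^2=0$; were $E=0$ as well, part (2) would make $\gamma$ a restphoton, contradicting $\gamma\not\subset\mathscr{H}$, so $E\neq0$. Conversely, if $K=L=0$ and $E\neq0$, then $\gamma$ is principal, so $\theta'=0$ and $\mathbb{D}(\theta)=L-Ea\sin^2\theta=0$; with $L=0$ and $E,a\neq0$ this forces $\sin\theta\equiv0$, so the image of $\gamma$ lies in $A$, and $\gamma$ cannot lie in $\mathscr{H}$ since restphotons have $E=0$.

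The only genuinely delicate point is passing from ``$\Delta(r(s_0))=0$ at a single parameter value'' to ``$\gamma$ is entirely contained in $\mathscr{H}$''; this is precisely what the identity $R(r)\equiv0$ (valid once $K=E=L=0$) delivers, either directly from $r'^2=R(r)/\rho^4$ or via Prop.~\ref{initial conditions and zeroes}. Everything else is bookkeeping with the constants of motion $E,L,\mathbb{P},\mathbb{D},K,Q$ together with the two auxiliary inner products of the canonical field $V$.
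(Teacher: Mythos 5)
Your argument is correct, but note that the paper does not actually prove this proposition: it is quoted from O'Neill (\cite{KBH_book}, Lemma $4.2.9(3,4)$), so what you have written is a self-contained reconstruction rather than a variant of an argument appearing in the text. Your route -- combining the criterion $K=0\Leftrightarrow$ principal of Prop.~\ref{eq principal nulls} with the inner products $\mathbf{g}(V,\partial_t)=-\Delta$, $\mathbf{g}(V,V)=-\Delta\rho^2$, the vanishing $R\equiv\mathbb{P}^2-\Delta K$ forcing $r=\mathrm{const}$, and the nullity argument killing the $V$-component off the horizon -- is essentially the standard one, and the computations check out. Two steps should be made explicit. First, when you call a restphoton ``principal'' and invoke Prop.~\ref{eq principal nulls}, remember that $\Pi=\mathrm{span}\{\partial_r,V\}$ is defined with the Boyer--Lindquist field $\partial_r$, which does not extend to $\mathscr{H}$; either observe that $\mathrm{span}\{\partial_r,V\}=\mathrm{span}\{\partial^*_r,V\}$ off the horizon and that the latter is a nondegenerate timelike plane field on all of $K^*$, so that principality and the criterion of Prop.~\ref{eq principal nulls} carry over to $\mathscr{H}$ by continuity, or bypass principality altogether and get $K=0$ for restphotons from the $\theta$-equation of \eqref{geodes diff equations}, which the paper notes extends over the horizons: having already shown $E=L=0$ (hence $\mathbb{D}\equiv 0$) and $\theta'\equiv 0$ along an integral curve of $V|_{\mathscr{H}}$, one reads off $0=\rho^4\theta'^2=\Theta(\theta)=K$. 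Second, in the direction ``axial $\Rightarrow K=L=0$'' you use the $\theta$-equation along a geodesic lying entirely in $A$, whereas \S\ref{study of geodesic equations} only justifies the extension for geodesics crossing $A$; the cleanest fix is to note that on $A$ one has $\tilde{\partial_\phi}=0$, hence $V=(r^2+a^2)\partial_t$ and $T_pA=\mathrm{span}\{\partial_t,\partial_r\}=\Pi$, so axial null geodesics are principal and $K=0$ follows at once from Prop.~\ref{eq principal nulls}, with $E\neq 0$ then obtained exactly as you argue via part (2). Neither point is a genuine gap -- both are continuity remarks available within your own toolkit -- and the proof is otherwise complete.
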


\subsection{Null geodesics with $Q<0$}

\begin{prop}\label{Q<0 condition}
Let $\gamma$ be a null geodesic with $Q<0$.  Then

\begin{enumerate}
\item $\gamma$ does not intersect $Eq=\{\theta=\pi/2\}$;
\item $a^2E^2> L^2$ and in particular $E\neq 0$.
\end{enumerate}
\end{prop}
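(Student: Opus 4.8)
The plan is to work entirely with the fourth first-order equation of Proposition \ref{differential equations of geodessics}, namely $\rho^4(r,\theta)\,\theta'^2 = \Theta(\theta)$, together with the non-negativity $\Theta(\theta)\ge 0$ along $\gamma$ from Remark \ref{non negativitivity of polynomials}. Since $\gamma$ is null, its Lorentzian energy $q=\mathbf{g}(\gamma',\gamma')$ vanishes, so along $\gamma$
\[
\Theta(\theta)=Q+\cos^2\theta\Big(a^2E^2-\frac{L^2}{\sin^2\theta}\Big),
\]
which, on the axis $A=\{\theta=0,\pi\}$ (where $L=0$), reduces to the regular expression $\Theta(\theta)=Q+a^2E^2\cos^2\theta$ recorded in \S\ref{study of geodesic equations}. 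Both parts will be read off from the sign of this polynomial.

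For (1), I would argue by contradiction: were $\gamma$ to meet $Eq=\{\theta=\pi/2\}$, then at such a point $\cos^2\theta=0$ and hence $\Theta(\pi/2)=Q<0$, contradicting $\Theta\ge 0$ on $\gamma$. Thus $\gamma$ stays away from $Eq$, and in particular $\cos^2\theta>0$ at every point of $\gamma$; this last consequence is precisely what makes part (2) go through.

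For (2), I would distinguish the cases $L=0$ and $L\neq 0$. If $L=0$, I evaluate $\Theta=Q+a^2E^2\cos^2\theta\ge 0$ at any point of $\gamma$; since $\cos^2\theta\le 1$ and $a^2E^2\ge 0$ this gives $a^2E^2\ge a^2E^2\cos^2\theta\ge -Q>0=L^2$. If $L\neq 0$, then $\gamma$ never touches $A$, because $L$ is constant along $\gamma$ and vanishes wherever $\tilde{\partial_\phi}=0$; hence $0<\sin^2\theta\le 1$ along $\gamma$, and evaluating $\Theta\ge 0$ at any point, dividing by $\cos^2\theta>0$ (legitimate by part (1)) and using $\sin^2\theta\le 1$ yields $a^2E^2>L^2/\sin^2\theta\ge L^2$. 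In both cases $a^2E^2>L^2\ge 0$, so in particular $E\neq 0$.

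I expect no genuine obstacle here; the only delicate point is the behaviour at the axis, where the term $L^2/\sin^2\theta$ is formally singular, and this is handled by noting that $L=0$ there so that the regularized form of $\Theta$ from \S\ref{study of geodesic equations} applies — equivalently, by isolating the case $L\neq 0$, in which $\gamma$ cannot reach $A$ at all.
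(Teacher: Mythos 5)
Your proof is correct and follows essentially the same route as the paper: both arguments rest entirely on the non-negativity of $\Theta(\theta)$ from the $\theta$-equation, with $Q<0$ forcing $\cos^2\theta\neq 0$ and $a^2E^2>L^2/\sin^2\theta\geq L^2$. The only cosmetic difference is that you split on $L=0$ versus $L\neq 0$ while the paper splits on $\gamma\cap A=\emptyset$ versus $\gamma\cap A\neq\emptyset$; these are equivalent ways of handling the formal singularity of $L^2/\sin^2\theta$ at the axis.
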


\begin{proof}
If $\gamma\cap A=\emptyset$,  then from the $\theta$-equation of \eqref{geodes diff equations} we have

\begin{align*}
\cos^2\theta [L^2/\sin^2\theta-a^2E^2]=Q-\rho^4(r,\theta)\theta'^2<0.
\end{align*}
Hence $\cos^2\theta\neq 0$ and $L^2/\sin^2\theta-a^2E^2<0$,  hence $\gamma\cap Eq=\emptyset$ and $a^2E^2>L^2$,  so $E\neq 0$.

If $\gamma\cap A\neq\emptyset$,  then by Prop.  \ref{E=L=K=0 prop} $L=0$ and 

\begin{align*}
-a^2E^2\cos^2\theta =Q-\rho^4(r,\theta)\theta'^2<0.
\end{align*}
Therefore $\cos^2\theta\neq 0$ and $a^2E^2>0$,  so $\gamma\cap Eq=\emptyset$ and $E\neq 0$.
\end{proof}

\begin{oss}
If a geodesic $\gamma$ has $Q>0$,  then its $\theta$-motion is an oscillation around $\theta=\pi/2$,  so it cuts repeatedly through $Eq$ (see Propositions $4.5.4,  4.5.5$ in \cite{KBH_book}). 
\end{oss}

\begin{prop}\label{geod Q<0}
For $Q<0$ null geodesics,  $R(r)$ is convex and has either no roots or two negative roots. 
\end{prop}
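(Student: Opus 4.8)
The plan is to read the qualitative behaviour of $R(r)$ directly off its explicit polynomial form in Proposition \ref{differential equations of geodessics}, using the two facts already established for $Q<0$ null geodesics: $a^2E^2>L^2$ from Proposition \ref{Q<0 condition}, and $K\geq 0$ from Proposition \ref{eq principal nulls}.

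First I would specialise $R$ to the null case $q=\mathbf{g}(\gamma',\gamma')=0$, which kills the $r^3$ term and gives
\[
R(r)=E^2 r^4+\mathfrak{X}(E,L,Q)\,r^2+2MK(E,L,Q)\,r-a^2Q,
\]
with $\mathfrak{X}(E,L,Q)=a^2E^2-L^2-Q$ and $K(E,L,Q)=Q+(L-aE)^2$. By Proposition \ref{Q<0 condition}, $a^2E^2>L^2$ (so in particular $E\neq 0$), and since $Q<0$ both summands in $\mathfrak{X}(E,L,Q)=(a^2E^2-L^2)+(-Q)$ are strictly positive, hence $\mathfrak{X}(E,L,Q)>0$.

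Next I would differentiate twice: $R''(r)=12E^2r^2+2\mathfrak{X}(E,L,Q)>0$ for every $r\in\mathbb{R}$, because $E\neq 0$ and $\mathfrak{X}(E,L,Q)>0$. Hence $R$ is strictly convex on $\mathbb{R}$; being a quartic with positive leading coefficient it tends to $+\infty$ at $\pm\infty$, so it has either no real root or exactly two (with the tangential double-root case as the limiting situation). To see that any such root is negative I would evaluate at $r=0$: since $a\neq 0$ and $Q<0$ we get $R(0)=-a^2Q>0$, and since $\gamma$ is null Proposition \ref{eq principal nulls} gives $K(E,L,Q)\geq 0$, so $R'(0)=2MK(E,L,Q)\geq 0$ (recall $M>0$). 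Strict convexity makes $R'$ strictly increasing, so $R'(r)>R'(0)\geq 0$ for all $r>0$; thus $R$ is strictly increasing on $[0,+\infty)$, and together with $R(0)>0$ this yields $R(r)>0$ for all $r\geq 0$. Therefore every real root of $R$ lies in $(-\infty,0)$, which is the assertion.

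The computation is routine; the only point needing care is that both structural inputs — strict positivity of $\mathfrak{X}$, which produces the convexity, and $R'(0)\geq 0$, which prevents $R$ from becoming nonpositive for $r\geq 0$ — are precisely the previously proved facts $a^2E^2>L^2$ and $K\geq 0$ for $Q<0$ null geodesics, so no real obstacle is expected beyond this bookkeeping.
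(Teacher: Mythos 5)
Your proposal is correct and follows essentially the same route as the paper: the null-case form of $R(r)$, positivity of $\mathfrak{X}=a^2E^2-L^2-Q$ via Prop.~\ref{Q<0 condition}, $K\geq 0$ via Prop.~\ref{eq principal nulls}, and convexity from $R''=12E^2r^2+2\mathfrak{X}>0$. The only (immaterial) difference is that you exclude non-negative roots via $R(0)>0$, $R'(0)\geq 0$ and monotonicity of $R'$, whereas the paper reads the same conclusion off the signs of the coefficients; both arguments use exactly the same inputs.
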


\begin{proof}
First we show that $R(r)$ does not have zeroes in $r\geq 0$.  It is sufficient to show that the coefficient of $R(r)$ at $r^0$ is positive and every other coefficient is non-negative.  We know from Prop. \ref{differential equations of geodessics} that for null geodesics

\begin{align}
R(r)=E^2r^4 + \mathfrak{X} r^2 + 2MKr - a^2 Q.
\end{align} 

From Prop.  \ref{Q<0 condition} and Prop.  \ref{eq principal nulls},  we see that the coefficients are

\begin{itemize}
\item  $E^2>0$ at $r^4$;
\item $ \mathfrak{X}=a^2E^2-L^2-Q>-Q>0$ at $r^2$;
\item $K\geq 0$ at $r$;
\item $-a^2Q>0$ at $r^0$.
\end{itemize}

We now show that $R(r)$ is convex and hence has at most two real zeroes.  We have $R'=4E^2r^3+2\mathfrak{X}r+2MK$, hence $R''=12E^2r^2+2\mathfrak{X}$. Since $\mathfrak{X}>0$,  $R''$ never vanishes and $R''(0)=2\mathfrak{X}>0$.  Then $R''>0$ everywhere,  which means that $R$ is convex,  hence $R'$ has a unique zero.  This means that $R(r)$ has a unique critical point and hence it has either $0$ or  $2$ roots,  which have to be negative.

\begin{figure}[H]
\centering
\includegraphics[scale=0.7]{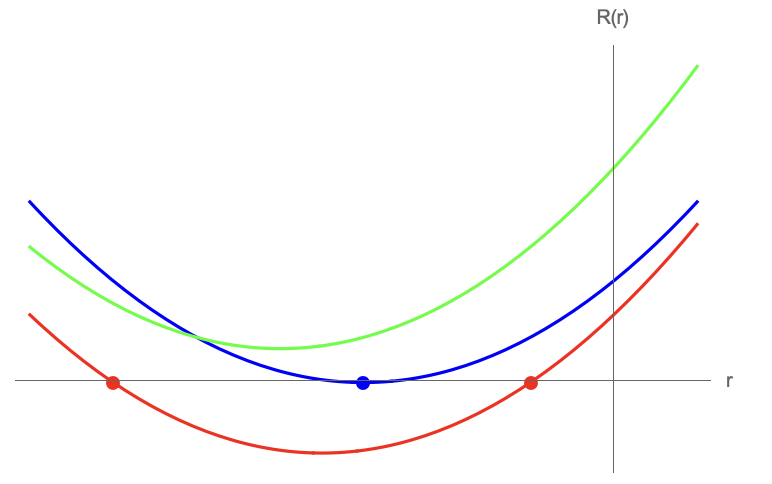} 
\caption{For $Q<0$ null geodesics,  $R(r)$ has either zero or two negative roots (multiplicity two is allowed). \\}
\vspace*{-5mm}
\end{figure}

\end{proof}

\section{\textbf{Proof of Theorem  \ref{main theorem}}} \label{section: main theorem}

\subsection{Strategy of the proof} \label{strategy of the proof}

Let $\gamma\colon I \to K^*$ be a closed null geodesic (CNG).
Since the radius function $r\colon K^*\to\mathbb{R}$ is everywhere smooth the composition $r\circ \gamma$ has at least two critical points $s_0<s_1$ in each period $[a,a+T)$, i.e.  
$(r\circ \gamma)'(s_0)=(r\circ \gamma)'(s_1)=0$. Since $\rho\colon K^*\to\mathbb{R}$  does not vanish on $K^*$ the differential equation for $r\circ \gamma$
\[
(\rho\circ \gamma)^4[(r\circ \gamma)']^2=R(r\circ \gamma)
\]
implies that $R(r\circ \gamma(s_{0,1}))=0$.  Because of the differential equation,  the geodesic motion must happen in the $r$-region on which $R(r\circ \gamma)\geq 0$.  Further since $R$ is a polynomial in $r$ we can distinguish two cases:
\begin{enumerate}

\item The zeros $r\circ \gamma(s_{0,1})$ of $R$ are simple, i.e. $dR/dr\neq 0$ at these points. Then $r\circ \gamma(s_{0,1})$ are turning points of $r\circ \gamma$, i.e. 
$(r\circ \gamma)'$ changes its sign at $s_0$ and $s_1$.  

\item One of the zeros $r\circ \gamma(s_{0})$ or $r\circ \gamma(s_{1})$ is a higher order zero of $R$. Then $r\circ \gamma$ is constant.
\end{enumerate}
Both the two facts follow from Proposition \ref{initial conditions and zeroes}.  

Most possible CNGs can be ruled out by comparing the location of the zeros of $R(r)$ with the following consequence of the causal structure of Kerr:

\begin{lem}\label{lemma about closed curves confined}
Let $\gamma\colon  I\to K^*$ be a closed null geodesic. Then $r\circ \gamma\subset \{r<r_-\}$.
\end{lem}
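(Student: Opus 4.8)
The plan is to derive the confinement from two ingredients already available: the causality of the region $\{r\ge r_-\}$ (Corollary~\ref{causal region of K^*}) and the fact, encoded in Proposition~\ref{H is closed totally geod}, that a future-directed causal curve can cross the inner horizon $\mathscr{H}_-$ only towards decreasing $r$. After possibly reversing its parametrization, the closed null geodesic $\gamma$ is a future-directed closed causal curve; since $\{r\ge r_-\}$ is causal, $\gamma$ cannot be contained in it, so $r\circ\gamma$ takes some value $<r_-$, i.e. $\gamma$ enters block III. The assertion of the Lemma is then exactly that $\gamma$ never returns to $\{r\ge r_-\}$.

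First I would record how $r$ behaves along causal curves at $\mathscr{H}_-$. In the Kerr-star coordinate system $r$ is one of the coordinate functions and $\partial^*_r$ is the corresponding coordinate vector field, so $dr(\partial^*_r)=1$ and $dr(-\partial^*_r)=-1<0$: the future time-orientation vector $-\partial^*_r$ points in the direction of decreasing $r$. By Proposition~\ref{H is closed totally geod} the future hemicone of the null hypersurface $\mathscr{H}_-$ lies on the $-\partial^*_r$ side, that is, on the side $\{r<r_-\}$; consequently every future-directed causal vector $w$ based at a point of $\mathscr{H}_-$ satisfies $dr(w)\le 0$, with equality if and only if $w$ is a multiple of the null generator $V|_{\mathscr{H}_-}$. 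In particular, whenever $\gamma(s)\in\mathscr{H}_-$ one has $(r\circ\gamma)'(s)\le 0$.

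Then I would argue by contradiction. Assume $\gamma$ does reach $\{r\ge r_-\}$. The set $A:=\{s:(r\circ\gamma)(s)<r_-\}$ is an open subset of the closed loop parametrizing $\gamma$; it is nonempty since $\gamma$ enters block III, and it is a proper subset since $\gamma$ reaches $\{r\ge r_-\}$. Choose the forward endpoint $s_*$ of one of the connected arcs of $A$; then $(r\circ\gamma)(s)<r_-$ for $s$ immediately below $s_*$, while $s_*\notin A$, so by continuity $(r\circ\gamma)(s_*)=r_-$ and $\gamma(s_*)\in\mathscr{H}_-$. Since $\gamma$ is smooth near $s_*$ and $(r\circ\gamma)(s)<(r\circ\gamma)(s_*)$ for $s$ slightly below $s_*$, the derivative satisfies $(r\circ\gamma)'(s_*)\ge 0$; combined with the previous paragraph, $(r\circ\gamma)'(s_*)=0$, i.e. $\gamma'(s_*)\in\ker dr=T_{\gamma(s_*)}\mathscr{H}_-$. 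Being a nonzero null vector tangent to the null hypersurface $\mathscr{H}_-$, $\gamma'(s_*)$ is proportional to $V|_{\mathscr{H}_-}$ by Proposition~\ref{H is closed totally geod}; since $\mathscr{H}_-$ is totally geodesic, the geodesic $\gamma$ is then contained in $\mathscr{H}_-$. But then $\gamma$ is a closed causal curve lying in $\mathscr{H}_-\subset\{r\ge r_-\}$, contradicting Corollary~\ref{causal region of K^*} (equivalently, $\gamma$ would be a restphoton, which is excluded by Proposition~\ref{restphotons are unbounded}). Hence $\gamma$ never meets $\{r\ge r_-\}$, that is, $r\circ\gamma\subset\{r<r_-\}$.

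The main obstacle is the borderline geometry at $\mathscr{H}_-$ rather than any computation: the heuristic ``a future-directed causal curve that has entered block III cannot cross $\mathscr{H}_-$ outwards'' is clear for transversal crossings (and mirrors the proof of Corollary~\ref{causal region of K^*}), but must be made to cover the case in which $\gamma$ is merely tangent to $\mathscr{H}_-$, and it is precisely there that one needs both the uniqueness of the null generator of $\mathscr{H}_-$ and the totally geodesic property in order to pin $\gamma$ onto $\mathscr{H}_-$ and then discard it. A minor technical point is to regard $r\circ\gamma$ as a function on the closed loop traced by $\gamma$, reparametrizing so that the chosen $s_*$ lies in the interior of the parameter interval, so that the one-sided monotonicity genuinely yields a vanishing derivative.
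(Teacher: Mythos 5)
Your argument is correct and follows essentially the same route as the paper: confinement is obtained from the causality of $\{r\ge r_-\}$ (Corollary \ref{causal region of K^*}) together with the impossibility of a closed null geodesic returning through the horizon, which in the paper is handled by simply citing Proposition \ref{oss event horizon}. The only difference is that instead of invoking that proposition you re-derive the relevant one-way barrier at $\mathscr{H}_-$ (the derivative-sign argument at the exit point, with tangential contact forcing the geodesic into $\mathscr{H}_-$ via the null generator $V$ and the totally geodesic property) — a sound but longer route that duplicates the content of Proposition \ref{oss event horizon}.
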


\begin{proof}
The region 
\[
\{r\geq r_{-}\}=\{ t^*\in\mathbb{R},  r\in[r_{-},+\infty),  (\theta,\phi^*)\in S^2\} \setminus \Sigma\subset K^*
\] 
is causal by Corollary  \ref{causal region of K^*} and closed null geodesics cannot intersect $\{r=r_{-}\}$ by Prop.  \ref{oss event horizon}. 
\end{proof}

\begin{prop}\label{prop spacelike foliation}
There are no closed null geodesics strictly contained in the region $\{0<r<r_{-}\}$.
\end{prop}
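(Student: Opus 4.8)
The plan is to prove the stronger statement that the region $U:=\{0<r<r_{-}\}$, an open subset of the Boyer--Lindquist block III, admits a smooth time function, hence contains no closed causal curve at all --- in particular no closed null geodesic. The time function will be the Boyer--Lindquist time $t=t^{*}-\mathcal{T}(r)$: although $\mathcal{T}$ has logarithmic singularities at the horizons, it is smooth on $(0,r_{-})$, so $t$ is well-defined and smooth on $U$ (it also extends over the axis, since $t^{*}$ and $\mathcal{T}(r)$ do). Thus the ``spacelike foliation'' in the statement will be the foliation of $U$ by the level sets $\{t=\text{const}\}$.

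The heart of the argument is to show that $\nabla t$ is timelike on $U$, \emph{i.e.}\ that the level sets of $t$ are spacelike. I would compute $\mathbf{g}(\nabla t,\nabla t)=g^{tt}$ as follows: since $dt$ annihilates $\partial_{r}$ and $\partial_{\theta}$, the gradient lies in $\mathrm{span}\{V,W\}$, and pairing with the mutually orthogonal fields $V,W$ (using $\mathbf{g}(V,V)=-\Delta\rho^{2}$, $\mathbf{g}(W,W)=\rho^{2}\sin^{2}\theta$, $V(t)=r^{2}+a^{2}$, $W(t)=a\sin^{2}\theta$) gives
\[
\nabla t=-\frac{r^{2}+a^{2}}{\Delta\rho^{2}}\,V+\frac{a}{\rho^{2}}\,W,\qquad \mathbf{g}(\nabla t,\nabla t)=-\frac{(r^{2}+a^{2})^{2}-a^{2}\Delta\sin^{2}\theta}{\rho^{2}\Delta}.
\]
On $U$ one has $\Delta>0$, $\rho^{2}\geq r^{2}>0$, and $0<\Delta<r^{2}+a^{2}$, so
\[
(r^{2}+a^{2})^{2}-a^{2}\Delta\sin^{2}\theta\ \geq\ (r^{2}+a^{2})^{2}-a^{2}\Delta\ =\ r\bigl(r^{3}+a^{2}r+2Ma^{2}\bigr)>0,
\]
whence $\mathbf{g}(\nabla t,\nabla t)<0$ on $U$. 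This is where the hypothesis $r>0$ enters in an essential way, and it is the main obstacle of the proof: for $r<0$ the numerator $(r^{2}+a^{2})^{2}-a^{2}\Delta\sin^{2}\theta$ changes sign, which is precisely the source of Kerr's causality violations, so this positivity is what makes $\{0<r<r_{-}\}$ special.

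It remains to fix the time orientation. Since $\mathbf{g}(\nabla t,V)=V(t)=r^{2}+a^{2}>0$ and $V$ is future-directed timelike on block III by the definition of the time orientation, $\nabla t$ is past-directed; hence $t$ is a smooth time function on $U$, \emph{i.e.}\ $t\circ\gamma$ is strictly increasing for every future-directed causal curve $\gamma$ contained in $U$. Finally, suppose $\gamma\colon I=[a,b]\to K^{*}$ is a closed null geodesic with $\gamma(I)\subset U$. Its velocity is a nowhere-vanishing null field parallel along $\gamma$, hence future- or past-directed throughout; reversing the parameter if necessary we may take it future-directed, and then $t\circ\gamma$ is strictly increasing, contradicting $\gamma(a)=\gamma(b)$. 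The only routine points left to verify are the three metric identities quoted above and the smoothness of $t$ on $U$.
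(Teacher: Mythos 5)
Your proposal is correct and rests on the same key fact as the paper's proof: the level sets of the Boyer--Lindquist time $t$ are spacelike on $\{0<r<r_{-}\}$, with the positivity of $r$ entering in exactly the same way. The only cosmetic differences are that you verify spacelikeness by computing $g^{tt}<0$ while the paper exhibits the orthogonal spacelike frame $\partial_r,\partial_\theta,\partial_\phi$, and that you conclude via strict monotonicity of $t$ along future-directed causal curves while the paper concludes via a critical point of $t\circ\gamma$, at which the null velocity would be tangent to a spacelike hypersurface.
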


\begin{proof}
First we claim that the hypersurfaces $\mathcal{N}_t:=\{t=\textrm{const}\}\cap\{0<r<r_{-}\}\subset K^*$ are spacelike.  Indeed,  if $p\in\mathcal{N}_t\setminus A$,  where $A=\{\theta=0,\pi\}$,  then $T_p\mathcal{N}_t$ is spanned by $\partial_r,\partial_\theta,\partial_\phi$ which are spacelike and orthogonal to each other.  If $p\in A\subset\mathcal{N}_t$,  then $p=(t,r,q)$ with $q=(0,0,\pm 1)\in S^2\subset \mathbb{R}^3$,  and we may replace $\partial_\theta,\partial_\phi$ by any basis of $T_qS^2$.  Suppose by contradiction that there exist a CNG $\gamma$ in $\{0<r<r_{-}\}$.  Then there would exist $t_0,s_0$ such that $\gamma'(s_0)\in T_{\gamma(s_0)}\mathcal{N}_{t_0}$.  This is a contradiction since $\gamma'(s_0)$ is null.
\end{proof}

\hspace{1cm}

There are two cases in which we need additional arguments:

\begin{enumerate}
\item to exclude CNGs with $E=0,\; K(0,L,Q)=0,\;  L\neq 0$,  we use that the $t$-coordinate is monotonically increasing/decreasing;

\item to exclude CNGs with $Q<0$ and $r=\textrm{const}<0$,  we show that the $\theta$-coordinate of such a geodesic is periodic whereas the $t$-coordinate is quasi-periodic with a non-zero increment,  see \ref{case Q<0}.
\end{enumerate}

\subsection{Null geodesics in the horizons and axis}

First we rule out CNGs entirely contained in the axis $A=\{\theta=0,\pi\}$ and CNGs entirely contained/intersecting  the horizon $\mathscr{H}=\{r=r_{\pm}\}$.

\subsubsection*{The case of the horizon $\mathscr{H}=\{r=r_{\pm}\}$}

\begin{prop}\label{oss event horizon}
There are no CNGs intersecting $\mathscr{H}=\{r=r_{\pm}\}$.
\end{prop}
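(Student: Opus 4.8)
The plan is to argue entirely from the causal geometry of $\mathscr{H}$, without ever touching the radial polynomial $R$. So let $\gamma\colon[a,b]\to K^{*}$ be a closed null geodesic and suppose, towards a contradiction, that $\gamma$ meets $\mathscr{H}$. First I would normalise: since $\gamma'$ is null and nowhere zero, its time orientation is constant along $\gamma$, so after reversing $\gamma$ if necessary I may assume it is future directed. Next I would dispose of the case in which $\gamma$ is \emph{tangent} to $\mathscr{H}$ at one of its points $\gamma(s_{0})\in\mathscr{H}$: then $\gamma'(s_{0})\in T_{\gamma(s_{0})}\mathscr{H}$ is null, and since $V|_{\mathscr{H}}$ spans the unique null tangent line of the null hypersurface $\mathscr{H}$ (Prop.~\ref{H is closed totally geod}), $\gamma'(s_{0})$ is proportional to $V|_{\gamma(s_{0})}$; as the integral curves of $V|_{\mathscr{H}}$ are null pregeodesics, $\gamma$ is an affine reparametrisation of such a curve, i.e.\ a restphoton. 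But a closed geodesic has compact image, whereas restphotons are unbounded in $K^{*}$ (Prop.~\ref{restphotons are unbounded}); contradiction. Hence every intersection of $\gamma$ with $\mathscr{H}$ is transverse.

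The crux is to show that $r$ is \emph{strictly decreasing} along $\gamma$ at every such transverse crossing. I would use that $\mathscr{H}_{\pm}=r^{-1}(r_{\pm})$ is a regular level set of the globally defined function $r$ on $K^{*}$ (in Kerr-star coordinates $\partial^{*}_{r}r=1$, so $dr\neq 0$), whence $T_{p}\mathscr{H}_{\pm}=\ker dr_{p}$ for $p\in\mathscr{H}_{\pm}$; moreover $-\partial^{*}_{r}$ is future directed null with $dr(-\partial^{*}_{r})=-1<0$, so that the ``$-\partial^{*}_{r}$ side'' of $\mathscr{H}_{\pm}$ is precisely the side $\{dr<0\}$. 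By Prop.~\ref{H is closed totally geod} the future causal half-cone at $p$ lies in the closed half-space $\{dr\le 0\}$, and meets the hyperplane $\{dr=0\}=T_{p}\mathscr{H}_{\pm}$ only along the generator direction $V$. Hence any future null vector at $p$ transverse to $\mathscr{H}_{\pm}$ is strictly inside $\{dr<0\}$; applied to $\gamma'(s_{0})$ this yields $(r\circ\gamma)'(s_{0})<0$ at every transverse intersection of $\gamma$ with $\mathscr{H}$.

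Finally I would derive the contradiction by an elementary intermediate-value argument. After shifting the base point of $\gamma$ along its loop (possible since $\gamma$ is not contained in $\mathscr{H}$, as shown above) I may assume $\gamma(a)=\gamma(b)\notin\mathscr{H}$, so that $f:=r\circ\gamma$ is continuous on $[a,b]$ with $f(a)=f(b)\notin\{r_{-},r_{+}\}$. If $\gamma$ meets some $\mathscr{H}_{\varepsilon}$, then $f$ attains the value $r_{\varepsilon}$, and at every $s$ with $f(s)=r_{\varepsilon}$ we have $\gamma(s)\in\mathscr{H}_{\varepsilon}$ and, by the monotonicity just established, $f'(s)<0$; in particular these $s$ are isolated, hence finitely many, say $s_{1}<\dots<s_{k}$ in $(a,b)$. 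Since $f$ passes strictly decreasingly through $r_{\varepsilon}$ at $s_{1}$ and $f\neq r_{\varepsilon}$ on $[a,s_{1})$, we get $f>r_{\varepsilon}$ on $[a,s_{1})$, so $f(a)>r_{\varepsilon}$; symmetrically $f<r_{\varepsilon}$ on $(s_{k},b]$, so $f(b)<r_{\varepsilon}$, contradicting $f(a)=f(b)$.

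I expect the second step to be the only genuine obstacle: the whole argument rests on correctly converting the qualitative statement of Prop.~\ref{H is closed totally geod}, that the future hemicone of $\mathscr{H}$ lies on the $-\partial^{*}_{r}$ side, into the quantitative monotonicity ``$r$ strictly decreases along any future causal curve crossing $\mathscr{H}$ transversally''. Once that is in place, the first step is a one-line application of Prop.~\ref{restphotons are unbounded} and the last step is routine. Note also that this mechanism treats $\mathscr{H}_{+}$ and $\mathscr{H}_{-}$ on exactly the same footing, which is why the proposition can be proved for all of $\mathscr{H}=\{r=r_{\pm}\}$ at once.
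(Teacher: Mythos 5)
Your proof is correct and follows essentially the same route as the paper: both arguments reduce the tangent case to non-closure of the integral curves of $V|_{\mathscr{H}}$ (you via unboundedness of restphotons, the paper via non-closure of the $V$-flowlines) and both handle the transverse case by the same geometric input, namely that the future hemicone along $\mathscr{H}$ lies on the $-\partial^*_r$ side so that a future-directed null geodesic can cross each component $\{r=r_\pm\}$ at most once and only inward. The only cosmetic difference is the endgame: you conclude with an intermediate-value argument for $r\circ\gamma$, while the paper invokes the parity of transverse intersections with a separating orientable hypersurface.
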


\begin{proof}

On the submanifolds $W:=\{r=\textrm{const}\}$, we have $\textrm{det}\;g^*|_W=-\rho^2(r,\theta)\Delta(r)\sin^2\theta$,  where $g^*$ is the Kerr-star coordinates expression of $\mathbf{g}$.  Therefore the metric $g^*$ degenerates on the tangent spaces to $\mathscr{H}=\{\Delta(r)=0\}=\{r=r_{\pm}\}$.  Therefore $T_p \mathscr{H}$ is a null subspace of $T_p K^*$ for every $p\in K^*$,  i.e.  the submanifolds $\mathscr{H}=\{\Delta(r)=0\}$ are null hypersurfaces.   Then,  by Lem.  $1.5.11$ of \cite{KBH_book},  every vector in $T_p K^*$ is spacelike,  except for the intersection of $T_p \mathscr{H}$ with the null cone of $T_p K^*$.  Note that the vector field $V=(r^2+a^2)\partial_t+a\partial_\phi$ is tangent to $\mathscr{H}$ and we have $\mathbf{g}(V,V)=-\Delta(r)\rho^2(r,\theta)$, i.e. $V$ is null along $\mathscr{H}$.  Hence $V|_\mathscr{H}$ generates the unique null tangent line to $H$. Further note that no flowline of $V$ closes.  By Prop.  \ref{H is closed totally geod},  the flowlines of $V|_\mathscr{H}$ are null pregeodesics,  hence the null geodesics tangent to $\mathscr{H}$ do not close. 

It remains to consider null geodesics intersecting $\mathscr{H}$ transversally.  Since each connected component $\{r=r_\pm\}$ of $\mathscr{H}$ is an orientable hypersurface separating the orientable manifold $K^*$,  every closed curve transversal to $H$ has to intersect $\{r=r_\pm\}$ an even number of times.  Further since $K^*$ is time-oriented by $-\partial_r^*$, all tangent vectors to a null geodesics transversal to $H$ have to lie on one side of $H$.  Therefore a null geodesic transversal to $H$ can intersect each connected component $\{r=r_\pm\}$ only once.  This shows that no null geodesic transversal to $\mathscr{H}$ can close.
\end{proof}

\subsubsection*{The case of the axis $A=\{\theta=0,\pi\}$}

\begin{prop} \label{no closed geodesics in axis}
There are no CNGs which are tangent at some point to $A=\{\theta=0,\pi\}$.  In particular,  there are no CNGs entirely contained in $A$.
\end{prop}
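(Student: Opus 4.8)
The plan is to reduce the statement to the case of a closed null geodesic \emph{entirely} contained in $A$, and then to rule this out by showing that its radial coordinate is strictly monotone, which is impossible for a closed curve. For the reduction, suppose $\gamma\colon I\to K^*$ is a closed null geodesic with $\gamma(s_0)\in A$ and $\gamma'(s_0)\in T_{\gamma(s_0)}A$ for some $s_0\in I$. By Lemma \ref{A and Eq closed totally geod subman} the axis is a \emph{closed} totally geodesic submanifold, so the set of $s\in I$ with $\gamma(s)\in A$ and $\gamma'(s)\in T_{\gamma(s)}A$ is open (by total geodesy and uniqueness of geodesics), closed (since $A$ and $TA$ are closed), and nonempty; hence it equals $I$, i.e.\ $\gamma(I)\subseteq A$. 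The ``in particular'' assertion is then just the special case $\gamma\subset A$, so it suffices to prove that no closed null geodesic is contained in $A$.

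So let $\gamma\subset A$ be a closed null geodesic. By Proposition \ref{oss event horizon} no closed null geodesic meets $\mathscr{H}$, hence $\gamma\subset A\setminus\mathscr{H}$. Along the axis the Killing field $\tilde{\partial_\phi}$ vanishes, so $L=L(\gamma)=0$; and by Proposition \ref{E=L=K=0 prop} the inclusion $\gamma\subset A\setminus\mathscr{H}$ forces $K=0$ and $E\neq0$. (Alternatively one sees $K=0$ directly: $\theta\circ\gamma$ is constantly $0$ or $\pi$, so $(\theta\circ\gamma)'\equiv0$, and the $\theta$-equation of \eqref{geodes diff equations} together with the identity $\Theta(\theta)=Q+a^2E^2\cos^2\theta$ valid on $A$ gives $0=Q+a^2E^2=K$.)

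Since $\gamma$ is null we have $q=\mathbf{g}(\gamma',\gamma')=0$, so with $K=0$ and $L=0$ the radial polynomial of Proposition \ref{differential equations of geodessics} reduces to
\[
R(r)=\Delta(r)\big[q r^2-K\big]+\mathbb{P}(r)^2=\big((r^2+a^2)E-La\big)^2=E^2(r^2+a^2)^2 ,
\]
which is strictly positive for every $r\in\mathbb{R}$, because $E\neq0$ and $a\neq0$. Since $\rho$ never vanishes on $K^*$, the radial equation $(\rho\circ\gamma)^4\big[(r\circ\gamma)'\big]^2=R(r\circ\gamma)$ then forces $(r\circ\gamma)'(s)\neq0$ for all $s$, so $r\circ\gamma$ has constant-sign derivative and is strictly monotone on $I=[a,b]$ with $a<b$; this contradicts $\gamma(a)=\gamma(b)$, which would give $r\circ\gamma(a)=r\circ\gamma(b)$. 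Hence no closed null geodesic is contained in $A$, completing the proof. The argument is short, and its only delicate points are the open--closed reduction (combining total geodesy of $A$, uniqueness of geodesics, and closedness of $A$) and the fact, already justified in \S\ref{study of geodesic equations}, that the $r$-equation of \eqref{geodes diff equations} remains valid along the axis, where the coordinate $\phi$ degenerates; after those, the evaluation of $R(r)$ is immediate.
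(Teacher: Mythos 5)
Your proof is correct and follows essentially the same route as the paper: reduce to $\gamma\subset A$ via total geodesy of the axis, deduce $L=K=0$ with $E\neq 0$, and conclude from $R(r)=E^2(r^2+a^2)^2>0$ that the radial coordinate is strictly monotone, so $\gamma$ cannot close. The only cosmetic difference is that you dispose of the $E=0$ possibility by invoking Prop.~\ref{oss event horizon} up front (so $\gamma\subset A\setminus\mathscr{H}$ and Prop.~\ref{E=L=K=0 prop}(1) gives $E\neq0$), whereas the paper treats $E=0$ as the restphoton case separately; both are fine.
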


\begin{proof}
First of all,  $A=\{\theta=0,\pi\}$ is a $2$-dimensional closed totally geodesic submanifold by Lem.  \ref{A and Eq closed totally geod subman}.    Hence if a geodesic $\gamma$ is tangent to $A$ at some point,   it will always lie on $A$.  By Prop.  \ref{E=L=K=0 prop},  if $\gamma\in A$,  then $L=K=0$.  Hence there are two possible cases:  if $E=0$,  by Prop.  \ref{E=L=K=0 prop},  then $\gamma$ is a restphoton,  \textit{i.e.} an integral curve of $V|_\mathscr{H}=(r_{\pm}^2+a^2)\partial_t+a\partial_\phi$,  which is not closed.  If $E\neq 0$,  then using \eqref{geodes diff equations},  we have

\begin{align*}
R(r)=E^2(r^2+a^2)^2>0.
\end{align*}
So $R(r)$ has no zeroes and therefore the geodesics cannot be closed.

\end{proof}

\subsection{Steps of the proof for other cases}\label{steps of other cases}

The proof splits into two main cases  $E=0$ and $E\neq 0$.\\  

\noindent If $E=0$ (\ref{section $E=0$}),  we analyse two subcases $K(0,L,Q)=0$ (\ref{subcase E=0,K=0}) and $K(0,L,Q)>0$ (\ref{subcase E=0,K>0}). 

\noindent If $E\neq 0$ (\ref{section $E div 0$}),  we analyse three subcases $Q=0$ (\ref{case Q=0}),  $Q>0$ (\ref{case Q>0}) and $Q<0$ (\ref{case Q<0}).\\

\begin{oss}
The only case which requires a detailed analysis of the differential equations is the case $E\neq 0$,  $Q<0$ (see \ref{case Q<0}).
\end{oss}

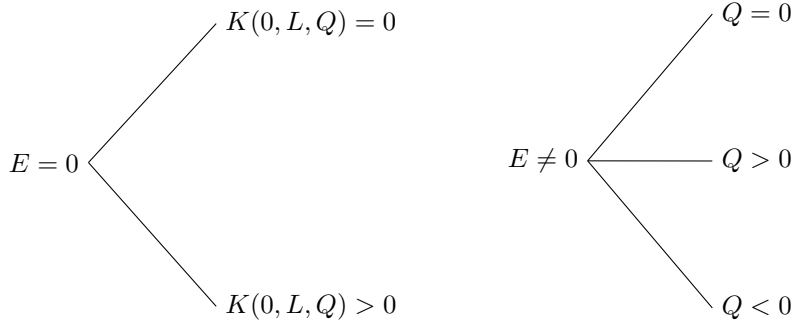
\begin{figure}[h]
\centering
\begin{tikzpicture}[grow=right]
\tikzset{level distance=100pt,sibling distance=90pt}
 \tikzset{frontier/.style={distance from root=200pt}}

\Tree 
               [.$E=0$ [.$K(0,L,Q)>0$ ] 
                       [.$K(0,L,Q)=0$ ] 
                       ]   
                         
\end{tikzpicture}
\hspace{1cm}
\begin{tikzpicture}[grow=right]
\tikzset{level distance=80pt,sibling distance=40pt}
 \tikzset{frontier/.style={distance from root=300pt}}

\Tree 
               [.$E\neq 0$  [.$Q<0$ ] 
               		              [.$Q>0$ ] 
                                    [.$Q=0$ ] 
                     ]

\end{tikzpicture}
\vspace*{5mm}
\caption{All the geodesic types which have to be studied.}
\label{figure steps of proof}
\vspace*{-5mm}
\end{figure}

\subsection{Case $E=0$}\label{section $E=0$}

\vspace{1cm}

From Prop.  \ref{differential equations of geodessics},  for null $(q=0)$ geodesics we have

\begin{align}
\label{R equation E=0}R(r)&=\mathfrak{X}(0,L,Q)r^2+2MK(0,L,Q)r-a^2Q\geq 0,\\
\label{theta equation E=0}\Theta(\theta)&=Q-\frac{\cos^2\theta}{\sin^2\theta}L^2\geq 0,
\end{align}
with $\mathfrak{X}(0,L,Q)=-(L^2+Q)$ and $K(0,L,Q)=L^2+Q$,  \textit{i.e.}  $\mathfrak{X}=-K$.  Notice that we must have $Q\geq 0$ by \eqref{theta equation E=0},  hence $K(0,L,Q)=L^2+Q\geq 0$,  as already known from Prop.  \ref{eq principal nulls}.
\vspace{1cm}

\subsubsection{Subcase $K(0,L,Q)=0$}\label{subcase E=0,K=0}

We have

\begin{align}
R(r)=-a^2Q\geq 0.
\end{align}
Since $Q\geq 0$,  we must then have $Q=0$.  From $\rho(r,\theta)^4r'^2=R(r)\equiv 0$,  we have $r(s)=r=\textrm{const}$ for every $s$.  Let us consider the cases $L\neq 0$ and $L=0$.

Assume $L\neq 0$.  From \eqref{theta equation E=0} we have $\theta(s)=\pi/2$ for every $s$.  Therefore the constant $r$-coordinate cannot be $r=0$,  as otherwise the geodesic would lie on the ring singularity.  By Lemma \ref{lemma about closed curves confined},  we can suppose $r=\textrm{const}<r_{-}$ and hence use the $t$-differential equation of \eqref{geodes diff equations} which becomes

\begin{align*}
\rho^2(r,\pi/2)t'=-\frac{2aMLr}{\Delta(r)}\neq 0.
\end{align*}
Hence $t'(s)\neq 0$ for every $s$ and so $t(s)$ must be monotonically increasing/decreasing,  therefore the geodesic cannot be closed.  

Assume now $L=0$.  From Prop.  \ref{E=L=K=0 prop},  a null geodesic with $L=E=K=0$ is a restphoton,  hence it cannot be closed by Prop.  \ref{oss event horizon}.


\subsubsection{Subcase $K(0,L,Q)>0$}\label{subcase E=0,K>0}

Since $|a|<M$ and $L^2+Q>Q\geq 0$,  the discriminant of \eqref{R equation E=0} is $\textrm{dis}=4M^2(L^2+Q)^2-4a^2Q(L^2+Q)>0$.  Therefore $R(r)$ has two roots given by

\begin{align*}
M\pm\sqrt{M^2-\frac{a^2Q}{L^2+Q}}.
\end{align*}

\subsubsection*{If $L\neq0$,} then we have (see Figures \ref{R(r) E=0,  L^2+Q dif 0,  Q>0},  \ref{R(r) E=0,  L^2+Q dif 0,  Q=0})

\[
M+\sqrt{M^2-\frac{a^2Q}{L^2+Q}}>r_{+}=M+\sqrt{M^2-a^2}.
\]

\begin{figure}[H]
\centering
\includegraphics[scale=0.6]{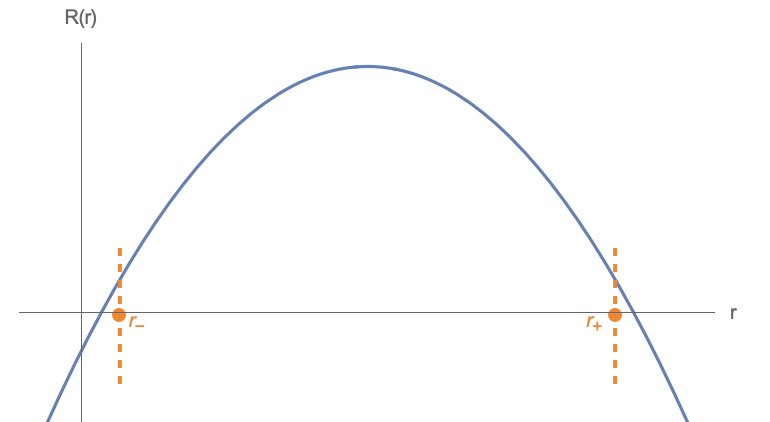} 
\caption{Plot of $R(r)$ in the case $E=0,\; L^2+Q\neq 0,\; Q>0$\\ with $a=3,M=6,L=2,Q=4$.}
\label{R(r) E=0,  L^2+Q dif 0,  Q>0}
\vspace*{-5mm}
\end{figure}

\begin{figure}[H]
\centering
\includegraphics[scale=0.6]{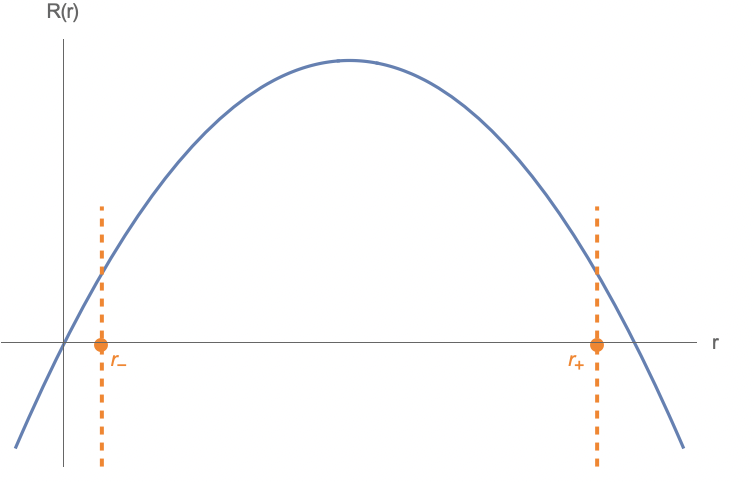} 
\caption{Plot of $R(r)$ in the case $E=0,\; L^2+Q\neq 0,\; Q=0$\\ with $a=3,M=6,L=2,Q=0$.}
\label{R(r) E=0,  L^2+Q dif 0,  Q=0}
\vspace*{-5mm}
\end{figure}

\hspace{3cm}

So if $L\neq 0$,  the geodesics will have to cross $\mathscr{H}$ since one of the two zeros is bigger than $r_{+}$ and the other is smaller than $r_{-}$,   which is impossible for a CNG by Prop.  \ref{oss event horizon}. \\

\subsubsection*{If $L=0$,} then $Q>0$,  hence $R''(r)=-2Q<0$.  Moreover $R(r)=-Q\Delta(r)$ because $E=L=0$. Therefore the two (multiplicity one) roots are $r_{-}$ and $r_{+}$ (see Fig.  \ref{R(r) E=0,  L=0}).

\begin{figure}[H]
\centering
\includegraphics[scale=0.6]{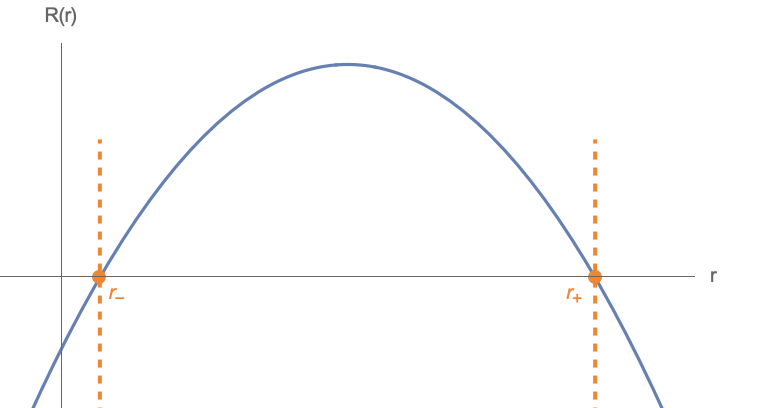} 
\caption{Plot of $R(r)$ in the case $E=0,\; L^2+Q\neq 0,\; L=0$\\ with $a=3,M=6, L=0, Q=4$.}
\label{R(r) E=0,  L=0}
\vspace*{-5mm}
\end{figure}

\hspace{1cm}

This polynomial $R(r)$ cannot produce a CNG since the hypersurfaces $\mathscr{H}=\{r=r_{\pm}\}$ are closed totally geodesic submaninfolds by Prop.  \ref{H is closed totally geod}  and a geodesic cannot have  turning points on such hypersurfaces because it would be tangent to them there.\\

\newpage

\subsection{Case $E\neq 0$}\label{section $E div 0$}

\vspace{0.8cm}
\subsubsection{Subcase $Q=0$} \label{case Q=0}

We have

\begin{align}
R(r)&=E^2r^4+(a^2E^2-L^2)r^2+2M(L-aE)^2r\geq 0,\\
\Theta(\theta)&=\cos^2\theta \bigg(a^2E^2-\frac{L^2}{\sin^2\theta}\bigg)\geq 0. \label{theta eq E non 0, Q=0}
\end{align}

\begin{prop}\label{prop r-bounded eq geodesics}
All the null geodesics with $E\neq 0,\; Q=0$ which have bounded radial behaviour lie in $Eq=\{\theta=\pi/2\}$.
\end{prop}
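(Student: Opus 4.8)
The plan is to argue by contraposition: I will show that a null geodesic $\gamma$ with $E\neq0$, $Q=0$ and bounded radial coordinate must satisfy $\theta\circ\gamma\equiv\pi/2$, i.e. $\gamma\subset Eq$. Everything is driven by the shape of the radial polynomial, which here collapses to $R(r)=E^2r^4+\mathfrak{X}r^2+2MKr$ with $\mathfrak{X}=a^2E^2-L^2$ and $K=(L-aE)^2$ (no constant term, since $Q=0$).

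First I would extract sign information from the $\theta$-equation. If $\gamma\not\subset Eq$, i.e. $\theta\circ\gamma\not\equiv\pi/2$, then there is a parameter value $s_*$ with $\cos^2\theta(\gamma(s_*))\neq 0$; evaluating \eqref{theta eq E non 0, Q=0} at $s_*$ gives $a^2E^2\ge L^2/\sin^2\theta(\gamma(s_*))\ge L^2$, while the equality case $a^2E^2=L^2$ would force $\sin^2\theta(\gamma(s_*))=1$, which is excluded. Hence $\mathfrak{X}=a^2E^2-L^2>0$ strictly, and consequently $K=(L-aE)^2>0$ as well (equality would mean $L=aE$, i.e. $L^2=a^2E^2$, already ruled out). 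This is the only place the hypothesis $\gamma\not\subset Eq$ enters, and I regard it as the crux of the argument.

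With $\mathfrak{X}>0$ the rest is elementary calculus of $R$. Since $R''(r)=12E^2r^2+2\mathfrak{X}>0$ for all $r$, the polynomial $R$ is strictly convex; combined with $R(0)=0$, $R'(0)=2MK>0$ and $R(r)\to+\infty$ as $r\to\pm\infty$, this pins down the roots exactly: two simple zeros, $r=0$ and a single negative root $r^{**}<0$, with $R>0$ on $(-\infty,r^{**})\cup(0,+\infty)$ and $R<0$ on $(r^{**},0)$ (alternatively, factor $R(r)=r\bigl(E^2r^3+\mathfrak{X}r+2MK\bigr)$ and note the cubic factor is strictly increasing and positive at $0$). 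By Remark \ref{non negativitivity of polynomials} the radial motion lives in $\{R\ge 0\}=(-\infty,r^{**}]\cup[0,+\infty)$, and continuity of $r\circ\gamma$ confines it to one of these two half-lines.

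Finally I would conclude that $r\circ\gamma$ is unbounded, contradicting the hypothesis. In the chosen half-line the only zero of $R$ is the finite endpoint ($0$ or $r^{**}$), a simple zero, so by Proposition \ref{initial conditions and zeroes} it is an honest $r$-turning point (and $r\circ\gamma$ cannot be constant there); away from it $R>0$, so after at most one bounce $r\circ\gamma$ is monotone and escapes to $\pm\infty$, its radial speed staying bounded below since $R(r)/\rho^4\to E^2>0$ as $|r|\to\infty$. The main obstacle to be careful about is precisely this last step: it tacitly uses that the geodesic is inextendible over an unbounded parameter interval, so as to exclude $r\circ\gamma$ creeping towards a finite value at which $R>0$; the incomplete case is disposed of separately (cf. Corollary \ref{incomplete are unbounded}).
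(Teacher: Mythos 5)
Your proof is correct and follows essentially the same route as the paper's: read off $a^2E^2\geq L^2$ from the $\theta$-equation at a point off the equator, observe that $R$ is then convex with $R(0)=0$, and conclude that the radial motion cannot stay bounded. The only real difference is that you sharpen the inequality to the strict $a^2E^2>L^2$ (hence $\mathfrak{X}>0$ and $K>0$), which makes both roots of $R$ simple and removes at the outset the borderline case $L=aE$; the paper instead retains that case and disposes of it separately by noting that $R=E^2r^4$ forces $r\equiv 0$ while $\Theta(\theta)=-a^2E^2\cos^4\theta/\sin^2\theta<0$ off the equator. Your closing caveat about completeness is a fair point that the paper glosses over, but it is harmless in the two places the proposition is used, where the geodesic is closed (hence complete) or the incomplete case is handled by Corollary \ref{incomplete are unbounded}.
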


\begin{proof}
Suppose there exists an $r$-bounded null geodesic with $E\neq 0,Q=0$ for which $\theta(s)\neq \pi/2$ for some $s$.  Then from \eqref{theta eq E non 0, Q=0},  we get

\begin{align*}
a^2E^2\geq \frac{L^2}{\sin^2\theta}\geq L^2.
\end{align*}
\vspace{0.8cm}

Observe that 

\begin{align*}
R(0)&=0,\\
R'(r)&=4E^2r^3+2(a^2E^2-L^2)r+2M(L-aE)^2,\\
R''(r)&=12E^2r^2+2(a^2E^2-L^2)\geq 0.
\end{align*}
Therefore $R$ is convex and can only produce an $r$-bounded behaviour if the $r$-coordinate is constant $r(s)\equiv 0$,  when $r=0$ is a multiple root of $R(r)$,  \textit{i.e.} if $L=aE$.  The polynomial reduces then to $R(r)=E^2r^4$ (see Fig.\ref{R(r) E dif 0,  L=aE}).  However in this case we get $\Theta(\theta)=-\frac{\cos^4\theta}{\sin^2\theta}a^2E^2<0$ for some $s$.

 \begin{figure}[H]
\centering
\includegraphics[scale=0.6]{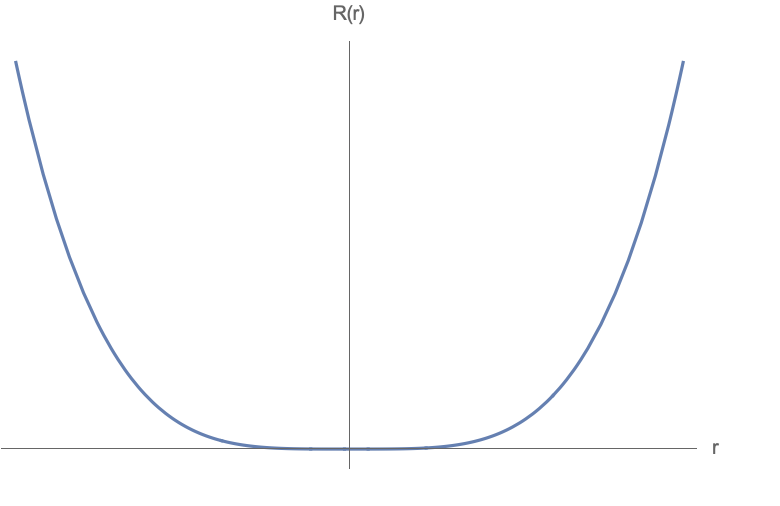} 
\caption{Plot of $R(r)$ in the case $E\neq 0, \; Q=0,\; L=aE\neq 0$ with $a=3,$\\$M=10, E=5, Q=0, L=15$.}
\label{R(r) E dif 0,  L=aE}
\vspace*{-5mm}
\end{figure}

\end{proof}

By Prop.  \ref{prop r-bounded eq geodesics},  we can suppose $\theta(s)=\pi/2$ for every $s$.  Since $R'(0)=2M(L-aE)^2$,   if $L\neq aE$  geodesics starting in $\{r<0\}$ are constrained in this region and cannot reach $\{r\geq 0\}$.  Hence there are no $r$-bounded null geodesics in the region $r<0$ by  Lemma $4.14.2$ \cite{KBH_book}.  Since the geodesics are constrained in $Eq$,  $r=0$ cannot be a $r$-turning point.  By  Lem.  \ref{lemma about closed curves confined},  there could be geodesics with bounded radial behaviour only in $\{0<r<r_{-}\}$.  However null geodesics constrained in the last region cannot be closed by Prop.  \ref{prop spacelike foliation}.
If instead $L=aE$,  then $R(r)=E^2r^4$.  Then the only possible bounded $r$-behaviour would be $r(s)\equiv 0$ which cannot happen as otherwise the geodesic would lie on the ring singularity.

\hspace{1cm}

\subsubsection{Subcase $Q>0$} \label{case Q>0} From Prop.  \ref{eq principal nulls},  we know that for null geodesics $K=Q+(L-aE)^2\geq 0$.  Hence if $Q>0$,  then $K>0.$ Let us again consider $R(r)=E^2r^4+\mathfrak{X}(E,L,Q)r^2+2MK(E,L,Q)r-a^2Q$.  Since $R(0)=-a^2Q<0$,  if a null geodesic has bounded $r$-behaviour  either it must be confined entirely in the negative region or entirely in the positive region,  since we must have $R(r)\geq 0$.  Bounded radial behaviour in the negative region is impossible in this subcase.  Indeed,  the signs of the coefficients of $R(-r)$ are $+\; \textrm{sign}(\mathfrak{X})\; -\; -$,  so for every sign of $\mathfrak{X}$,  there would be only one change of sign,  hence there can be only one single real negative root by the \textit{"Descartes' rule of signs"}.\footnote{ Wilkins \cite{Wilkins} was apparently the first to apply this rule to the polynomial $R$ from the radial equation of motion.}


By Lem.  \ref{lemma about closed curves confined},  we could only have a bounded (either in a compact $r$-interval or constant $r$) $r$-behaviour  in the positive region  $\{0<r<r_{-}\}$.  However null geodesics constrained in the last region cannot be closed by Prop.  \ref{prop spacelike foliation}.

\vspace{0.8cm}

\subsubsection{Subcase $Q<0$} \label{case Q<0}

This is the last remaining case and the most difficult one.  By Prop.  \ref{geod Q<0},  the only possible bounded behaviour is $r(s)=\textrm{const}<0$ (see  Fig.\ref{R(r) Q<0 r=const}).  Such geodesics  are known in the literature as \textit{spherical geodesics},  see e.g.  \cite{spherical_photon}.

\begin{figure}[H]
\centering
\includegraphics[scale=0.6]{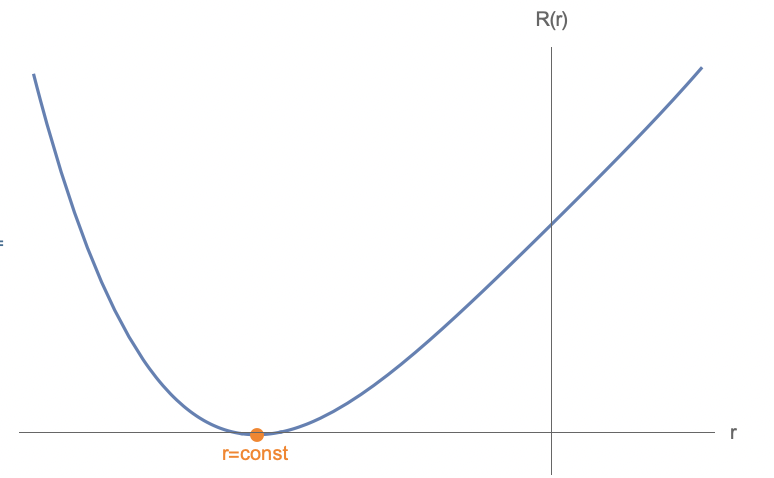} 
\caption{Plot of $R(r)$ in the case $E\neq 0$,  $Q<0$ with $a =\frac{1}{\sqrt{2}},  M = \frac{6}{\sqrt{23}},  E = 1, $\\ $L = 3 \sqrt{\frac{23}{2}},  Q = -104$.}
\label{R(r) Q<0 r=const}
\vspace*{-5mm}
\end{figure}

\hspace{1cm}

By Prop.  \ref{Q<0 condition},  null geodesics with negative Carter constant do not meet $Eq=\{\theta=\pi/2\}$,  hence $\cos^2\theta\neq 0$.  Then we may define $u:=\cos^2\theta\in (0,1]$.  Since we are in the case $E\neq 0$,  we can re-write the $\theta$-equation in \eqref{geodes diff equations} as

\begin{align}\label{theta di u}
\bigg( \frac{\rho^2(r,u)}{E}\bigg)^2\frac{(u')^2}{4u}=-a^2u^2+(a^2-\Phi^2-\mathcal{Q})u+\mathcal{Q}=:\tilde{\Theta}(u),
\end{align}
where $\Phi:=L/E$ and $\mathcal{Q}:=Q/E^2$.  Since we must have $\tilde{\Theta}(u)\geq 0$ somewhere in $(0,1]$,  $w:=a^2-\Phi^2-\mathcal{Q}>0$ beacuse $\mathcal{Q}<0$ and the coefficient of the second order term is negative.  Therefore $\tilde{\Theta}$ must have roots given by

\begin{align}\label{formula for u pm}
u_{\pm}=\frac{w\pm\sqrt{\textrm{dis}}}{2a^2}
\end{align}
where $\textrm{dis}:=w^2+4a^2\mathcal{Q}$,  so that 

\begin{align}
\tilde{\Theta}(u)=-a^2(u-u_{+})(u-u_{-}).
\end{align}
Hence we have

\[
0<u_{-}\leq u_{+}.
\]
We can write the necessary condition to have roots as

\begin{align*}
\textrm{dis}=[\mathcal{Q}+(|\Phi|-|a|)^2][\mathcal{Q}+(|\Phi|+|a|)^2]\geq 0.
\end{align*} 
Notice that $\mathcal{Q}+(|\Phi|+|a|)^2\geq \mathcal{Q}+(\Phi-a)^2=K/E^2\geq 0$ by Prop.  \ref{eq principal nulls}.  If we have $\mathcal{Q}+(|\Phi|+|a|)^2=0$,  then $\tilde{\Theta}(u)=-a^2(u-1)^2-\Phi^2-2|a\Phi|(1-u)\geq 0$ is satisfied only for $u=u_{+}=u_{-}=1$ and $\Phi=0$,  which falls into Prop.  \ref{u-=u+ Q<0 prop}.  If instead $\mathcal{Q}+(|\Phi|+|a|)^2>0$,  we must have  $\mathcal{Q}+(|\Phi|-|a|)^2\geq 0$.  Then by the AM-GM inequality we have
\begin{align*}
\sqrt{\textrm{dis}}\leq \frac{[\mathcal{Q}+(|\Phi|-|a|)^2]+[\mathcal{Q}+(|\Phi|+|a|)^2]}{2}=a^2+\Phi^2+\mathcal{Q},
\end{align*}
and so 

\begin{align*}
u_{+}\leq \frac{a^2-\Phi^2-\mathcal{Q} + a^2+\Phi^2+\mathcal{Q}}{2a^2}=1.
\end{align*}

Therefore we have

\begin{align*}
0<u_{-}\leq u_{+}\leq 1.
\end{align*}

\begin{prop}\label{u-=u+ Q<0 prop}
In the Kerr-star spacetime,  consider a null geodesic $\gamma$ with $\mathcal{Q}<0$ and $r=\textrm{const}$.  If $\textrm{dis}=0$,  then $\theta=\textrm{const}$ and the geodesic cannot be closed.
\end{prop}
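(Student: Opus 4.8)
The plan is in two stages. First I would show that the hypothesis $\textrm{dis}=0$ pins the $\theta$-coordinate, and then I would use the resulting constancy of both $r$ and $\theta$ to force strict monotonicity of the Kerr-star time coordinate $t^*$ along $\gamma$, which is incompatible with closure since $t^*\colon K^*\to\mathbb{R}$ is globally defined.

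For the first stage: when $\textrm{dis}=0$ the two values in \eqref{formula for u pm} collapse to a single $u_0:=w/(2a^2)$, which lies in $(0,1]$ by the discussion preceding the statement, so that $\tilde{\Theta}(u)=-a^2(u-u_0)^2\le 0$. Along $\gamma$ the left-hand side of \eqref{theta di u} is non-negative (here $u=\cos^2\theta\in(0,1]$ because $Q<0$, by Prop.~\ref{Q<0 condition}), hence $\tilde{\Theta}(u(s))\equiv 0$ and $u(s)\equiv u_0$; differentiating $u=\cos^2\theta$ and using $\theta\ne\pi/2$ (since $u_0>0$) gives $\theta'\equiv 0$ away from the axis, so $\theta$ is constant in every case. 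If $u_0=1$ then $\cos^2\theta\equiv 1$, so $\gamma$ lies in the axis $A$ and Prop.~\ref{no closed geodesics in axis} already forbids it from closing; so from here on I may assume $u_0\in(0,1)$, meaning $\theta$ is a fixed angle in $(0,\pi)\setminus\{\pi/2\}$ and $\gamma$ avoids $A$, while by Prop.~\ref{geod Q<0} the constant value of $r$ is negative, placing $\gamma$ in block III where $\Delta(r)>0$ and the Kerr-star coordinates are valid.

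For the second stage: with $r$ and $\theta$ both constant, the first two equations of \eqref{geodes diff equations} show that $t'$ and $\phi'$ are constant along $\gamma$, say $t'\equiv\dot t$ and $\phi'\equiv\dot\phi$. Since $r$ is constant, $t^*=t+\mathcal{T}(r)$ differs from $t$ by a constant along $\gamma$, so $t^*\circ\gamma$ is affine with slope $\dot t$; if $\dot t\ne 0$ then $t^*\circ\gamma$ is strictly monotone, and $\gamma$ cannot be closed. It remains to exclude $\dot t=0$: then $\gamma'=\dot\phi\,\partial_\phi$ with $\dot\phi\ne 0$ (otherwise $\gamma'\equiv 0$, impossible for a geodesic), nullity forces $\mathbf{g}(\partial_\phi,\partial_\phi)=0$ along $\gamma$, hence $L=\dot\phi\,\mathbf{g}(\partial_\phi,\partial_\phi)=0$ and $\Phi=0$; a short computation then gives $\textrm{dis}=(a^2+\mathcal{Q})^2$, so $\textrm{dis}=0$ forces $\mathcal{Q}=-a^2$ and $u_0=w/(2a^2)=1$, contradicting $u_0\in(0,1)$.

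The differential-equation bookkeeping (the double-root structure of $R$, already supplied by Prop.~\ref{geod Q<0}, and the constancy of $t',\phi'$) is routine; the one genuinely delicate point is the last step, ruling out $\dot t=0$, because a priori a closed null geodesic could degenerate into a circular integral curve of $\partial_\phi$ — the argument above shows this would require $\Phi=0$ and, under $\textrm{dis}=0$, collapses the orbit onto the axis, where closure is already excluded.
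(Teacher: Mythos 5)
Your proposal is correct, and its first half (forcing $u\equiv u_0$ from $\tilde{\Theta}(u)=-a^2(u-u_0)^2\le 0$ together with the non-negativity of the left-hand side of \eqref{theta di u}, then splitting off the axis case $u_0=1$) is the same as the paper's. Where you genuinely diverge is in excluding the degenerate possibility that the closed geodesic is a null integral curve of $\partial_\phi$ (the case $\dot t=0$, $\dot\phi\neq 0$). The paper handles this by a direct geodesic-equation computation: it evaluates the Christoffel symbol $\Gamma^\theta_{\phi\phi}$ at points where $\mathbf{g}(\partial_\phi,\partial_\phi)=0$ and shows it cannot vanish for $r<0$, $\theta\notin\{0,\pi/2,\pi\}$, so such a $\phi$-circle is never a geodesic; this stronger, hypothesis-free fact is what the subsequent remark in the paper (that the closed null $\partial_\phi$-integral curves in $K^*$ are not geodesics) actually relies on. You instead stay entirely at the level of the constants of motion: nullity plus $\dot t=0$ gives $L=0$, hence $\Phi=0$, hence $\textrm{dis}=(a^2+\mathcal{Q})^2$, and the hypothesis $\textrm{dis}=0$ then forces $\mathcal{Q}=-a^2$ and $u_0=1$, contradicting the off-axis assumption. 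This buys you a shorter argument with no curvature/Christoffel computation (and a cleaner use of the globally defined $t^*$ instead of the BL time $t$), at the price of using the hypothesis $\textrm{dis}=0$ essentially in the last step, so your argument proves exactly the stated proposition but not the paper's stronger byproduct about null $\partial_\phi$-circles in general. All the auxiliary points you invoke ($E\neq 0$ and $u\in(0,1]$ from Prop.~\ref{Q<0 condition}, $r=\textrm{const}<0$ from Prop.~\ref{geod Q<0} so that $\Delta(r)\neq 0$ and the Boyer--Lindquist $t,\phi$-equations apply, constancy of $t'$ and $\phi'$ from \eqref{geodes diff equations}) are justified, so I see no gap.
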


\begin{proof}
Since $\textrm{dis}=0$,  we have $u_{-}=u_{+}$.  Then

\[
\tilde{\Theta}(u)=-a^2\big(u-u_{+}\big)^2\geq 0.
\]
Hence the last inequality is satisfied only if $u=u_{+}=\textrm{const}$,  therefore $\theta=\textrm{const}$.  Then the geodesic $\gamma$ cannot be closed.  Indeed,  there are two possibilities.  First,  if $\gamma$ is entirely contained in $A$,  then it cannot be closed by Prop.  \ref{no closed geodesics in axis}.  Second,  if $\gamma$ is not entirely contained in $A$,  by Prop.  \ref{differential equations of geodessics}  a geodesic of the form $s\mapsto (t(s),r_0,\theta_0,\phi(s))$ has $t'\equiv\textrm{const}$ and $\phi'\equiv\textrm{const}$.  It follows that $s\mapsto t(s)$ and $s\mapsto \phi(s)$ are affine functions.  If the geodesic is bounded in $K^*$,  then $t(s)$ must be constant.  Note that curves of the kind $\gamma(s)=(t_0,r_0,\theta_0,b_0s+b_1)$,  $b_0,b_1\in\mathbb{R}$,  are geodesics if and only if $b_0=0$ since the geodesic equation can be written in BL coordinates as $\Gamma^\alpha_{\phi\phi}(\gamma(s))b_0^2=0$ but the Christoffel symbol $\Gamma^{\theta}_{\phi\phi}$ cannot vanish at points where $\partial_\phi$ is null. Indeed,

\begin{align*}
\Gamma^{\theta}_{\phi\phi}&=-\frac{\sin\theta \cos\theta}{\rho^6(r,\theta)}\bigg[ \rho^4(r,\theta) \frac{\mathbf{g}(\partial_\phi,\partial_\phi)}{\sin^2\theta} +2M(r^2+a^2)a^2r\sin^2\theta \bigg]\\
&=-\frac{\sin\theta \cos\theta}{\rho^6(r,\theta)}\bigg[ 2M(r^2+a^2)a^2r\sin^2\theta \bigg]\neq 0,
\end{align*}
since $\theta\neq 0,\pi$ because we have already ruled out closed null geodesics in $A$,  $\theta\neq \pi/2$ by Prop.  \ref{Q<0 condition} and $r<0$.  Hence,  $\phi(s)$ is also constant and the geodesic degenerates to a point.
\end{proof}

\begin{oss}
Closed null curves exist in the Kerr-star spacetime: for instance,  they are given by the integral curves of the vector field $\partial_\phi$,  whenever this last happens to be null,  for some negative $r$.  Such curves cannot be geodesics by Prop.  \ref{u-=u+ Q<0 prop}.
\end{oss}

We may now assume $\textrm{dis}>0$.  Therefore we have the following chain of inequalities




\begin{align}
0<u_{-}<u_{+}\leq 1.
\end{align}


We hence define 

\begin{align}\label{values of the thetai}
\theta_1:=\arccos(\sqrt{u_{+}}),\theta_2:=\arccos(\sqrt{u_{-}}),\theta_3:=\arccos(-\sqrt{u_{-}}),\theta_4:=\arccos(-\sqrt{u_{+}})
\end{align}
so that
\begin{align}
0\leq \theta_1< \theta_2<\frac{\pi}{2}<\theta_3< \theta_4\leq \pi.
\end{align}

\begin{prop}\label{prop theta behaviours Q<0}
In the Kerr-star spacetime,  null geodesics with $\mathcal{Q}<0$,  $r=\textrm{const}$  and $\theta\neq\textrm{const}$ can have one of the following $\theta$-behaviours:

\begin{itemize}
\item if $0<u_{-}<u_{+}<1$,  then the $\theta$-coordinate oscillates periodically in one of the following intervals $0<\theta_1\leq \theta\leq \theta_2<\pi/2$ or $\pi/2<\theta_3\leq \theta\leq \theta_4<\pi$;
\item if $0<u_{-}<u_{+}=1$,  then $\Phi(=L/E)= 0$ and the $\theta$-coordinate oscillates periodically in one of the following intervals $0=\theta_1\leq \theta\leq \theta_2<\pi/2$ or $\pi/2<\theta_3\leq \theta\leq \theta_4=\pi$,
\end{itemize}
where the $\theta_i$,  $i=1,2,3,4$,  are given by \eqref{values of the thetai}.
\end{prop}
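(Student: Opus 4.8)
The plan is to use that, since $r$ is constant along $\gamma$, the function $\rho^2(r,\theta)=r^2+a^2\cos^2\theta$ depends on $\gamma$ only through $u:=\cos^2\theta$, so that \eqref{theta di u} becomes the \emph{autonomous} first order equation
\[
(u')^2=F(u),\qquad F(u):=\frac{4u\,E^2\,\tilde{\Theta}(u)}{(r^2+a^2u)^2}.
\]
Since $\theta\neq\textrm{const}$, Proposition \ref{u-=u+ Q<0 prop} forces $\textrm{dis}>0$, and the discussion preceding the statement then yields $0<u_-<u_+\leq 1$ with $\tilde{\Theta}(u)=-a^2(u-u_-)(u-u_+)$. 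Hence $F$ is smooth, strictly positive on $(u_-,u_+)$, and vanishes to first order at each endpoint (using $u_-\neq u_+$, $u_->0$ and $r^2+a^2u>0$); in particular the $u$-motion is confined to $[u_-,u_+]$.

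First I would note that $\theta$ stays on one side of the equator: as $u(s)\geq u_->0$ for all $s$, $\cos\theta$ never vanishes, so by continuity either $\theta(s)\in[0,\pi/2)$ for all $s$, or $\theta(s)\in(\pi/2,\pi]$ for all $s$. In the first case $\cos\theta=+\sqrt u$, so $\theta=\arccos\sqrt u$; in the second $\cos\theta=-\sqrt u$, so $\theta=\arccos(-\sqrt u)$. As $u$ ranges over $[u_-,u_+]$ and $\arccos$ is decreasing, this places $\theta$ in $[\theta_1,\theta_2]$, respectively $[\theta_3,\theta_4]$, with the $\theta_i$ of \eqref{values of the thetai}; moreover $\theta_1=0$ (resp.\ $\theta_4=\pi$) occurs exactly when $u_+=1$, which, as shown above, forces $\Phi=0$.

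Next I would prove periodicity. Away from $\theta\in\{0,\pi/2,\pi\}$ the map $u=\cos^2\theta$ is a local diffeomorphism, so the simple zeros $u_\pm<1$ of $\tilde{\Theta}$ correspond to simple zeros of $\Theta(\theta)$ in the variable $\theta$; since $\gamma$ is a genuine geodesic, Proposition \ref{initial conditions and zeroes} (in its $\theta$-version) then guarantees that each time $u$ reaches $u_-$ or $u_+$ the $\theta$-coordinate has a true turning point. On the other hand the endpoints are simple zeros of $F$, so $\int_{u_-}^{u_+}F(u)^{-1/2}\,du$ is finite and a non-constant solution reaches each endpoint in finite parameter interval and bounces back; hence $u$, and therefore $\theta$, is a periodic function of the affine parameter oscillating between $u_-$ and $u_+$. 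This is the first bullet. For the second, when $u_+=1$ the geodesic attains $\theta=0$ or $\pi$; there $\Phi=0$ gives $L=0$ and $\Theta(\theta)=Q+a^2E^2\cos^2\theta$, so $\Theta(0)=Q+a^2E^2>0$ (positive because $\textrm{dis}>0$), whence the geodesic is transverse to the axis and passes smoothly through the pole (with the usual $\phi\mapsto\phi+\pi$ passage on $S^2$), so $\theta$ again oscillates periodically, now over $[0,\theta_2]$ or $[\theta_3,\pi]$.

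The hard part will be the boundary case $u_+=1$, where the geodesic meets the axis: there the change of variables $u\mapsto\theta=\arccos(\pm\sqrt u)$ ceases to be a local diffeomorphism (its derivative blows up at $u=1$) and $\theta\in\{0,\pi\}$ is a coordinate singularity of $S^2$, so one must argue carefully that the geodesic crosses the pole rather than being tangent to the axis and that the interval of oscillation is then $[0,\theta_2]$ (resp.\ $[\theta_3,\pi]$); this is also why the condition $\Phi=0$ is singled out. Everything else is routine qualitative analysis of the autonomous equation $(u')^2=F(u)$ once the sign pattern $0<u_-<u_+\leq 1$ and the simplicity of the zeros of $F$ have been recorded.
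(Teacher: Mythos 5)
Your proposal is correct and follows essentially the same route as the paper: both rest on the first-order equation \eqref{theta di u} in $u=\cos^2\theta$ with the root structure $0<u_-<u_+\leq 1$, confinement to one hemisphere, turning points at the simple zeros via Prop.~\ref{initial conditions and zeroes} together with autonomy from $r=\textrm{const}$, and the observation that reaching $u_+=1$ (the axis, where $\tilde{\partial}_\phi=0$) forces $\Phi=0$ with the geodesic crossing the pole rather than turning there. Your write-up merely makes explicit (finite $\int F^{-1/2}\,du$, transversality at the pole from $\Theta(0)=Q+a^2E^2>0$) what the paper conveys more descriptively via the figures and the discussion following the proposition.
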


\begin{proof}
If $0<u_{-}<u_{+}<1$,  the graph of $\Theta(\theta)/E^2$ is shown in Fig.   \ref{fig:Theta if phi diff 0}.

\begin{figure}[H]
\centering
\includegraphics[scale=0.5]{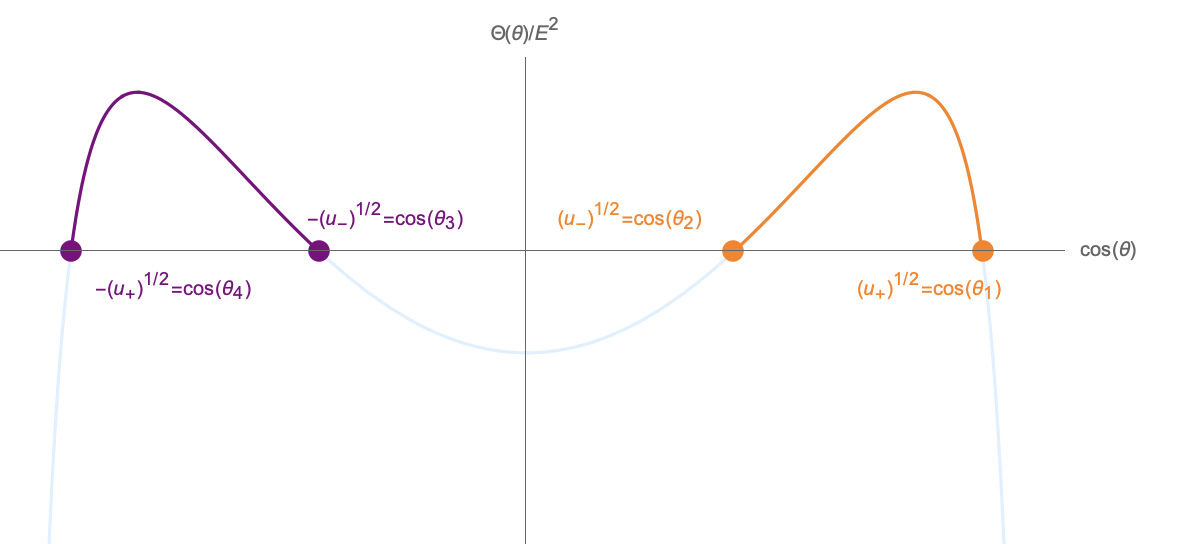} 
\caption{Plot of $\Theta(\theta)/E^2$ for $a=4,  \mathcal{Q}=-2,  |\Phi|=\sqrt{3}$.  The $\theta$-motion is allowed in the orange region,  above the equatorial hyperplane and in the purple region,  below it.\\}
\label{fig:Theta if phi diff 0}
\end{figure}
\noindent
The $\theta$-motion is allowed in the region where $\tilde{\Theta}(u)$ and hence $\Theta(\theta)/E^2$ are non-negative:

\begin{align*}
&0<\theta_1\leq \theta\leq \theta_2<\pi/2\hspace{0.5cm}\textrm{in the orange region,  above the}\;Eq,\\
&\pi/2<\theta_3\leq \theta\leq \theta_4<\pi\hspace{0.5cm}\textrm{in the purple region,  below the}\;Eq.
\end{align*}
This $\theta$-motion is shown in Fig.  \ref{fig:thetaSimulated_L diff 0} and the corresponding $\theta$-$\phi$-motion in Fig.  \ref{fig: theta phi motion 1}.

If $0<u_{-}<u_{+}=1$,  the graph of $\Theta(\theta)/E^2$ is shown in Fig.  \ref{fig:Theta if phi equal 0}.

\begin{figure}[H]
\centering
\includegraphics[scale=0.4]{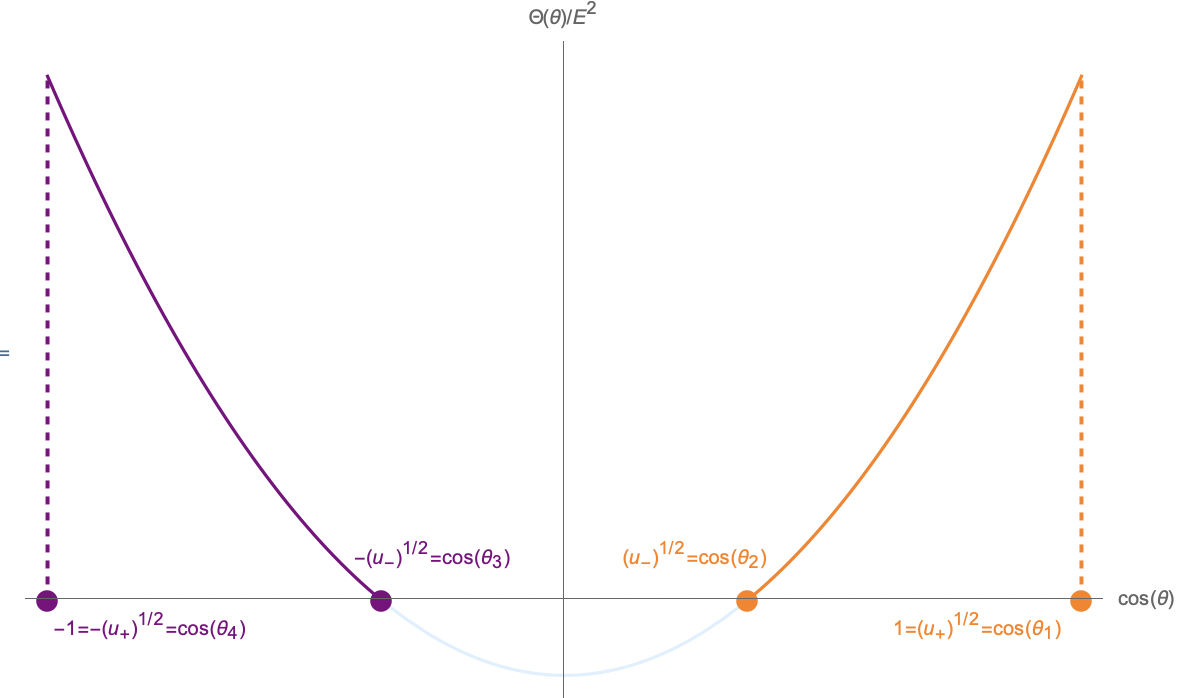} 
\caption{Plot of $\Theta(\theta)/E^2$ for $a=4,  \mathcal{Q}=-2,  \Phi=0$.  The $\theta$-motion is allowed in the orange region,  oscillating simmetrically around $\{\theta=0\}$ above the equatorial hyperplane and in the purple region,  oscillating simmetrically around $\{\theta=\pi\}$ below the equatorial hyperplane.\\}
\label{fig:Theta if phi equal 0}
\vspace*{1cm}
\end{figure}
\noindent
These geodesics intersect the axis $A=\{\theta=0,\pi\}$ and hence $\Phi=0$ because $\tilde{\partial}_\phi=0$ on $A$.  The $\theta$-motion is allowed where $\tilde{\Theta}(u)$ and hence $\Theta(\theta)/E^2$ are non-negative:

\begin{align*}
&0=\theta_1\leq \theta\leq \theta_2<\pi/2\hspace{0.5cm}\textrm{in the orange region,  above the}\;Eq,\\
&\pi/2<\theta_3\leq \theta\leq \theta_4=\pi\hspace{0.5cm}\textrm{in the purple region,  below the}\;Eq.
\end{align*}
\noindent
This $\theta$-motion is shown in Fig. \ref{fig:thetaSimulated_L=0} and the corresponding $\theta$-$\phi$-motion in Fig.  \ref{fig: theta phi motion 2}.

There is a difference between the motions of Fig.  \ref{fig:Theta if phi diff 0} and Fig.  \ref{fig:Theta if phi equal 0}.  In the first case,  the geodesics oscillates between $\theta_1$ and $\theta_2$ (or between $\theta_3$ and $\theta_4$),  corresponding to half $\theta$-oscillation.  In the second case instead,  the geodesics complete symmetric oscillations around the axis,  either above the equatorial hyperplane,  crossing $\{\theta=0\}$ or below the equatorial hyperplane crossing $\{\theta=\pi\}$.  However,  the motion between $\theta_1=0$ and $\theta_2$  (or between $\theta_3$ and $\theta_4=\pi$) still corresponds to half $\theta$-oscillation  (see Figures \ref{fig:thetaSimulated_L=0},  \ref{fig:Theta if phi equal 0} and  Cor.  $4.5.6$ in \cite{KBH_book}).  

In both oscillating cases,  since $r=\textrm{const}$,  \eqref{theta di u} implies that the coordinate $\theta(s)$ oscillates periodically in the corresponding $\theta$-interval (see Figures \ref{fig:thetaSimulated_L diff 0} and \ref{fig:thetaSimulated_L=0}).  
\end{proof}

Consider the first order equations of motion (with the rescaled constants of motion $\mathcal{Q}:=Q/E^2,  \Phi:=L/E$),  for a constant $r<0$:

\begin{align}
\label{equation of theta plot}\frac{\rho^2(r,\theta)}{E}\frac{d\theta}{ds}&=\pm\sqrt{\Theta(\theta)}=\pm\sqrt{\mathcal{Q}+a^2\cos^2\theta-\Phi^2\frac{\cos^2\theta}{\sin^2\theta}}\\
\label{equation of t plot}\frac{\rho^2(r,\theta)}{E}\frac{dt}{ds}&=\frac{r^2+a^2}{\Delta}(r^2+a^2-a\Phi)+a(\Phi-a\sin^2\theta).
\end{align}

Because of the $\theta$-differential equation,  we can restrict to an interval $\mathcal{U}\subset\theta^{-1}\big( (\theta_1,\theta_2)\big)$ on which $d\theta/ds$ is either everywhere positive or everywhere negative (depending on the initial condition).  Due to the symmetry in \eqref{equation of theta plot} and the fact that $r=\textrm{const}$,  $\theta(s)$ is periodic over twice the interval $\mathcal{U}$.  For instance,  set $\mathcal{U}=(0,T/2)$,   starting from $\theta(0)=\theta_1$,  hence $\theta'(s)=+\sqrt{\Theta(\theta)} >0$ for $s\in(0,T/2)$,  then $\theta'(s)=-\sqrt{\Theta(\theta)} <0$ for $s\in(T/2,T)$,  where $\theta'(T/2)=0$,  because $\theta(T/2)=\theta_2$ ($'\equiv d/ds$) and Prop.  \ref{initial conditions and zeroes},  which explains the change of sign of $\theta'(s)$ (using the fact that $\theta_1,\theta_2$ are multiplicity one zeroes of $\Theta(\theta)$).  Hence every $\Delta s=T/2$,  $\theta'(s)$ changes sign.

\begin{figure}[h]
\centering
\includegraphics[scale=0.5]{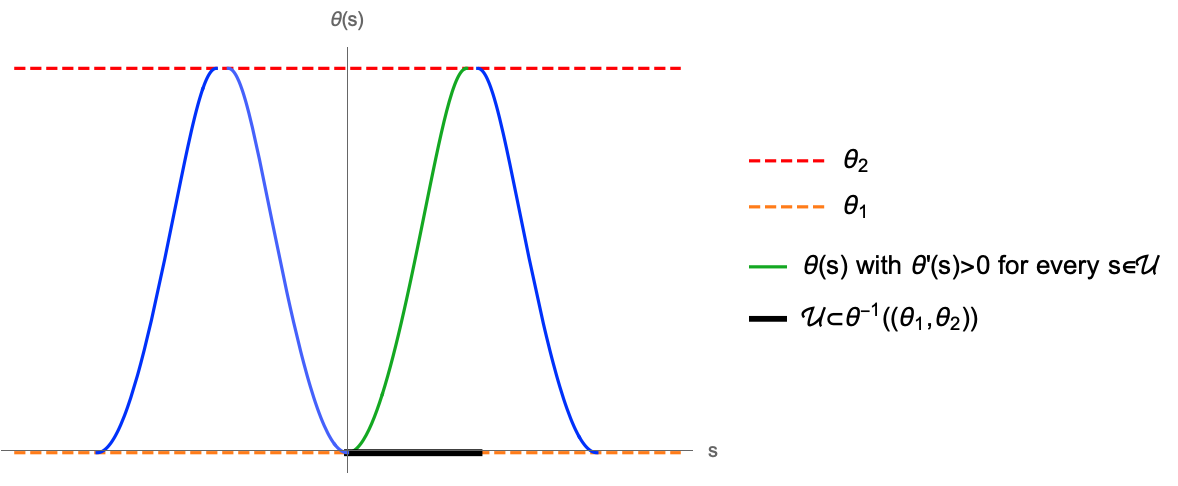} 
\caption{$\theta(s)$ obtained numerically from \eqref{equation of theta plot},  with $a=3, M=8, \\E=1,  \mathcal{Q}=-1.252,  \Phi=1.407$ and $r=-1$.  The open subset $\mathcal{U}$ (black in the figure) is an interval on which $\theta(s)$ is strictly monotonic.\\}
\label{fig:thetaSimulated_L diff 0}
\end{figure}

\begin{figure}[h]
\centering
\includegraphics[scale=0.5]{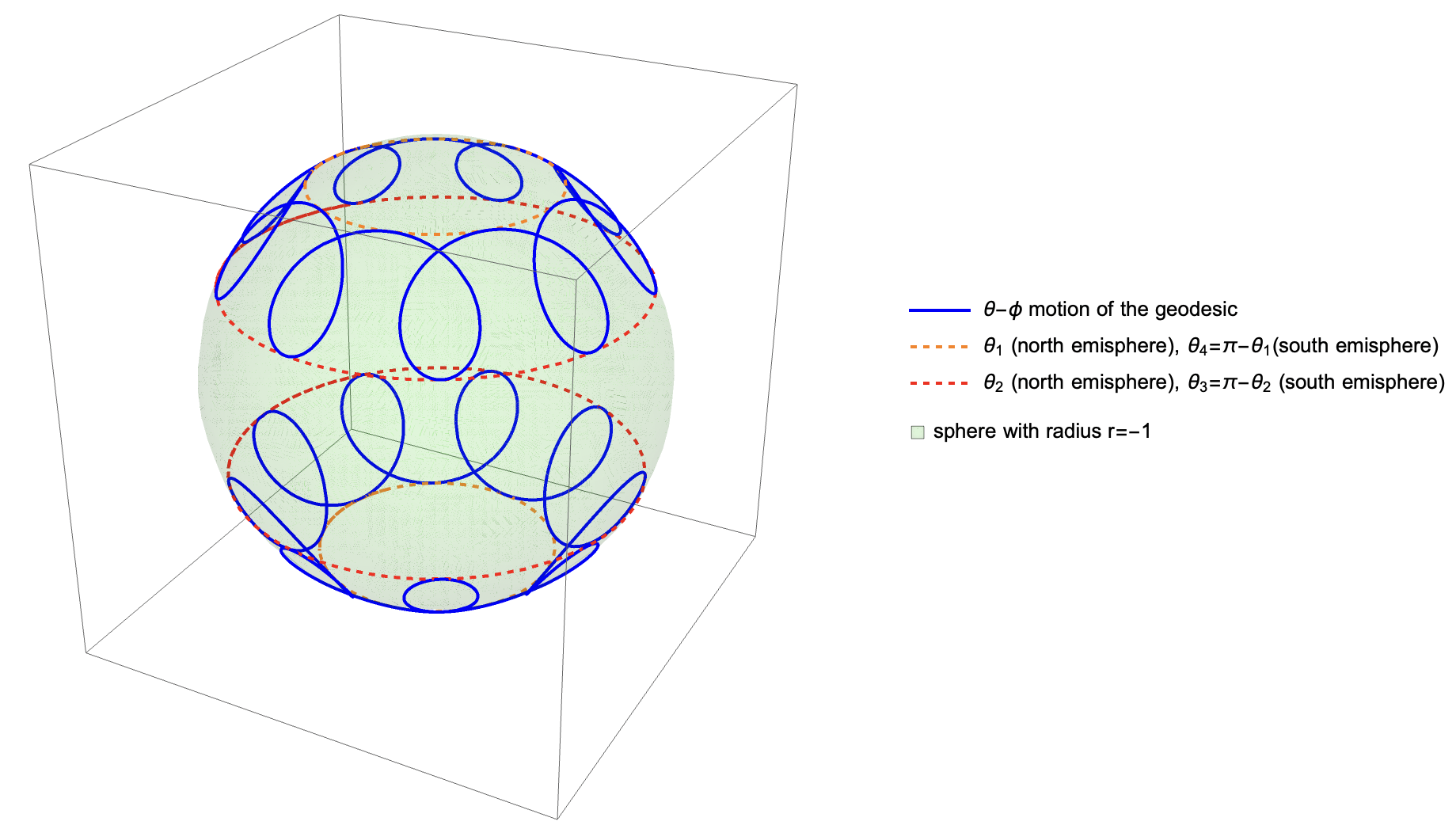} 
\caption{$\theta$-$\phi$ motion obtained numerically from the geodesic equations,  with $a=3,  M=8, E=1,  \mathcal{Q}=-1.252,  \Phi=1.407$ and $r=-1$ and initial conditions $\gamma(0)=(0,-1,\theta_1,0), \; \gamma'(0)\approx (0.529,0,0,0.376)$.}
\label{fig: theta phi motion 1}
\end{figure}


\begin{figure}[H]
\centering
\includegraphics[scale=0.5]{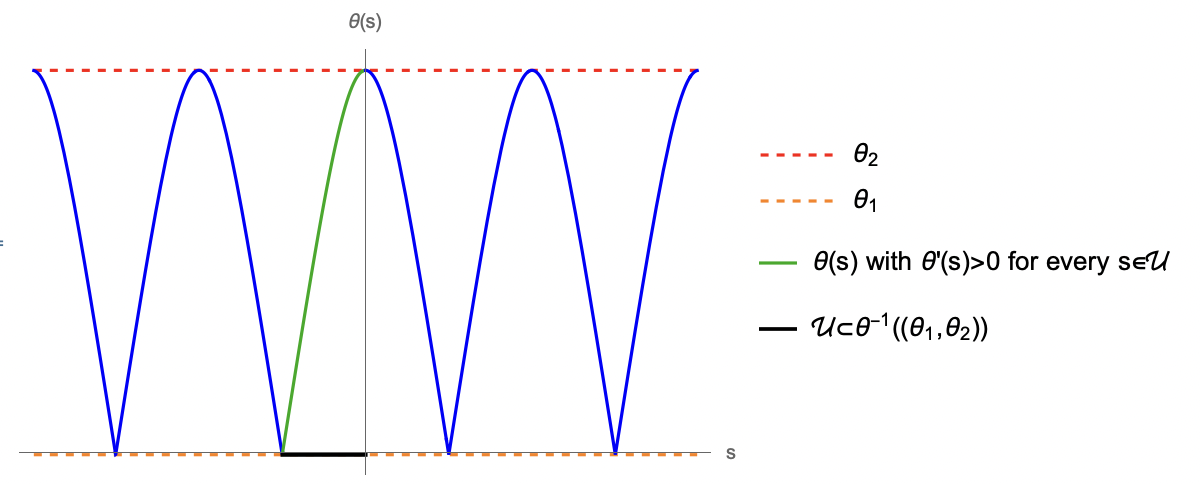} 
\caption{$\theta(s)$ obtained numerically from the geodesic equation with \\
$a=3, M=8,  r=r_{\textrm{crit}}<0$ where  $r_{\textrm{crit}}$ is the radius at which $\mathbf{g}(\partial_\phi,\partial_\phi)|_{r=r_{\textrm{crit}},\theta=\theta_2}=0$ and initial conditions $\gamma(0)=(0,r_{\textrm{crit}},\theta_2,0)$ and $\gamma'(0)=(0,0,0,-1)$.  Hence,  since $\gamma'(0)=-\partial_\phi$,  the geodesic has $\Phi=0$ ($L=0$).  The Carter constant is set to $\mathcal{Q}=-5.409$.  The open subset $\mathcal{U}$ (black in the figure) is an interval on which $\theta(s)$ is strictly monotonic.\\}
\label{fig:thetaSimulated_L=0}
\end{figure}

\begin{figure}[H]
\centering
\includegraphics[scale=0.5]{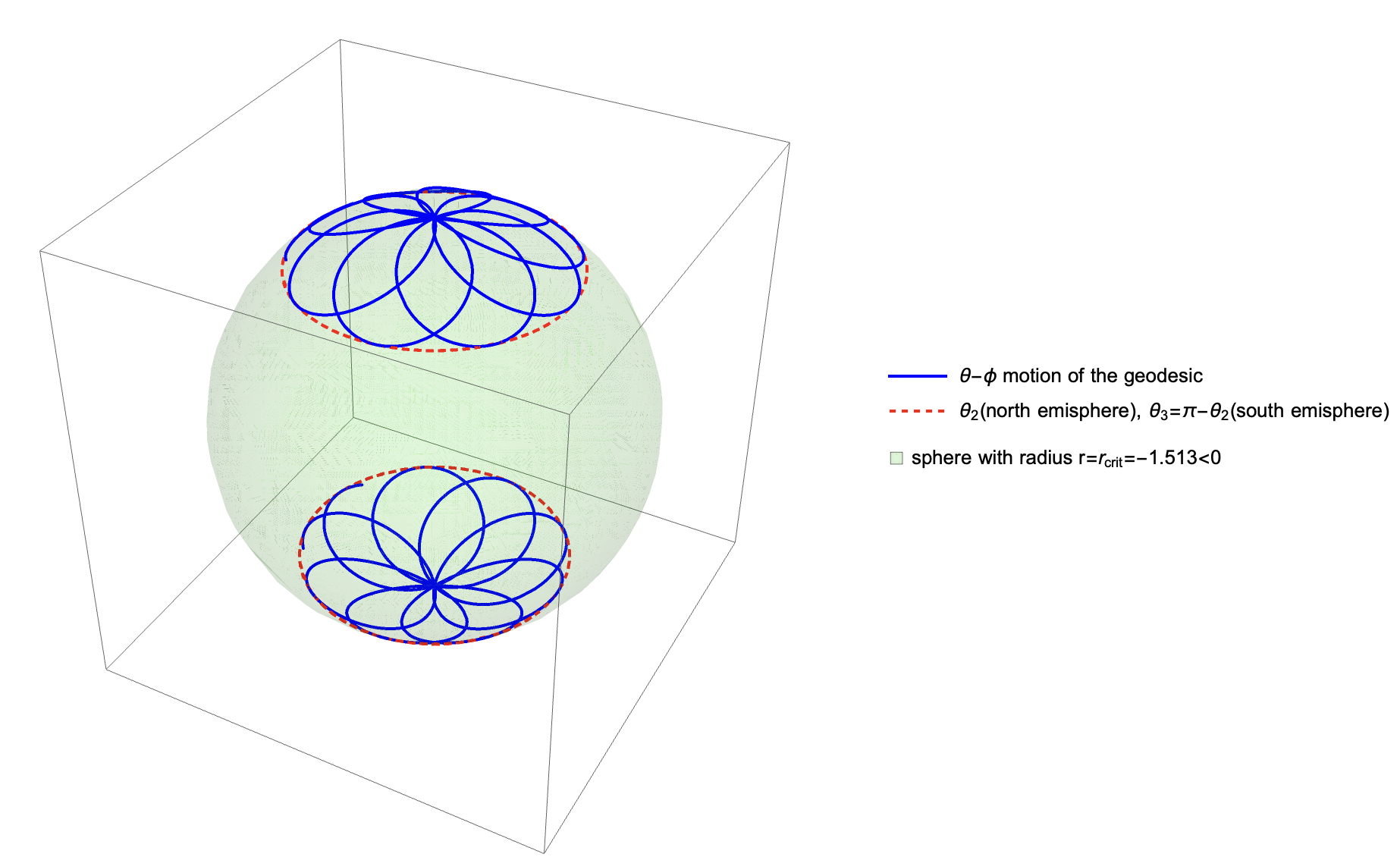} 
\caption{$\theta$-$\phi$ motion obtained numerically from the geodesic equation with \\ $a=3,M=8,  r=r_{\textrm{crit}}<0,  \mathcal{Q}=-5.409,  \Phi=0$ where  $r_{\textrm{crit}}$ is the radius at which $\mathbf{g}(\partial_\phi,\partial_\phi)|_{r=r_{\textrm{crit}},\theta=\theta_2}=0$ and initial conditions $\gamma(0)=(0,r_{\textrm{crit}},\theta_2,0)$ and $\gamma'(0)=(0,0,0,-1)$.  Notice that the geodesic crosses $\theta_1=0$ and $\theta_4=\pi$ with non-zero velocity.}
\label{fig: theta phi motion 2}
\end{figure}

At parameters where $\Theta(\theta)\neq 0$ we can combine \eqref{equation of t plot} and \eqref{equation of theta plot} to get

\begin{align}\label{t theta eq}
\frac{dt}{d\theta}=\frac{r^2\Delta+2Mr(r^2+a^2-a\Phi)}{\pm\Delta\sqrt{\Theta(\theta)}}+a^2\frac{\cos^2\theta}{\pm\sqrt{\Theta(\theta)}}=B(r)\frac{1}{\pm\sqrt{\Theta(\theta)}}+a^2\frac{\cos^2\theta}{\pm\sqrt{\Theta(\theta)}},
\end{align}
with $B(r):=\frac{r^2\Delta(r)+2Mr(r^2+a^2-a\Phi)}{\Delta(r)}.$ \\ 

\begin{prop}[See \cite{Chandrasekhar},  \cite{spherical_photon}]
In a Kerr spacetime,  a null geodesic with negative Carter constant $\mathcal{Q}$ and constant radial coordinate has the following pair of constants of motion
\begin{align}
\Phi=\Phi(r)= \frac{r^2(r-3M)+a^2(M+r)}{a(M-r)}\;,\hspace{1cm}\mathcal{Q}=\mathcal{Q}(r)=-r^3\frac{(r^3-6Mr^2+9M^2r-4a^2M)}{a^2(M-r)^2}.\label{class r=const}
\end{align}
\end{prop}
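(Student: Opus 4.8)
The plan is to turn the hypothesis ``$r\circ\gamma$ is constant, say $\equiv r_0$'' into two algebraic conditions on the radial polynomial $R$ and then solve for the pair $(\Phi,\mathcal{Q})$. Since $r\circ\gamma\equiv r_0$ we have $(r\circ\gamma)'\equiv0$, so the radial equation of \eqref{geodes diff equations} gives $R(r_0)=0$; and since $r$ is constant rather than turning, Proposition \ref{initial conditions and zeroes} forces $r_0$ to be a zero of $R$ of multiplicity at least two, i.e.\ also $R'(r_0)=0$. For null geodesics ($q=0$) it is convenient to use the form $R(r)=-\Delta(r)K+\mathbb{P}(r)^2$ from Proposition \ref{differential equations of geodessics}, together with $\Delta'(r)=2(r-M)$ and $\mathbb{P}'(r)=2rE$. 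The two conditions then read $\mathbb{P}(r_0)^2=\Delta(r_0)\,K$ and $(r_0-M)\,K=2r_0E\,\mathbb{P}(r_0)$.

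Before eliminating I would check that no denominator vanishes. As $Q<0$, Proposition \ref{Q<0 condition} gives $E\neq0$, so $\Phi=L/E$ and $\mathcal{Q}=Q/E^2$ make sense, and Proposition \ref{geod Q<0} shows every root of $R$ is negative, so $r_0<0$; hence $r_0\neq M$ and $\Delta(r_0)=(r_0-r_-)(r_0-r_+)>0$. Next, $\gamma$ cannot be principal: if it were, then $K=0$ by Proposition \ref{eq principal nulls}, and $\gamma'\in\Pi$ would force $\mathbb{D}(\theta)=\mathbf{g}(\gamma',W)=0$, i.e.\ $L=Ea\sin^2\theta$, whence $\mathbb{P}(r)=E\,\rho^2(r,\theta)$ never vanishes on $K^*$ and $R(r)=\mathbb{P}(r)^2>0$, contradicting $R(r_0)=0$. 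Therefore $K\neq0$, hence $K>0$ again by Proposition \ref{eq principal nulls}, and then $\mathbb{P}(r_0)^2=\Delta(r_0)K>0$ gives $\mathbb{P}(r_0)\neq0$.

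The remaining work is elementary algebra. Substituting $K=2r_0E\,\mathbb{P}(r_0)/(r_0-M)$ into $\mathbb{P}(r_0)^2=\Delta(r_0)K$ and cancelling $\mathbb{P}(r_0)$ gives $\mathbb{P}(r_0)=2r_0E\,\Delta(r_0)/(r_0-M)$; dividing by $E$, inserting $\mathbb{P}(r_0)=(r_0^2+a^2)E-La$, and simplifying the numerator $(r_0^2+a^2)(r_0-M)-2r_0(r_0^2-2Mr_0+a^2)=-\big(r_0^2(r_0-3M)+a^2(M+r_0)\big)$ produces the stated formula for $\Phi(r)$. Feeding this back, $K/E^2=4r_0^2\Delta(r_0)/(r_0-M)^2$, and from $\mathcal{Q}=K/E^2-(\Phi-a)^2$ with $\Phi-a=r_0(r_0^2-3Mr_0+2a^2)/\big(a(M-r_0)\big)$, expanding $4a^2\Delta(r_0)-(r_0^2-3Mr_0+2a^2)^2=-r_0\big(r_0^3-6Mr_0^2+9M^2r_0-4a^2M\big)$ yields the stated formula for $\mathcal{Q}(r)$. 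There is no genuine obstacle here beyond carrying out these two polynomial identities; the only point demanding attention is the case analysis above that guarantees $r_0-M$, $\mathbb{P}(r_0)$, $E$ and $a$ are all nonzero, so that every division performed is legitimate.
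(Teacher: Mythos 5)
Your proposal is correct: every division is justified, and both polynomial identities check out, reproducing \eqref{class r=const}. The skeleton is the same as the paper's — constant radius forces the double-root conditions $R(r_0)=R'(r_0)=0$ via Prop.~\ref{initial conditions and zeroes}, which are then solved for $(\Phi,\mathcal{Q})$ — but you handle the degenerate branch differently, and that is the interesting point of comparison. The paper solves the system in the quartic form $\mathcal{R}(r)=\mathcal{R}'(r)=0$ and obtains \emph{two} families of solutions, the stated one and $\Phi=(r^2+a^2)/a$, $\mathcal{Q}=-r^4/a^2$; the spurious family is then discarded because it gives $w=a^2-\Phi^2-\mathcal{Q}=-2r^2<0$, contradicting the inequality $u_-+u_+=w/a^2>0$ established earlier in \S\ref{case Q<0} from the $\theta$-equation with $\mathcal{Q}<0$. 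You instead write $R=\mathbb{P}^2-\Delta K$ and first show $K>0$: if $K=0$ the geodesic is principal by Prop.~\ref{eq principal nulls}, so $\mathbb{D}\equiv 0$ and $\mathbb{P}=E\rho^2\neq 0$ (using $E\neq 0$ from Prop.~\ref{Q<0 condition}), which is incompatible with $R(r_0)=0$. Since the paper's second family is precisely the locus $K/E^2=\mathcal{Q}+(\Phi-a)^2=0$, your exclusion of the principal case is exactly equivalent to the paper's exclusion of \eqref{class r=const wrong}, but it is self-contained (it does not invoke the $u_\pm$ analysis) and, combined with $\Delta(r_0)>0$ from Prop.~\ref{geod Q<0}, it makes the elimination linear in $K$ and $\mathbb{P}(r_0)$, so the stated pair emerges as the unique solution rather than as the surviving one of two. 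What the paper's route buys is brevity given the machinery already set up in that subsection; what yours buys is a cleaner algebraic path and a geometric explanation of where the spurious root comes from.
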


\begin{proof}
Since $E\neq 0$ by Prop.  \ref{Q<0 condition},  we can divide the $r$-equation by $E^2$ to get

\begin{align}
\bigg( \frac{\rho^2}{E}\bigg)^2(r')^2=r^4+(a^2-\Phi^2-\mathcal{Q})r^2+2M\bigg( (a-\Phi)^2+\mathcal{Q}\bigg)r-a^2\mathcal{Q}=:\mathcal{R}(r),
\end{align}
where $\Phi:=L/E$ and $\mathcal{Q}:=Q/E^2$.  A geodesic has constant radial behaviour if and only if  $\mathcal{R}(r)=0$ and $d\mathcal{R}(r)/dr=0$.  These two equations can be solved for $\mathcal{Q}$ and $\Phi$.  The two resulting pairs of constants of motion are \eqref{class r=const} and

\begin{align}
\Phi=\Phi(r)&=\frac{r^2+a^2}{a}\;,\hspace{3.68cm} \mathcal{Q}=\mathcal{Q}(r)=-\frac{r^4}{a^2},\label{class r=const wrong}
\end{align}
where $r$ is the constant radius of the geodesic.  Recall that $u_{-}+u_{+}=w/a^2>0$.  However \eqref{class r=const wrong} implies

\begin{align*}
w=-2r^2<0.
\end{align*}
Hence no null geodesic with constant radial coordinate and $\mathcal{Q}<0$ satisfies \eqref{class r=const wrong}.
\end{proof}

The first integral $\Phi=\Phi(r)$  is given by \eqref{class r=const}.  Hence $B(r)$ reduces to

\begin{align}\label{B(r) expression}
B(r)=\frac{r^2(3M+r)}{r-M}.
\end{align}

\begin{oss}
Notice that if $B(r)\geq 0$,  there would be nothing to prove.  Indeed,  in this case the $t$-coordinate would be non-decreasing,  hence non-periodic.  However $B(r)<0$ for a negative $r$ sufficiently close to zero.  (In fact,  this holds for all spherical geodesics with $Q<0$,  see Appendix \ref{appendix motivation}.) Moreover,  numerical simulation shows that the $t$-component is not necessarily monotonic for a $Q<0$ geodesic with constant $r<0$ close to zero,  see Fig.  \ref{figure non-monotonicity}.  Therefore we shall evaluate $\Delta t$ on a full $\theta$-oscillation.

\begin{figure}[H]
\centering
\includegraphics[scale=0.5]{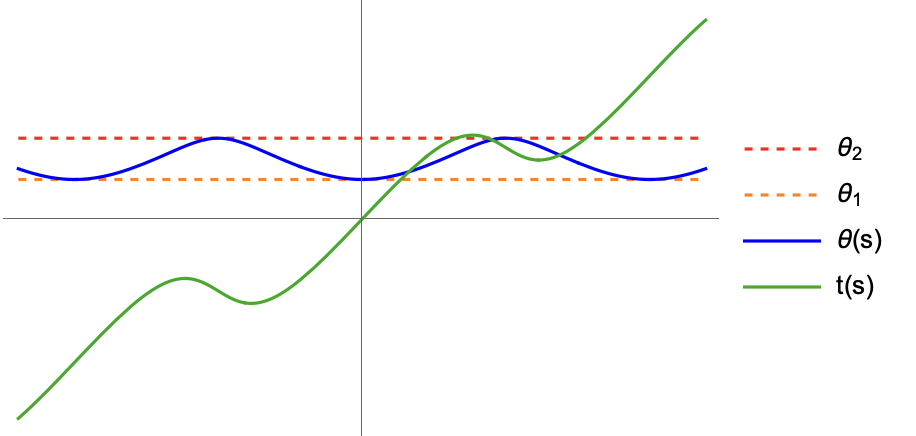} 
\caption{$\theta(s)$ obtained numerically from the geodesic equation with \\ $a=3,M=8,  r=\bar{r}=-1<0$  and initial conditions $\gamma(0)=(0,\bar{r},\theta_1,0)$ and $\gamma'(0)=(1,0,0,0.710)$.   Here $\mathcal{Q}=-1.252$ and $\Phi=1.407$.}
\label{figure non-monotonicity}
\vspace*{1cm}
\end{figure}
\end{oss}

\noindent
We are now finally ready to rule out constant radius geodesics in the subcase  $E\neq 0$,  $Q<0$.

By contradiction,  suppose there exists such a closed null geodesic $\gamma:I\rightarrow K^*$ with non-constant coordinate functions $s\mapsto t(s),\theta(s),\phi(s)$ and constant negative radial coordinate such that $B(r)<0$.  The differential equation \eqref{t theta eq} has the form

\begin{align*}
\frac{dt}{d\theta}=F(\theta),
\end{align*}
for some function $F$.  The variation of the $t$-coordinate on a full $\theta$-oscillation is given by 

\begin{align*}
\Delta t= 2\int_{\theta_1}^{\theta_2}F(\theta) d\theta.
\end{align*}

\begin{oss}\label{remark n oscillations}
Notice the factor $"2"$ in the last expression.  On a full $\theta$-oscillation,  we have 
\begin{align*}
\int_{\theta_1}^{\theta_2} F(\theta)d\theta +\int_{\theta_2}^{\theta_1} -F(\theta)d\theta=2\int_{\theta_1}^{\theta_2}F(\theta) d\theta.
\end{align*}
\end{oss}

Therefore the variation of the $t$-coordinate after $n$ $\theta$-oscillations is $n\Delta t$ because of the periodicity of the $\theta$-coordinate.  If the geodesic is closed,  $\Delta t=0$, otherwise the coordinate $t(s)$ cannot be periodic.  Hence it suffices to study what happens on a single $\theta$-oscillation.

\begin{oss}\label{same integrals}
A motion of the kind $\pi\geq \theta_4\geq\theta\geq\theta_3>\pi/2$ produces the same integrals since in this $\theta$-interval $\cos\theta<0$,  hence with the substitution $u=\cos^2\theta$  we have $d\theta=\frac{1}{2}\frac{du}{\sqrt{u}\sqrt{1-u}}$.  Therefore
\begin{align*}
\int_{\theta_1}^{\theta_2} \frac{d\theta}{\sqrt{\Theta(\theta)}}=\int_{\theta_3}^{\theta_4} \frac{d\theta}{\sqrt{\Theta(\theta)}},\hspace{1cm}  \int_{\theta_1}^{\theta_2} \frac{\cos^2\theta\;d\theta}{\sqrt{\Theta(\theta)}}=\int_{\theta_3}^{\theta_4} \frac{\cos^2\theta\;d\theta}{\sqrt{\Theta(\theta)}}.
\end{align*}
Hence $\Delta t$ is the same.
\end{oss}

So without any loss of generality,  we may consider a motion of the type $0\leq \theta_1\leq\theta\leq\theta_2<\pi/2$.  Then we can integrate \eqref{t theta eq} on a full oscillation to get

\begin{align}
\Delta t=2B(r)\int_{\theta_1}^{\theta_2}\frac{d\theta}{\sqrt{\Theta(\theta)}}+2a^2 \int_{\theta_1}^{\theta_2}\frac{cos^2\theta\; d\theta}{\sqrt{\Theta(\theta)}}.
\end{align}

We now have to compute the following integrals

\begin{align*}
I_1:=&\int_{\theta_1}^{\theta_2} \frac{d\theta}{\sqrt{\Theta(\theta)}},\\
I_2:=&\int_{\theta_1}^{\theta_2} \frac{\cos^2\theta\;d\theta}{\sqrt{\Theta(\theta)}}.
\end{align*}

Let us start from the first integral:
\begin{align}
I_1=-\frac{1}{2}\int_{u_{+}}^{u_{-}}\frac{du}{\sqrt{u}\sqrt{\tilde{\Theta}(u)}},
\end{align}
where we have used the substitution $u:=\cos^2\theta$,  hence $d\theta=-\frac{1}{2}\frac{du}{\sqrt{u}\sqrt{1-u}}$ since\\ $\sin\theta\geq 0$ and $\cos\theta>0$ if $\theta_1\leq \theta\leq\theta_2$.  Now we can use \eqref{theta di u} and the substitution\\ $u=:u_{-}+(u_{+}-u_{-})y^2$ adopted in \cite{Kapec_2019} to get

\begin{align*}
\nonumber   I_1=&\frac{1}{2}\int_{u_{-}}^{u_{+}}\frac{du}{\sqrt{u}\sqrt{a^2(u_{+}-u)(u-u_{-})}}\\ 
\nonumber =&\frac{1}{2|a|}\int_0^1 \frac{2(u_{+}-u_{-})ydy}{\sqrt{u_{-}+(u_{+}-u_{-})y^2}\sqrt{\big(u_{+}-u_{-}-(u_{+}-u_{-})y^2\big)(u_{+}-u_{-})y^2}}\\ 
\nonumber =& \frac{1}{|a|}\int_0^1 \frac{dy}{\sqrt{u_{-}+(u_{+}-u_{-})y^2}\sqrt{1-y^2}}\\ \nonumber
 \nonumber=& \frac{1}{|a|\sqrt{u_{-}}} \int_0^1 \frac{dy}{\sqrt{1-y^2}\sqrt{1-\big( 1-\frac{u_{+}}{u_{-}} \big) y^2}}.
\end{align*}
With the same substitutions,  we also get

\begin{align*}
\nonumber I_2=&-\frac{1}{2}\int_{u_{+}}^{u_{-}}\frac{udu}{\sqrt{u}\sqrt{\tilde{\Theta}(u)}}\\
\nonumber =&\frac{1}{2} \int_{u_{-}}^{u_{+}}\frac{udu}{\sqrt{u}\sqrt{a^2(u_{+}-u)(u-u_{-})}}\\
\nonumber =& \frac{1}{|a|}\int_0^1 \frac{\sqrt{u_{-}+(u_{+}-u_{-})y^2}}{\sqrt{1-y^2}}dy\\
\nonumber =& \frac{\sqrt{u_{-}}}{|a|}\int_0^1 \frac{\sqrt{1-\big(1-\frac{u_{+}}{u_{-}} \big)y^2}}{\sqrt{1-y^2}}dy.\\
\end{align*}

Then with the definition of the elliptic integrals in Appendix \ref{elliptic appendix}  we have

\begin{align}
I_1=&\frac{1}{|a|\sqrt{u_{-}}} \mathcal{K}\bigg(1-\frac{u_{+}}{u_{-}} \bigg),\\
I_2=&\frac{\sqrt{u_{-}}}{|a|} \mathcal{E} \bigg( 1-\frac{u_{+}}{u_{-}} \bigg).
\end{align}

Hence,  we get

\begin{align}
\Delta t=\frac{2B(r)}{|a|\sqrt{u_{-}}} \mathcal{K}\bigg(1-\frac{u_{+}}{u_{-}} \bigg) +  2|a|\sqrt{u_{-}} \mathcal{E} \bigg( 1-\frac{u_{+}}{u_{-}} \bigg).
\end{align}

Note that,  since $u_{+}>u_{-}>0$,  we have $1-\frac{u_{+}}{u_{-}}<0$,  and hence $\mathcal{E}(1-u_{+}/u_{-})>\mathcal{K}(1-u_{+}/u_{-})>0$ (see Appendix \ref{elliptic appendix}).
However,  the prefactor of $\mathcal{E}$ does not dominate the opposite of the prefactor of $\mathcal{K}$ for every negative $r$,  as one may check substituting $\Phi(r)$ and $\mathcal{Q}(r)$ from \eqref{class r=const} into $u_{-}$ given by \eqref{formula for u pm}.  

From now on set $x:=1-u_{+}/u_{-}$.  The elliptic integral $\mathcal{K}$ can be written as a hypergeometric function (see \ref{elliptic as hypergeom}): 

\begin{align*}
\mathcal{K}(x)=\frac{\pi}{2}F\bigg(\frac{1}{2},\frac{1}{2};1;x\bigg).
\end{align*}
Using the Pfaff transformation (see \ref{hypergeometric trick})

\begin{align}\label{hypergeo trick}
F\bigg( \alpha,\beta;\gamma;x\bigg)=(1-x)^{-\alpha}F\bigg( \alpha,\gamma-\beta;\gamma;\frac{x}{x-1} \bigg),
\end{align}
we can decrease the modulus of the prefactor in front of the elliptic integral $\mathcal{K}$:

\begin{align}\label{transformation k}
\mathcal{K}(x)=\frac{\sqrt{u_{-}}}{\sqrt{u_{+}}}\mathcal{K}\bigg(\frac{x}{x-1}\bigg).
\end{align}
Hence we get

\begin{align}\label{last espression for delta t}
\Delta t=2|a|\sqrt{u_{-}}\mathcal{E}(x)+\frac{2B(r)}{|a|\sqrt{u_{+}}}\mathcal{K}\bigg(\frac{x}{x-1}\bigg).
\end{align}
Now we compare the elliptic integrals,  after the Pfaff transformation.  Since $x<0$,  we have

\begin{align}\label{inequality elliptic integrals}
\mathcal{E}(x)>\mathcal{K}\bigg(\frac{x}{x-1}\bigg)>0,
\end{align}
by Rmk.   \ref{remark integral estimate}.  Next we claim that the prefactors of the elliptic integrals in  \eqref{last espression for delta t} satisfy

\begin{align}\label{ineq prefactors}
2|a|\sqrt{u_{-}}>-\frac{2B(r)}{|a|\sqrt{u_{+}}}.
\end{align}
Indeed,  both sides of the inequality are positive,  so we can square them and use that $u_{+}u_{-}=-\mathcal{Q}(r)/a^2$ by \eqref{theta di u} to get an equivalent inequality

\begin{align*}
-\mathcal{Q}(r)a^2>B^2(r),
\end{align*}
where $\mathcal{Q}(r)$ is given by \eqref{class r=const} and $B(r)$ by \eqref{B(r) expression} ,  or equivalently 

\begin{align*}
(-12Mr^2-4a^2M)r>0.
\end{align*}
This last inequality is clearly satisfied in $r<0$.  Combining  \eqref{last espression for delta t},  \eqref{inequality elliptic integrals} and \eqref{ineq prefactors},  we conclude that $\Delta t>0$ for all $r<0$,  which shows that the spherical geodesics cannot be closed.  

In Fig. \ref{plot Delta t(r)} we see the plot of $\Delta t$ given by \eqref{last espression for delta t} as function of the fixed radius $r$ after the substitutions of $\Phi(r)$ and $\mathcal{Q}(r)$ from \eqref{class r=const} into $u_{\pm}$ given by \eqref{formula for u pm}.

\begin{figure}[H]
\centering
\includegraphics[scale=0.6]{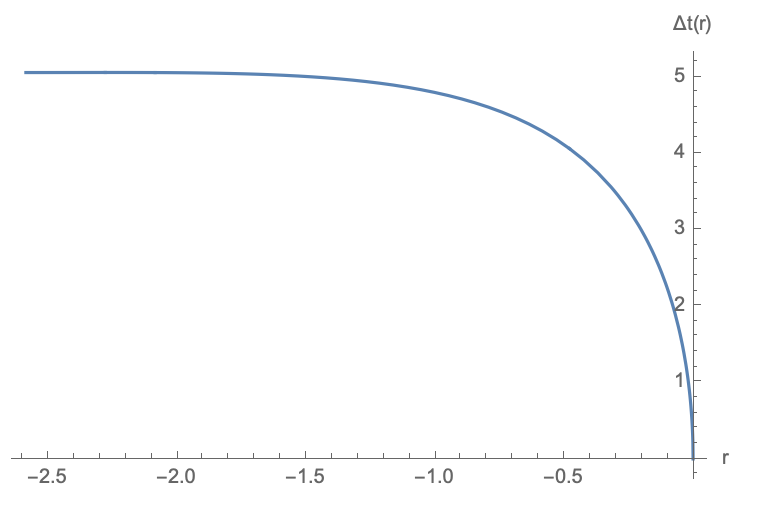} 
\caption{Plot of $\Delta t=\Delta t(r)$ for $r<0$ close to zero with $a=5,\,M=7$. }
\label{plot Delta t(r)}
\vspace{5mm}
\end{figure}

We have ruled out all the possibilities on Fig. \ref{figure steps of proof},  therefore there are no closed null geodesics in the Kerr-star spacetime.

\section{(Un)boundedness of null geodesics in the  Kerr-star spacetime}\label{section unbounded}

\subsection{Geodesics approaching a horizon}

\begin{defn}
A maximal geodesic $\gamma:(a,b)\rightarrow K^*$,  not entirely contained in the horizon,  approaches a horizon if 
\[
\lim_{s\to b^-}r\big(\gamma(s)\big)=r_{\pm}\hspace{1cm}\textrm{or}\hspace{1cm}\lim_{s\to a^+}r\big(\gamma(s)\big)=r_{\pm}.
\]
\end{defn}

\begin{prop}\label{prop unbounded geodesics approaching horizons}
Let $\gamma:[a,b)\rightarrow K^*$ be a null geodesic approaching a horizon $\mathscr{H}=\{r=r_{\pm}\}$ as $s\rightarrow b^-$.  Then $\gamma$ is unbounded.
\end{prop}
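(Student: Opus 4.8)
The plan is to argue by contradiction: suppose the image of $\gamma$ lies in a compact set $C\subset K^*$. Since $C$ is compact in $K^*$ it is disjoint from $\Sigma$, and the Kerr-star coordinate $t^*$ is bounded on $C$; recall also that $\gamma$ is maximal (by the definition of ``approaching a horizon''). The first step is a reduction. As $\mathscr{H}$ is totally geodesic (Prop.~\ref{H is closed totally geod}) and $\gamma$ is not contained in it, $\gamma$ is nowhere tangent to $\mathscr{H}$; and since the future null cone lies on the $-\partial^*_r$ side of $\mathscr{H}$ (Prop.~\ref{H is closed totally geod}) while each component $\{r=r_\pm\}$ separates $K^*$, a causal geodesic crosses each component of $\mathscr{H}$ at most once --- this is the argument used in the proof of Prop.~\ref{oss event horizon}. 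Hence for $s$ near $b$ the geodesic stays in a single \emph{open} BL block $B$, and $r\circ\gamma\to r_\pm$, the value $r_\pm$ being a boundary value of $B$.

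Along $\gamma$ one has $R(r)=-\Delta(r)K+\mathbb{P}^2(r)$ in the null case, so $R(r_\pm)=\mathbb{P}(r_\pm)^2$, and I would split on this quantity. If $\mathbb{P}(r_\pm)\neq 0$, then $|r'|$ is bounded away from $0$ near $b$, so $r\circ\gamma$ is strictly monotone there and attains $r_\pm$ in \emph{finite} parameter; thus $b<\infty$, and the first-order system of Prop.~\ref{differential equations of geodessics} written in Kerr-star coordinates --- whose a~priori singular $\Delta^{-1}$-terms are regularised precisely by the sign of $r'$ of a transverse crossing --- shows that $\gamma(s)$ and $\gamma'(s)$ converge as $s\to b^-$ to a point of $\mathscr{H}$ and a null vector transverse to it. Then $\gamma$ extends past $b$ into the adjacent block, contradicting maximality.

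It remains to handle $\mathbb{P}(r_\pm)=0$. If moreover $K>0$, then $r_\pm$ is a simple zero of $R$, which forces $B$ to be block II and $|r'|\asymp|r-r_\pm|^{1/2}$; integrating, $b<\infty$ and $|r-r_\pm|\asymp(b-s)^2$, and substituting into $\tfrac{dt^*}{ds}=t'+\tfrac{r^2+a^2}{\Delta}\,r'$ one finds a term of order $(b-s)^{-1}$, so that $t^*\circ\gamma$ diverges logarithmically as $s\to b^-$ --- contradicting the boundedness of $t^*$ on $C$. If instead $K=0$, then $\gamma$ is principal (Prop.~\ref{eq principal nulls}), hence $\theta$ is constant; $\gamma$ is not a restphoton, so $E\neq 0$ (Prop.~\ref{E=L=K=0 prop}), and now $r$ approaches $r_\pm$ exponentially, so $b=\infty$. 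Boundedness of $t^*$ forces $\tfrac{dt^*}{ds}\to 0$; then $\gamma(s)$ converges to a point $p\in\mathscr{H}$ --- necessarily off the axis, by the computation in the proof of Prop.~\ref{no closed geodesics in axis} --- and, since $r'$, $\theta'$ and $\tfrac{dt^*}{ds}$ all tend to $0$, the velocity $\gamma'(s)$ converges to a multiple $v$ of $\partial_\phi$. As $\partial_\phi$ is spacelike at $p$ while $v$ must be null, $v=0$, whence $E=-\mathbf{g}(\gamma',\tilde{\partial_t})=-\mathbf{g}_p(v,\tilde{\partial_t})=0$, a contradiction. This exhausts all possibilities.

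The main obstacle is the principal case $\mathbb{P}(r_\pm)=0$, $K=0$: it is the one situation where $\gamma$ genuinely asymptotes to $\mathscr{H}$ over infinite affine time with bounded radial coordinate, so no ``escapes in finite parameter'' device applies. Its resolution rests on two points --- that $t^*$ and the Boyer--Lindquist time $t$ differ by $\mathcal{T}(r)$, which diverges logarithmically at $\mathscr{H}$, so that boundedness of $t^*$ is a real constraint; and the soft argument that a geodesic with $\gamma(s)\to p$ and $\gamma'(s)\to v$ as $s\to\infty$ must have $v=0$, which converts boundedness into $E=0$. The remaining bookkeeping (which block $B$ occurs in each case, the asymptotic rate of $r-r_\pm$, and continuity in $r$ of the velocity components near $\mathscr{H}$ in Kerr-star coordinates) is routine.
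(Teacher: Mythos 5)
Your proof takes a completely different, ``hard'' route from the paper: the paper's argument is a soft compactness one (take $s_m\to b$ with $\gamma(s_m)\to x_0\in\mathscr{H}$, rescale the velocities by an auxiliary Riemannian metric so that $c_m\gamma'(s_m)\to v\neq 0$, and use continuous dependence on initial data to show that $\gamma$ either crosses the horizon or shadows an unbounded restphoton), whereas you classify the approach to $\mathscr{H}$ via $\mathbb{P}(r_\pm)$ and $K$ and read off the asymptotics of $t^*$ from the first-order system. Your cases $\mathbb{P}(r_\pm)=0$, $K>0$ and $\mathbb{P}(r_\pm)=0$, $K=0$ are essentially correct (modulo small points: in the principal case you should say that $dt^*/ds$ \emph{converges} because it is an explicit continuous function of $(r,\theta)$ and of the fixed sign of $r'$, so that boundedness of $t^*$ forces its limit to be zero; and a subsequential limit point $p$ off the axis suffices --- you do not need $\gamma(s)$ itself to converge).

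The genuine gap is in the case $\mathbb{P}(r_\pm)\neq 0$. You assert that the $\Delta^{-1}$-terms in $dt^*/ds$ are ``regularised by the sign of $r'$ of a transverse crossing'' and conclude that $\gamma$ converges to a point of $\mathscr{H}$ and extends, contradicting maximality. But writing $\rho^2\,dt^*/ds=a\mathbb{D}+(r^2+a^2)\bigl(\mathbb{P}+\rho^2 r'\bigr)/\Delta$ and using $(\rho^2r')^2-\mathbb{P}^2=-\Delta K$, the quotient $(\mathbb{P}+\rho^2r')/\Delta=-K/(\rho^2r'-\mathbb{P})$ is regular only when $\rho^2 r'\to-\mathbb{P}(r_\pm)$; when $\rho^2 r'\to+\mathbb{P}(r_\pm)$ the numerator tends to $2\mathbb{P}(r_\pm)\neq 0$ and $dt^*/ds$ blows up like $(b-s)^{-1}$. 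This second sign really occurs (e.g.\ for the outgoing principal null congruence run towards the horizon, for which $\gamma'=f\,l$ gives $\rho^2r'=+\mathbb{P}$): such a geodesic reaches $r=r_\pm$ in finite affine parameter but does \emph{not} converge to a point of $K^*$, so there is no extension and no contradiction with maximality --- these are precisely the inextendible geodesics that ``approach a horizon'' without crossing it, i.e.\ the main objects the proposition is about. The fix is cheap and already in your toolbox: in that subcase $t^*\circ\gamma$ diverges logarithmically, contradicting boundedness exactly as in your $K>0$ case; but as written the argument omits it. The paper's soft argument sidesteps this entirely, since in its Case 1 the rescaled limit velocity being transverse already forces a crossing of $\mathscr{H}$ by continuous dependence, without ever integrating $dt^*/ds$.
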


\begin{oss}\label{oss one side of horizon}
A geodesic can cross a horizon at most once.  Therefore we may assume that the image $\gamma([a,b))$ is on one side of a horizon,  increasing the parameter $a$ if necessary.
\end{oss}

\begin{proof}

Let $\gamma$ be a bounded null geodesic and let $\mathbf{h}$ be a fixed (arbitrary) auxiliary Riemannian metric on $K^*$.  Choose a sequence $s_n\rightarrow b$ such that $r\big( \gamma(s_n)\big)\rightarrow r_{\pm}$ and $\gamma(s_n)\rightarrow x_0$.  Then $x_0\in\mathscr{H}$.  Possibly passing to a new subsequence $s_m\rightarrow b$,  we may assume
\[
c_m \gamma'(s_m)\rightarrow v\in T_{x_0}K^*,
\]
with $c_m$ are $\mathbf{h}$-normalizing constants.  Note that $c_m \gamma'(s_m)=\beta'_m(0)$ where $\beta_m(s):=\gamma(s_m+c_m s)$ is an affine reparametrization of $\gamma$ with $\beta_m(0)=\gamma(s_m)$.  Now we distinguish two cases.\\
Case 1: $v\notin T_{x_0}\mathscr{H}$.  Then the geodesic $\beta$ with $\beta(0)=x_0$ and $\beta'(0)=v$ crosses the horizon.  By the continuous dependence of solution of an ODE on initial conditions and $\beta_m(0)\rightarrow x_0,  \beta_m'(0)\rightarrow v$,  $\beta_m$ must also cross the horizon for $m\rightarrow +\infty$.  However $Im\;\beta_m=Im\;\gamma$,  so $\gamma$ crosses the horizon,  which contradicts Rmk.  \ref{oss one side of horizon}.\\
Case 2: $v\in T_{x_0}\mathscr{H}$.  The geodesic $\beta$ with $\beta(0)=x_0$ and $\beta'(0)=v$ lies in the horizon,  hence it is a restphoton by Prop. \ref{H is closed totally geod} and is unbounded by Prop.  \ref{restphotons are unbounded}.  Fix now a compact interval $[a',b']$ so that $\beta([a',b'])$ is not in a given compact set.  Then by the  continuous dependence of solution of an ODE on initial conditions,  $\beta_m([a',b'])$ is not contained in the given compact set as $m\rightarrow +\infty$,  so the same holds also for $Im\;\gamma$,  which contradicts the boundedness of $\gamma$.
\end{proof}

\begin{cor}\label{incomplete are unbounded}
Incomplete null geodesics in $K^*$ are not bounded.
\end{cor}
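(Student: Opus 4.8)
The plan is to argue by contradiction, the substance being to upgrade ``bounded'' to ``complete'' and then contradict maximality. So suppose $\gamma\colon I\to K^*$ is an incomplete null geodesic; being inextendible, $I$ is a proper subinterval of $\mathbb R$, and after reversing the affine parameter I may write $I=(\alpha,\beta)$ with $\beta<\infty$. Assume toward a contradiction that $\gamma$ is bounded, so its image lies in a compact set $C\subset K^*$, and fix an auxiliary Riemannian metric $\mathbf h$ on $K^*$. I will show $\gamma$ extends past $\beta$. It is enough to prove that $\|\gamma'(s)\|_{\mathbf h}$ stays bounded as $s\to\beta^-$: the integral curve $s\mapsto\big(\gamma(s),\gamma'(s)\big)$ of the geodesic spray on $TK^*$ would then eventually be confined to a compact subset of $TK^*$ (a compact piece of the base, with a closed $\mathbf h$-ball in the fibres), and the escape lemma for flows forces this integral curve, hence $\gamma$, to be defined beyond $\beta$ --- contradicting the maximality of $I$.

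First I would rule out that $\gamma(s)$ accumulates on the horizon $\mathscr{H}$ as $s\to\beta^-$, by a continuity argument entirely analogous to the proof of Proposition \ref{prop unbounded geodesics approaching horizons}. Were there $s_n\to\beta$ with $\gamma(s_n)\to x_0\in\mathscr{H}$, one $\mathbf h$-normalizes by $c_n:=\|\gamma'(s_n)\|_{\mathbf h}^{-1}$ and passes to a subsequence so that $c_n\gamma'(s_n)\to v$ with $\|v\|_{\mathbf h}=1$; the reparametrized geodesics $\beta_n(s):=\gamma(s_n+c_ns)$ then converge, uniformly on compact parameter intervals, to the geodesic $\beta$ with $\beta(0)=x_0$ and $\beta'(0)=v$. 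If $v\notin T_{x_0}\mathscr{H}$, then $\beta$ crosses $\mathscr{H}$ transversally at $s=0$, hence so does $\beta_n$ for $n$ large; since the image of $\beta_n$ equals that of $\gamma$, this produces parameters tending to $\beta$ at which $\gamma$ meets $\mathscr{H}$, contradicting that a geodesic crosses each horizon component at most once (Remark \ref{oss one side of horizon}). If $v\in T_{x_0}\mathscr{H}$, then $v$ is null and tangent to $\mathscr{H}$, hence proportional to $V|_{\mathscr{H}}$, so by Proposition \ref{H is closed totally geod} the geodesic $\beta$ is an affine reparametrization of a restphoton; but restphotons are unbounded (Proposition \ref{restphotons are unbounded}), so the image of $\gamma$ (which contains the images of the $\beta_n$) would be unbounded, against its containment in $C$. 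Therefore the limit points of $\gamma(s)$ as $s\to\beta$ form a compact subset of $C\setminus\mathscr{H}$, and for some $s_1<\beta$ the set $C':=\overline{\gamma([s_1,\beta))}$ is a compact subset of $K^*\setminus\mathscr{H}$.

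It then remains to bound $\gamma'$ on $C'$, which is routine once one is away from $\mathscr{H}$, where the first-order system of Proposition \ref{differential equations of geodessics} is nonsingular. In Boyer--Lindquist coordinates $|r'|=\sqrt{R(r)}/\rho^2$ and $|\theta'|=\sqrt{\Theta(\theta)}/\rho^2$ are bounded on $C'$ (polynomials of bounded argument over the nowhere-vanishing $\rho^2$), while $\rho^2 t'$ and $\rho^2\phi'$ are the explicit expressions of \eqref{geodes diff equations}, involving only $\Delta(r)^{-1}$ (bounded on $C'$) and $\sin^{-2}\theta$; the factor $\sin^{-2}\theta$ is harmless because either $\gamma$ meets the axis $A$, whence $L=0$ and $\mathbb{D}(\theta)/\sin^2\theta=-aE$ is bounded (as recalled in \S\ref{study of geodesic equations}, working in a nonsingular chart across $A$), or $L\neq0$, whence $\Theta(\theta)\geq0$ keeps $\theta$ bounded away from $0$ and $\pi$. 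Hence $\gamma'$ is $\mathbf h$-bounded on $C'$, which is what the first paragraph requires, and the contradiction follows; thus an incomplete null geodesic cannot be bounded. The one genuine obstacle is the horizon step: on $\mathscr{H}$ the Boyer--Lindquist equations of motion (and the Kerr-star ones for $t^*$, $\phi^*$) degenerate, so the velocity cannot be controlled there directly, which is precisely why one must first excise all accumulation on $\mathscr{H}$ by the argument borrowed from Proposition \ref{prop unbounded geodesics approaching horizons}.
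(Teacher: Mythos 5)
Your proof is correct in substance, but it takes a genuinely different route from the paper. The paper's proof is a short case analysis resting on an external classification result (Prop.~$4.3.9$ of \cite{KBH_book}): after using Prop.~\ref{prop unbounded geodesics approaching horizons} to discard geodesics approaching a horizon, an incomplete null geodesic must either crash into the ring singularity (hence leave every compact subset of $K^*$, since $\Sigma\notin K^*$), or lie in $A$ (where incompleteness again forces approach to a horizon), or lie in $\mathscr{H}$ (hence be an unbounded restphoton by Prop.~\ref{restphotons are unbounded}). You instead give a self-contained extension argument: bounded plus velocity-bounded implies the integral curve of the geodesic spray stays in a compact subset of $TK^*$, so the escape lemma extends $\gamma$ past $\beta$, contradicting maximality. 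In effect you are reproving the relevant part of O'Neill's classification rather than citing it; what this buys is independence from that reference, at the cost of having to control $\|\gamma'\|_{\mathbf h}$ by hand. Your two pieces of real work are sound: the excision of accumulation points on $\mathscr{H}$ is exactly the argument of Prop.~\ref{prop unbounded geodesics approaching horizons} (with the same implicit use of inextendibility to identify $\operatorname{Im}\beta_n$ with a piece of $\operatorname{Im}\gamma$), and the velocity bound from the first-order system \eqref{geodes diff equations} works on a compact set avoiding $\mathscr{H}$ and $\Sigma$. The one place where you should be slightly more careful is the axis: bounding the coordinate derivatives $t',r',\theta',\phi'$ does not by itself bound $\|\gamma'\|_{\mathbf h}$ where the BL frame degenerates; the correct quantity to control near $A$ is the $S^2$-component measured in the round metric, i.e.\ $\theta'^2+\sin^2\theta\,\phi'^2$, which is indeed bounded since $L=0$ there and $\sin^2\theta\,\phi'^2=\sin^2\theta\big(-aE+a\mathbb{P}(r)/\Delta(r)\big)^2/\rho^4$. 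You flag this point, and with that fix the argument closes.
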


\begin{proof}
Let $\gamma$ be an incomplete null geodesic.  By Prop. \ref{prop unbounded geodesics approaching horizons},  we can assume that $\gamma$ is not approaching a horizon.  Then $\gamma$ either crashes into the ring singularity or is contained in the axis $A$ or in a horizon $\mathscr{H}$ by Prop.  $4.3.9$ of \cite{KBH_book}.  If $\gamma$ crashes into the singularity,  by definition it cannot be contained in a compact set of $K^*$,  hence cannot be bounded.  If $\gamma\in A$,  the incompleteness of $\gamma$ in $K^*$ can only arise when the geodesic is approaching a horizon,  hence it cannot be bounded by Prop. \ref{prop unbounded geodesics approaching horizons},  see Lemma $4.11.2$ and preceding discussion in \cite{KBH_book}.  If instead $\gamma\in\mathscr{H}$,  it is a restphoton by Prop. \ref{H is closed totally geod} and by Prop.  \ref{restphotons are unbounded} it is not bounded.  
\end{proof}

\subsection{Radial behaviour of complete null geodesics}

\begin{prop}\label{prop.  r1 and r2}
Let $\gamma:\mathbb{R}\rightarrow K^*$ be a complete null geodesic and set $\inf_{s\in\mathbb{R}} r(\gamma(s))=r_1$ and $\sup_{s\in\mathbb{R}} r(\gamma(s))=r_2$.  Then $r_1$ and $r_2$ are either zeroes of the associated polynomial $R(r)$ or infinite.
\end{prop}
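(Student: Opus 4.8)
The plan is to analyze the possible behaviours of $r\circ\gamma$ at the ends of the geodesic and show that the only ways $r$ can fail to take an extremal value at a zero of $R$ are ruled out either because $\gamma$ is complete or because the extremum is attained in the interior. First I would treat the case where an extremum, say $r_1=\inf_s r(\gamma(s))$, is attained at some finite parameter $s_0\in\mathbb{R}$: then $(r\circ\gamma)'(s_0)=0$, so the radial equation $(\rho\circ\gamma)^4\,[(r\circ\gamma)']^2=R(r\circ\gamma)$ forces $R(r_1)=0$ directly (using that $\rho$ never vanishes on $K^*$), and similarly for $r_2$. So the interesting case is when the infimum (resp. supremum) is only a limiting value as $s\to\pm\infty$, i.e. is not attained.

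Next I would assume $r_1$ is finite and not attained, so there is a sequence $s_n\to+\infty$ (or $-\infty$) with $r(\gamma(s_n))\to r_1$ and $(r\circ\gamma)'(s_n)\to 0$; passing to the limit in the radial equation again gives $R(r_1)=0$, unless $r_1$ is itself one of the horizon radii $r_\pm$ where the argument via the $r,\theta$-equations still applies since those equations are regular on $\mathscr{H}$. The only remaining subtlety is that $r_1$ might be finite, not attained, with $r(\gamma(s))\to r_1$ but $(r\circ\gamma)'$ \emph{not} tending to zero along \emph{any} sequence going to the end — but this is impossible for a monotone-in-the-limit quantity: if $r(\gamma(s))\to r_1$ as $s\to+\infty$ with $r_1=\inf r$, then $r\circ\gamma$ must have a subsequence of critical points or a subsequence on which $(r\circ\gamma)'\to 0$ (otherwise $r\circ\gamma$ would be eventually strictly monotone with a finite limit and bounded-away-from-zero derivative, a contradiction). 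Actually the cleanest route is: since $R$ is a polynomial, $r\circ\gamma$ is a solution of an autonomous-type first-order ODE, and a complete solution with $\inf r=r_1$ finite and $R(r_1)>0$ would have $|(r\circ\gamma)'|$ bounded below near $r_1$, forcing $r\circ\gamma$ to cross below $r_1$ in finite parameter time — contradicting either completeness or the definition of the infimum.

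The one case where the argument genuinely breaks is $r_1=-\infty$ or $r_2=+\infty$, which is exactly why the statement allows "or infinite." The main obstacle I expect is the careful bookkeeping in the non-attained finite case: one must rule out that $r\circ\gamma$ oscillates or accumulates at $r_1$ without $R(r_1)=0$, and the rigorous way to do this uses Proposition \ref{initial conditions and zeroes} together with the completeness hypothesis — a complete geodesic cannot "stall" approaching a value $r_1$ with $R(r_1)>0$ because near such a point the ODE for $r$ is non-degenerate and solutions pass through $r_1$ in finite time, so $r_1$ would not be an infimum. Handling the horizon radii $r_\pm$ as possible extremal values requires noting that the radial equation extends regularly across $\mathscr{H}$ (as emphasized after Proposition \ref{differential equations of geodessics}), so the same limiting argument goes through there.
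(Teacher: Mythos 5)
Your proposal is correct, and in its final form it takes a genuinely different route from the paper in the key (non-attained) case. The paper splits according to whether $r'$ ever changes sign: a sign change forces a turning point at a simple zero of $R$ (Prop.~\ref{initial conditions and zeroes}), while eternal monotonicity plus the convergent integral of $r'$ yields a sequence $s_k\to+\infty$ with $r'(s_k)\to 0$, hence $R(r_1^*)=0$, and the ``cannot cross a zero of $R$'' observation then identifies $r_1^*$ with $r_1$. You instead argue directly by contradiction: if $R(r_1)>0$, then on a small band $[r_1,r_1+\epsilon]$ one has $R\geq\delta>0$ and $\rho^4\leq (r^2+a^2)^2$ bounded, so $|r'|\geq c>0$ there with constant sign; following the geodesic forward or backward (which completeness permits) forces $r$ to drop below $r_1$ in finite parameter time, contradicting the definition of the infimum. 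This buys you a uniform treatment of attained, monotone, and oscillatory behaviour without invoking Prop.~\ref{initial conditions and zeroes} at all in this case, and it makes the role of completeness explicit; the paper's argument is softer but needs the sign-change dichotomy and the no-crossing remark to pass from $\inf_{s\geq 0}$ to $\inf_{s\in\mathbb{R}}$. Two small points you should make explicit when writing it up: $R(r_1)\geq 0$ follows by continuity from $R(r(\gamma(s)))\geq 0$ and the fact that $r(\gamma(s))$ comes arbitrarily close to $r_1$, so the contradiction argument indeed yields $R(r_1)=0$; and your earlier sketch via a sequence with $(r\circ\gamma)'(s_n)\to 0$ (``otherwise eventually strictly monotone\dots'') does not by itself cover oscillatory approach to the infimum, so you should drop it in favour of the quantitative argument, which does.
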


\begin{proof}
Let $\gamma:\mathbb{R}\to K^*$ be a complete null geodesic.  Assume $r_1:=\inf_{s\in\mathbb{R}} r(\gamma(s))$ is finite.  We want to prove that $r_1$ is a zero of the polynomial $R(r)$ associated to $\gamma$. \\

If $r(\gamma(s))=\textrm{const}$ for every $s$,  then this $\textrm{const}=r_1$ and the result follows from Prop.  \ref{differential equations of geodessics}.\\

If $r(\gamma(s))\neq\textrm{const}$,  there is an $s_0\in\mathbb{R}$ such that $r'(s_0):=r'(\gamma(s_0))\neq 0$.  Changing the orientation on $\gamma$ and shifting the parameter,  if needed,  we may assume that $s_0=0$ and $r'(s_0)<0$.  If $r'(s)$ changes sign at some $s_1>0$,  then $r(s_1)$ is a simple zero of $R(r)$ and $r_1=r(s_1)$ by Prop.  \ref{initial conditions and zeroes}.  If instead $r'(s)<0$ for all $s>0$,  then $r(s)$ is decreasing to $r_1^*:=\inf_{s\geq 0}r(\gamma(s))\geq r_1$ as $s\to +\infty$.  Since the integral of $r'(s)$ converges over $[0,+\infty)$,  there is a sequence $s_k\to +\infty$ such that $r'(s_k)\to 0$.  From the $r$-equation of Prop.  \ref{differential equations of geodessics},  it follows that $R(r(s_k))\to 0$ and hence $R(r_1^*)=0$.  However,  the $r$-coordinate of a geodesic cannot cross a zero of $R(r)$ by Prop.  \ref{initial conditions and zeroes},  so we must have $r_1=r_1^*$ and $R(r_1)=0$.\\

A completely analogous argument shows that $r_2$ is either infinite or a zero of $R(r)$.

\end{proof}


\begin{cor}\label{bounded could exist only in compact}
The polynomial $R(r)$ of a complete bounded null geodesic is non-negative on the compact interval $[r_1,r_2]$ and $R(r_1)=R(r_2)=0$.
\end{cor}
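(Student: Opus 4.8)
The plan is to read off the corollary directly from Proposition \ref{prop.  r1 and r2}, after two small remarks: boundedness forces $r_1$ and $r_2$ to be finite, and the pointwise inequality $R\ge 0$ along $\gamma$ propagates to the full interval $[r_1,r_2]$ by continuity.

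First I would observe that if $\gamma\colon\mathbb{R}\to K^*$ is bounded, its image is contained in some compact set $C\subset K^*$. Since the radius function $r\colon K^*\to\mathbb{R}$ is continuous, $r(C)$ is compact, hence bounded, so $r_1=\inf_{s\in\mathbb{R}}r(\gamma(s))$ and $r_2=\sup_{s\in\mathbb{R}}r(\gamma(s))$ are both finite. Proposition \ref{prop.  r1 and r2} then leaves only one option: $R(r_1)=R(r_2)=0$.

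Next I would prove $R\ge 0$ on all of $[r_1,r_2]$. If $r_1=r_2$ the interval is the single point $\{r_1\}$ and $R(r_1)=0$, so there is nothing to check; assume $r_1<r_2$. The map $r\circ\gamma\colon\mathbb{R}\to\mathbb{R}$ is continuous on the connected set $\mathbb{R}$, hence its image is an interval whose infimum and supremum are $r_1$ and $r_2$; in particular this image contains $(r_1,r_2)$, which is dense in $[r_1,r_2]$. By Remark \ref{non negativitivity of polynomials}, $R(r(\gamma(s)))\ge 0$ for every $s$, so $R\ge 0$ on $(r_1,r_2)$, and since $R$ is a polynomial, in particular continuous, it follows that $R\ge 0$ on the closure $[r_1,r_2]$. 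Combined with $R(r_1)=R(r_2)=0$ this is exactly the assertion.

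This argument is short and I do not expect a genuine obstacle; the only point worth spelling out is that the fourth equation of Proposition \ref{differential equations of geodessics} (via Remark \ref{non negativitivity of polynomials}) controls $R$ only at radii actually attained by $\gamma$, so one must first note that $r\circ\gamma$ sweeps out a dense subset of $[r_1,r_2]$ before appealing to continuity of $R$.
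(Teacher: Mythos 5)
Your proposal is correct and matches the paper's (implicit) argument: the corollary is stated without proof precisely because it follows immediately from Proposition \ref{prop.  r1 and r2} (finiteness of $r_1,r_2$ forced by boundedness) together with Remark \ref{non negativitivity of polynomials} and continuity of $R$. Your added care about the image of $r\circ\gamma$ being dense in $[r_1,r_2]$ is a correct and reasonable way to spell out the last step.
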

Now we state a proposition which is the analog of Prop.  \ref{prop spacelike foliation} for complete null geodesic rays.

\begin{prop}\label{no bounded geodesics with foliation}
If a complete null geodesic $\gamma:[0,+\infty)\rightarrow K^*$ has at least one non-vanishing constant of motion among $E,  L,  K(E,L,Q)$ and its radial behaviour $r(s)$ is compactly contained in $\{0<r<r_{-}\}$ or in $\{r>r_{+}\}$,  then it cannot be bounded.
\end{prop}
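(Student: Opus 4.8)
The plan is to adapt the proof of Proposition~\ref{prop spacelike foliation} to future‑complete null rays. There, the closedness of a null geodesic forced a critical point of the Boyer--Lindquist time function $t$ along it, producing a null vector tangent to a spacelike level set of $t$, which is impossible. Here closedness is unavailable, so the substitute will be this: along a null geodesic whose radial coordinate stays in the relevant region, $t$ is strictly monotone; if such a geodesic were bounded and complete, $t\circ\gamma$ would be monotone and bounded, hence convergent, and a compactness argument at an accumulation point of $\gamma$ would again produce a nonzero null vector tangent to a spacelike level set of $t$, a contradiction.

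First I would isolate the spacelike‑slice input. On $\mathcal{O}:=\{0<r<r_{-}\}$ the hypersurfaces $\{t=\textrm{const}\}\cap\mathcal{O}$ are spacelike: this is exactly the claim established at the beginning of the proof of Proposition~\ref{prop spacelike foliation}. On $\mathcal{O}:=\{r>r_{+}\}$ (block~I) the hypersurfaces $\{t=\textrm{const}\}$ are spacelike as well, which is the fact used to time‑orient block~I in \S\ref{definiton of Kerr}. In either case $t$ is smooth on $\mathcal{O}$ with $dt\neq 0$, and $T_p\big(\{t=t(p)\}\cap\mathcal{O}\big)=\ker dt_p$ is spacelike for every $p\in\mathcal{O}$. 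Now if $\gamma$ is any null geodesic with $r\circ\gamma$ contained in a compact interval $[r_1,r_2]\subset\mathcal{O}$, then $\gamma'(s)$ is a nonzero null vector, hence cannot lie in a spacelike hyperplane, so $(t\circ\gamma)'(s)=dt\big(\gamma'(s)\big)\neq 0$ for all $s$; since $[0,+\infty)$ is connected, $t\circ\gamma$ is strictly monotone, say increasing.

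Next I would argue by contradiction: suppose such a $\gamma\colon[0,+\infty)\to K^*$ is bounded, with image in a compact set $\mathcal{C}\subset K^*$. Then $t\circ\gamma$ is bounded (continuity of $t$ on $\mathcal{C}$) and monotone, hence converges, so $(t\circ\gamma)'\in L^1([0,+\infty))$; having constant sign, there is a sequence $s_n\to+\infty$ with $(t\circ\gamma)'(s_n)\to 0$. Fixing an auxiliary Riemannian metric $\mathbf{h}$ on $K^*$ and normalizing constants $c_n:=|\gamma'(s_n)|_{\mathbf{h}}^{-1}$ as in the proof of Proposition~\ref{prop unbounded geodesics approaching horizons}, I pass to a subsequence with $\gamma(s_n)\to x_0\in\mathcal{C}$ and $c_n\gamma'(s_n)\to v\in T_{x_0}K^*$; then $r(x_0)\in[r_1,r_2]\subset\mathcal{O}$ and $v$ is null with $|v|_{\mathbf{h}}=1$, in particular $v\neq 0$. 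It remains to check $dt_{x_0}(v)=0$, which finishes the proof since $v$ would then be a nonzero null vector in the spacelike subspace $\ker dt_{x_0}$. Indeed $dt_{x_0}(v)=\lim_n c_n\,(t\circ\gamma)'(s_n)$; if $(c_n)$ is bounded this limit is $0$ because $(t\circ\gamma)'(s_n)\to 0$. The only alternative, $c_n\to+\infty$ along a subsequence, is ruled out by the hypothesis: passing to the limit in $c_nE=-\mathbf{g}\big(c_n\gamma'(s_n),\tilde{\partial_t}\big)$, $c_nL=\mathbf{g}\big(c_n\gamma'(s_n),\tilde{\partial_\phi}\big)$ and $c_n^2K=2\rho^2\,\mathbf{g}\big(l,c_n\gamma'(s_n)\big)\mathbf{g}\big(n,c_n\gamma'(s_n)\big)$ (using $q=0$), whose right‑hand sides converge to finite values at $x_0\in\mathcal{O}$, would force $E=L=K=0$.

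I expect the one genuinely delicate point to be precisely this control of the normalizing constants $c_n$, that is, ensuring the limiting direction $v$ is simultaneously nonzero and annihilated by $dt$; this is where the assumption that one of $E,L,K$ is nonvanishing enters essentially, since otherwise a degenerate limit $v=0$ cannot be excluded. The rest is a routine combination of the monotonicity coming from the spacelike foliation with a compactness extraction, and the argument treats the two regions $\{0<r<r_{-}\}$ and $\{r>r_{+}\}$ uniformly.
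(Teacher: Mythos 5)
Your proposal is correct and follows essentially the same route as the paper's proof: boundedness (via monotonicity) of $t\circ\gamma$ yields a sequence with $dt\big(\gamma'(s_n)\big)\to 0$, a compactness extraction with an auxiliary Riemannian metric produces a nonzero null limit vector, the nonvanishing constant of motion among $E,L,K$ rules out degeneration of the normalized tangent vectors, and the limit vector lying in the spacelike slice $\{t=\mathrm{const}\}$ gives the contradiction. Your explicit control of the normalizing constants $c_n$ and the verification that the slices are spacelike in $\{r>r_{+}\}$ are exactly the points the paper handles (slightly more tersely) in the same way.
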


\begin{proof}
By contradiction,  let $\gamma: [0,+\infty)\rightarrow \{0<r<r_{-}\}\subset K^*$ (or $\gamma: [0,+\infty)\rightarrow \{r>r_{+}\}\subset K^*$) be a bounded complete null geodesic with at least one non-vanishing constant of motion.  The boundedness of the $t$-coordinate implies that there exists a sequence $s_n\rightarrow +\infty$ such that $dt\big(\gamma'(s_n)\big)\rightarrow 0$.  Now choose a subsequence $s_m\rightarrow +\infty$ such that $\gamma(s_m)\rightarrow p_{\infty}\in K^*$ as $m\rightarrow +\infty$ which always exists because the $Im\,\gamma$ is contained in a compact set.  Let $\mathbf{h}$ be some (arbitrary) auxiliary Riemannian metric on $K^*$.  Possibly passing to a new subsequence,  we may assume that 

\begin{align*}
\frac{\gamma'(s_m)}{|\gamma'(s_m)|_{\mathbf{h}}}\rightarrow v\in T_{p_\infty} K^*,
\end{align*}
where $v\neq 0$ and null because it is limit of $\mathbf{h}$-norm $1$ null vectors.  Moreover $\gamma'(s_m)$ cannot converge to zero because either $E=-\mathbf{g}(\partial_t,\gamma'(s))\neq 0$ or $L=\mathbf{g}(\partial_\phi,\gamma'(s))\neq 0$ or $K(E,L,Q)=2\rho^2(r,\theta)\mathbf{g}(l,\gamma'(s))\mathbf{g}(n,\gamma'(s))\neq 0$ (see \S \ref{study of geodesic equations}) is constant on $\gamma$. Therefore there exists an $\epsilon >0$ and a further subsequence $s_{k}\rightarrow +\infty$ such that $|\gamma'(s_k)|_{\mathbf{h}}\geq \epsilon$ for all $k$.  For this subsequence,  we have

\begin{align*}
|dt(v)|=\lim_{k\rightarrow +\infty} \frac{|dt\big( \gamma'(s_k)\big)|}{|\gamma'(s_k)\big)|_{\mathbf{h}}}\leq \frac{1}{\epsilon}  \lim_{k\rightarrow +\infty} |dt\big( \gamma'(s_k)\big)|=0.
\end{align*}
Then the null vector $v\in T_{p_\infty}\{t=\textrm{const}\}$,  which is a contradiction because $\{t=\textrm{const}\}$ is spacelike in both  regions  $\{0<r<r_{-}\}$ or $\{r>r_{+}\}$.\\
\end{proof}

\begin{prop}[\cite{KBH_book},  Lemma $4.7.3$]\label{prop roots R}
The polynomial $R(r)$ of every null geodesic (different from restphotons) in $K^*$ is positive on $(r_{-},r_{+})$ and has at most simple zeroes at $r_{\pm}$.
\end{prop}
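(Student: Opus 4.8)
The plan is to read off everything from the factored form of the radial polynomial in Proposition \ref{differential equations of geodessics}: for a null geodesic ($q=0$) it gives $R(r)=-\Delta(r)\,K(E,L,Q)+\mathbb{P}^2(r)$ with $\mathbb{P}(r)=(r^2+a^2)E-La$, and we may use the two non-negativity facts $K\geq 0$ (Proposition \ref{eq principal nulls}) and $\mathbb{P}^2(r)\geq 0$.

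For positivity on $(r_-,r_+)$ I would argue as follows. On this interval $\Delta(r)=(r-r_+)(r-r_-)<0$, so $-\Delta(r)K\geq 0$ and hence $R(r)\geq 0$; moreover $R(r_*)=0$ with $r_*\in(r_-,r_+)$ would force both $K=0$ and $\mathbb{P}(r_*)=0$, because $\Delta(r_*)<0$ strictly. Now $K=0$ means $\gamma$ is principal (Proposition \ref{eq principal nulls}), and since $\gamma$ is not a restphoton, Proposition \ref{E=L=K=0 prop} gives $E\neq 0$; principality yields $\mathbb{D}(\theta)=\mathbf{g}(\gamma',W)=0$ (as $W\in\Pi^\perp$), i.e. $L=Ea\sin^2\theta$ with $\theta$ constant equal to some $\theta_0$, so that $\mathbb{P}(r)=E\bigl(r^2+a^2\cos^2\theta_0\bigr)=E\,\rho^2(r,\theta_0)$ as polynomials in $r$; this is nonzero for every $r>0$ (indeed $r^2+a^2\cos^2\theta_0\geq r^2>0$), contradicting $\mathbb{P}(r_*)=0$ since $r_*>r_->0$. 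Hence $R>0$ on $(r_-,r_+)$.

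For the zeros at $r_\pm$ I would use $\Delta(r_\pm)=0$, so that $R(r_\pm)=\mathbb{P}^2(r_\pm)$ and $R'(r_\pm)=-\Delta'(r_\pm)K+2\mathbb{P}(r_\pm)\mathbb{P}'(r_\pm)$. If $R(r_\pm)=0$ then $\mathbb{P}(r_\pm)=0$, so $R'(r_\pm)=-\Delta'(r_\pm)K$; since $\Delta'(r_\pm)=2(r_\pm-M)=\pm 2\sqrt{M^2-a^2}\neq 0$ (because $|a|<M$), vanishing of $R'(r_\pm)$ would again force $K=0$, whence as above $\mathbb{P}(r)=E\,\rho^2(r,\theta_0)\neq 0$ for all $r>0$, contradicting $\mathbb{P}(r_\pm)=0$ (recall $r_\pm>0$). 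So any zero of $R$ at $r_\pm$ is simple.

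I expect the only genuinely delicate step to be the principal case $K=0$: one has to invoke that, restphotons being excluded, such null geodesics carry $E\neq 0$ and travel at constant latitude $\theta_0$, which rigidifies $\mathbb{P}$ into $E\rho^2$ and prevents it from vanishing on $\{r>0\}$. Everything else is sign bookkeeping for the quartic $R$ and the elementary fact $\Delta'(r_\pm)\neq 0$ in the slow-Kerr range $|a|<M$.
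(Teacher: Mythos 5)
Your proof is correct, and it shares the paper's starting point---the decomposition $R(r)=-K(E,L,Q)\Delta(r)+\mathbb{P}^2(r)$ together with $K\geq 0$ and $\Delta<0$ on $(r_-,r_+)$---but it diverges in how the degenerate case $K=0$ is handled, and that is the only place where the two arguments genuinely differ. The paper disposes of $K=0$ by splitting on the sign of $Q=-(L-aE)^2$: for $Q=0$ it computes $R(r)=E^2r^4$, and for $Q<0$ it invokes the convexity result of Prop.~\ref{geod Q<0} (no roots or two negative roots), so that in either situation $R>0$ on $\{r>0\}$; the simple-zero claim at $r_\pm$ is then only at stake for $K>0$, where the paper identifies the value of $L$ making $\mathbb{P}(r_\pm)=0$ and evaluates $\partial R/\partial r$ there. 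You instead use the geometric meaning of $K=0$ (principality, Prop.~\ref{eq principal nulls}): $\mathbf{g}(\gamma',W)=0$ forces $L=Ea\sin^2\theta_0$ at constant latitude, whence $\mathbb{P}(r)=E\,\rho^2(r,\theta_0)$ has no zeros on $\{r>0\}$; this rules out both interior zeros and, after the computation $R'(r_\pm)=-K\Delta'(r_\pm)$ with $\Delta'(r_\pm)\neq 0$, a double zero at $r_\pm$. Your route buys independence from Prop.~\ref{geod Q<0} and a uniform treatment of the interior and boundary cases; the paper's route buys explicit formulas (the critical value $L_\pm=2Mr_\pm E/a$ and the value of $R'$ there).

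One small imprecision: from $K=0$ and ``not a restphoton'' alone, Prop.~\ref{E=L=K=0 prop} does not yet give $E\neq 0$---you still have to exclude the combination $K=0$, $E=0$, $L\neq 0$. This needs either Prop.~\ref{Q<0 condition} (since then $Q=-L^2<0$ would force $E\neq 0$, a contradiction) or, more in the spirit of your own argument, the relation $\mathbb{D}(\theta)=0$ that principality gives you anyway: if $E=0$ then $L=Ea\sin^2\theta=0$, so $K=L=E=0$ and $\gamma$ is a restphoton by Prop.~\ref{E=L=K=0 prop}, contradiction. With that one sentence added, your argument is complete.
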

\begin{proof}
From Prop.  \ref{differential equations of geodessics},  the polynomial $R(r)$ of null geodesics is
\[
R(r)=-K(E,L,Q)\Delta(r)+\mathbb{P}^2(r).
\]
Observe that $K(E,L,Q)\geq 0$ by Prop. \ref{eq principal nulls} and $\Delta(r)<0$ if $r\in(r_{-},r_{+})$.  Hence $R(r)$ is sum of non-negative terms if $r\in(r_{-},r_{+})$.  Now we distinguish two cases.\\

Case 1: $K(E,L,Q)=0$.  Then $Q=-(L-aE)^2\leq 0$.  Hence there are two subcases.  If $Q=0$,  then $\mathfrak{X}(E,L,Q)=0$.  From Prop.  \ref{differential equations of geodessics} we get $R(r)=E^2r^4>0$ for $r\neq 0$ if the geodesic is not a restphoton.  If $Q<0$,  $R(r)>0$ for all $r>0$ by Prop.  \ref{geod Q<0}.

Case 2: $K(E,L,Q)>0$.  Since $\Delta(r)<0$ for $r\in(r_{-},r_{+})$,  $R(r)>0$ in $r\in(r_{-},r_{+})$.\\
This proves the first claim. \\

Finally,  $R(r_{\pm})=\mathbb{P}^2(r_{\pm})=0$ if and only if $L=L_{\pm}=\frac{2Mr_{\pm}E}{a}$ is a root of $\mathbb{P}(r_{\pm})=(r_{\pm}^2+a^2)E-aL=0$.  Then 

\[
\frac{\partial R}{\partial r}\bigg|_{(r_{\pm},L_{\pm})}=-2(r_{\pm}-M)(r^2_{\pm}+K(E,L,Q))\neq 0.
\]

\end{proof}

\begin{prop}\label{prop bounded have r2<r+}
Let $\gamma: \mathbb{R}\rightarrow K^*$ be a complete bounded null geodesic and $r_2:=\sup_{s\in\mathbb{R}}r(\gamma(s))$. Then $r_2<r_{-}$. 
\end{prop}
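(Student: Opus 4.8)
The statement to prove is that a complete bounded null geodesic $\gamma\colon\mathbb{R}\to K^*$ has $r_2:=\sup_s r(\gamma(s))<r_-$. We already know from Lemma~\ref{lemma about closed curves confined}-type reasoning and the causal structure that the interesting region is $\{r<r_-\}$; but here $\gamma$ is merely bounded, not closed, so I cannot invoke the causality of $\{r\ge r_-\}$ directly. Instead the plan is to combine the structural results on $R(r)$ from Corollary~\ref{bounded could exist only in compact}, Proposition~\ref{prop roots R}, Proposition~\ref{prop.  r1 and r2}, and the "foliation" obstruction Proposition~\ref{no bounded geodesics with foliation}, together with the horizon-crossing facts (Proposition~\ref{oss event horizon}'s proof idea and Proposition~\ref{prop unbounded geodesics approaching horizons}), to squeeze $r_2$ below $r_-$.

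\textbf{Main steps.} First, since $\gamma$ is complete and bounded, Corollary~\ref{bounded could exist only in compact} gives that $r\circ\gamma$ takes values in a compact interval $[r_1,r_2]$ with $R(r_1)=R(r_2)=0$, and $R\ge 0$ on $[r_1,r_2]$. Next I would rule out that $\gamma$ is a restphoton (restphotons are unbounded by Proposition~\ref{restphotons are unbounded}), so Proposition~\ref{prop roots R} applies: $R>0$ on the open interval $(r_-,r_+)$ and $R$ has at most simple zeroes at $r_\pm$. Because $R(r_2)=0$ and $R>0$ on $(r_-,r_+)$, the value $r_2$ cannot lie in the open interval $(r_-,r_+)$; and $r_2=r_+$ is excluded because at a simple zero of $R$ the coordinate $r\circ\gamma$ would have to pass through $r_+$ (Proposition~\ref{initial conditions and zeroes}), i.e.\ $\gamma$ would cross the horizon $\mathscr H_+$, and then by Remark~\ref{oss one side of horizon} and Proposition~\ref{prop unbounded geodesics approaching horizons} $\gamma$ would be unbounded — contradiction. (If instead $r_2=r_+$ were a higher-order zero, $r\circ\gamma\equiv r_+$, so $\gamma\subset\mathscr H_+$, again a restphoton, excluded.) This already forces $r_2\le r_-$.

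\textbf{Excluding $r_2=r_-$ and $r_-<r_2$ more carefully.} The remaining possibilities are $r_2>r_+$, $r_2=r_-$, or $r_2<r_-$; the last is the claim. If $r_2>r_+$, then since $R(r_2)=0$ and $R$ has at most a simple zero at $r_+$ with $R>0$ on $(r_-,r_+)$, the geodesic's radial coordinate is compactly contained in $\{r>r_+\}$ (it cannot dip below $r_+$ without crossing the simple zero there, which would mean crossing $\mathscr H_+$ and being unbounded). Then $\gamma$ is compactly contained in block~I; but a complete null geodesic confined to $\{r>r_+\}$ cannot have all of $E,L,K$ vanishing (that would make it a restphoton, which lives on $\mathscr H$, not in block~I), so Proposition~\ref{no bounded geodesics with foliation} gives a contradiction with boundedness. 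Finally, if $r_2=r_-$: either $r_-$ is a higher-order zero of $R$, forcing $r\circ\gamma\equiv r_-$, so $\gamma\subset\mathscr H_-$, a restphoton, unbounded — contradiction; or $r_-$ is a simple zero, in which case $r\circ\gamma$ would have a turning point at $r_-$, meaning $\gamma$ is tangent to the totally geodesic null hypersurface $\mathscr H_-$ at that point, hence lies in $\mathscr H_-$ (Proposition~\ref{H is closed totally geod}), again a restphoton, contradiction. Therefore $r_2<r_-$.

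\textbf{Expected main obstacle.} The delicate point is handling the case $r_2=r_+$ (and analogously $r_2=r_-$) with a \emph{simple} zero of $R$: I must be sure that a simple zero of $R$ at the horizon radius genuinely forces the geodesic either to cross the horizon or to be tangent to it — this is exactly where Proposition~\ref{initial conditions and zeroes} (turning point vs.\ constant) and the geometry of $\mathscr H$ as a totally geodesic null hypersurface enter, and where one must carefully rule out the geodesic "bouncing" off $r_+$ from outside without crossing (which is precisely what a simple-zero turning point at $r_+$ means, and which puts the tangent vector into $T\mathscr H$, hence the geodesic into $\mathscr H$). Pinning down that tangency-implies-containment step, and ensuring the restphoton exclusion is invoked consistently, will be the crux; the rest is bookkeeping over the finitely many positions of $r_2$ relative to $r_-,r_+$.
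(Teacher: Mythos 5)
Your setup (Cor.~\ref{bounded could exist only in compact}, exclusion of restphotons, Prop.~\ref{prop roots R}) and your treatment of the cases $r_2\in(r_{-},r_{+})$ and $r_2=r_{\pm}$ essentially reproduce the paper's argument: the decisive mechanism at $r_2=r_{\pm}$ is, as you eventually say, that a turning point on $\{r=r_{\pm}\}$ makes $\gamma$ tangent to the totally geodesic null hypersurface $\mathscr{H}$ (Prop.~\ref{H is closed totally geod}), hence a restphoton, which is unbounded. Two slips there: Prop.~\ref{initial conditions and zeroes} says a simple zero of $R$ is a \emph{turning} point, not a point the $r$-coordinate ``passes through'', so your first justification for excluding $r_2=r_{+}$ (``crosses $\mathscr{H}_{+}$, hence unbounded by Rmk.~\ref{oss one side of horizon} and Prop.~\ref{prop unbounded geodesics approaching horizons}'') is not a valid inference, although you repair the mechanism in your last paragraph; and your dichotomy ``higher-order zero $\Rightarrow r\equiv r_{\pm}$, simple zero $\Rightarrow$ turning point'' tacitly assumes the supremum is \emph{attained} --- if $r\circ\gamma$ only tends to $r_{\pm}$ asymptotically, you must invoke Prop.~\ref{prop unbounded geodesics approaching horizons}, which is the ``contained in a horizon \emph{or approaching it}'' alternative in the paper's proof.

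The genuine gap is the case $r_2>r_{+}$. You assert that the radial range is then compactly contained in $\{r>r_{+}\}$ because the geodesic ``cannot dip below $r_{+}$ without crossing the simple zero there, which would mean crossing $\mathscr{H}_{+}$ and being unbounded''. Neither claim is justified: there need be no zero of $R$ at $r_{+}$ at all --- if $r_1<r_{+}<r_2$, then $R\geq 0$ on $[r_1,r_2]$ together with the at-most-simple-zeroes statement of Prop.~\ref{prop roots R} forces $R(r_{+})>0$, so the geodesic simply crosses $\mathscr{H}_{+}$ transversally; and nothing you cite makes a single transversal crossing incompatible with boundedness (Rmk.~\ref{oss one side of horizon} only bounds the number of crossings by one, and Prop.~\ref{prop unbounded geodesics approaching horizons} concerns geodesics whose $r$-coordinate tends to $r_{\pm}$ at an end of the domain, not crossings at an interior parameter; infalling photons do cross $\mathscr{H}_{+}$). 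Hence the sub-case $r_1\leq r_{-}$ --- which by Prop.~\ref{prop roots R} is exactly what happens whenever $r_1\leq r_{+}$, since $R>0$ on $(r_{-},r_{+})$ --- is left unhandled; your appeal to Prop.~\ref{no bounded geodesics with foliation} applied to the whole geodesic only covers $r_1>r_{+}$. The paper closes this sub-case by restricting to the ray before the unique transversal crossing of $\mathscr{H}_{+}$ at some $s_0$: the portion $\gamma\big((-\infty,s_0-\epsilon)\big)$ has radial range compactly contained in $\{r>r_{+}\}$ (it cannot approach $\mathscr{H}_{+}$ in the past either, again by Prop.~\ref{prop unbounded geodesics approaching horizons}), and Prop.~\ref{no bounded geodesics with foliation}, which is formulated for geodesic rays precisely for this purpose, applied to that ray contradicts boundedness.
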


\begin{proof}
Case (1): $r_2>r_{+}$.   If $r_1>r_{+}$,  then $r(\gamma(s))$ is compactly contained in $\{r>r_{+}\}$ for every $s$,  hence $\gamma$ cannot be bounded by Prop.  \ref{no bounded geodesics with foliation}.  By Prop.  \ref{prop roots R},  we hence must have $r_1\leq r_{-}$.
Since $\gamma$ can only cross $\mathscr{H}_{+}$ once,  say at $s_0$,  then  $r_2$ cannot be a turning point,  otherwise $\gamma$ would reverse its $r$-motion by Prop.  \ref{initial conditions and zeroes} at $r_2$ and cross $\mathscr{H}_{+}$ again.  Therefore $\gamma((-\infty,s_0-\epsilon))$,  for some $\epsilon>0$,  is compactly contained in the region $\{r>r_{+}\}$ and $\gamma$ cannot be bounded by Prop.  \ref{no bounded geodesics with foliation}. \\

Case (2): $r_2=r_{+}$ or $r_2=r_{-}$.  Then $\gamma$ is either contained in a horizon or approaching it ($r_{\pm}$ cannot be a turning point because the horizons are closed totally geodesic submanifolds.) Hence,  $\gamma$ is unbounded by Prop.  \ref{restphotons are unbounded} and Prop.  \ref{prop unbounded geodesics approaching horizons}.\\

Case (3): $r_2\in(r_{-},r_{+})=:I$.  This case is impossible since $R(r)>0$ on $I$ by Prop.  \ref{prop roots R}.  
\end{proof}

\section{Proof of Theorem \ref{second theorem}} \label{proof second theorem}
We assume that $\gamma$ is a bounded null geodesic and argue by contradiction.  By Cor.  \ref{incomplete are unbounded},  we may assume that $\gamma$ is complete and define $r_1$ and $r_2$ as in Prop.  \ref{prop.  r1 and r2}.  By Cor.  \ref{bounded could exist only in compact},  the polynomial $R(r)$ of $\gamma$ is non-negative on $[r_1,r_2]$ with $R(r_1)=R(r_2)=0$.\\
We split the argument in the cases as in the proof of Thm.  \ref{main theorem} in \S \ref{section: main theorem}.

\begin{figure}[h]
\centering
\begin{tikzpicture}[grow=right]
\tikzset{level distance=100pt,sibling distance=90pt}
 \tikzset{frontier/.style={distance from root=200pt}}

\Tree 
               [.$E=0$ [.$K(0,L,Q)>0$ ] 
                       [.$K(0,L,Q)=0$ ] 
                       ]   
                         
\end{tikzpicture}
\hspace{1cm}
\begin{tikzpicture}[grow=right]
\tikzset{level distance=80pt,sibling distance=40pt}
 \tikzset{frontier/.style={distance from root=300pt}}

\Tree 
               [.$E\neq 0$  [.$Q<0$ ] 
               		              [.$Q>0$ ] 
                                    [.$Q=0$ ] 
                     ]

\end{tikzpicture}
\vspace*{5mm}
\caption{All the geodesic types which have to be studied.}
\label{figure steps of proof}
\vspace*{-5mm}
\end{figure}
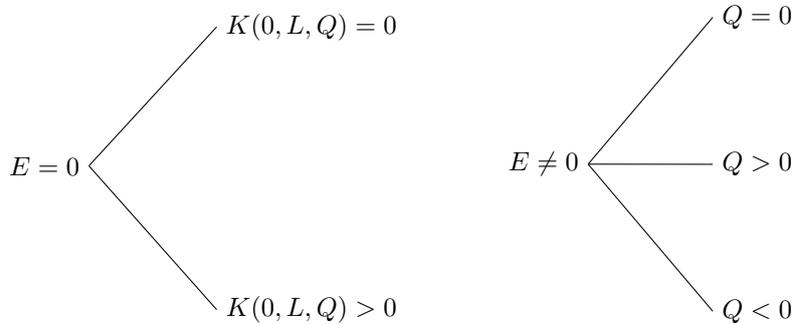

\vspace{1cm}

\subsection{Null geodesics in the horizons and axis}

A null geodesic in a horizon is a restphoton and is unbounded by Prop.  \ref{restphotons are unbounded}.  Note that by Prop.  \ref{E=L=K=0 prop} those are the only geodesics for which all the constants of motion vanish.  If $\gamma\in A\setminus\mathscr{H}$,  then $K=L=0,  E\neq 0$ by Prop.  \ref{E=L=K=0 prop},  hence $R(r)=E^2(r^2+a^2)^2\neq 0$ for every $r$.  Then $\gamma$ is unbounded by Cor.  \ref{bounded could exist only in compact}.

\subsection{Case $E=0$}
\hfill \break
\subsubsection{Subcase $K(0,L,Q)=0$,  cf.  \ref{subcase E=0,K=0}}
\hfill \break

The geodesic is not a restphoton,  so $L\neq 0$.  The argument in \ref{subcase E=0,K=0} then shows,  using Prop. \ref{prop bounded have r2<r+} instead of Lemma \ref{lemma about closed curves confined},  that $t'(s)=\textrm{const}\neq 0$,  hence $\gamma$ is unbounded


\subsubsection{Subcase $K(0,L,Q)>0$,  cf.  \ref{subcase E=0,K>0}}
\hfill \break

In this case,  $R(r)>0$ on $(r_{-},r_{+})$,  see Fig.  \ref{R(r) E=0,  L^2+Q dif 0,  Q>0},  \ref{R(r) E=0,  L^2+Q dif 0,  Q=0},  \ref{R(r) E=0,  L=0},  so $r_{2}\geq r_{+}$ and $\gamma$ cannot be bounded by Prop.  \ref{prop bounded have r2<r+}.



\subsection{Case $E\neq 0$}
\hfill \break
\subsubsection{Subcase $Q=0$,  cf.  \ref{case Q=0}}
\hfill \break

Such geodesics lie in $Eq=\{\theta=\pi/2\}$ by Prop. \ref{prop r-bounded eq geodesics}.  Bounded $r$-behaviour in $r<0$ is excluded by (\cite{KBH_book},  Lemma $4.14.2$).  $r_1=0$ is impossible because $\gamma$ will then crash into the ring singularity.  Hence,  $[r_1,r_2]\subset (0,r_{-})$ by Prop.  \ref{prop bounded have r2<r+},  which contradicts Prop.  \ref{no bounded geodesics with foliation}.


\subsubsection{Subcase $Q>0$,  cf.  \ref{case Q>0}}
\hfill \break

It is shown in \ref{case Q>0} that bounded $r$-behaviour is only possible in $r>0$ and that $R(0)<0$.  This implies that $[r_1,r_2]\subset (0,r_{-})$ by Prop.  \ref{prop bounded have r2<r+},  which contradicts Prop.  \ref{no bounded geodesics with foliation}.




\subsubsection{Subcase $Q<0$,  cf.  \ref{case Q<0}}
\hfill \break

Such null geodesics have constant $r$-behaviour at negative radius.  Two types of $\theta$-behaviour are possible.  If the $\theta$-coordinate is constant,  then the proof of Prop.  \ref{u-=u+ Q<0 prop} shows that such geodesics are unbounded because $t=t(s)$ must be a non-constant affine function.  If the $\theta$-coordinate is not constant,  the $\theta$-motion is periodic as shown in Prop.  \ref{prop theta behaviours Q<0} and the $t$-coordinate is a quasi-periodic function with a non-zero increment,  see \ref{case Q<0}.  Since the variation of the $t$-coordinate after $n$ $\theta$-oscillations is $n\Delta t$ (see Remark \ref{remark n oscillations}) and the proof of Thm.  \ref{main theorem} shows that $\Delta t>0$ for every negative fixed radius,  these geodesics cannot bounded.



\appendix
\addcontentsline{toc}{chapter}{Appendices}
\addtocontents{toc}{\setcounter{tocdepth}{-1}}

\section{Elliptic integrals and hypergeometric functions}\label{elliptic appendix}

\begin{defn}

Let $\phi\in[-\pi/2,\pi/2]$.  The elliptic integral of the first kind is

\begin{align*}
\mathcal{F}(\phi|k):=\int_0^{\sin\phi}\frac{ds}{\sqrt{(1-s^2)(1-ks^2)}}.
\end{align*}

The complete $(\phi=\pi/2)$ elliptic integral of the first kind is

\begin{align*}
\mathcal{K}(k):=\mathcal{F}(\pi/2|k)=\int_0^1\frac{ds}{\sqrt{(1-s^2)(1-ks^2)}}.
\end{align*}

The elliptic integral of the second kind is

\begin{align*}
\mathcal{E}(\phi|k):=\int_0^{\sin\phi}\sqrt{\frac{1-ks^2}{1-s^2}}ds.
\end{align*}

The complete $(\phi=\pi/2)$ elliptic integral of the second kind is

\begin{align*}
\mathcal{E}(k):=\mathcal{E}(\pi/2|k)=\int_0^1\sqrt{\frac{1-ks^2}{1-s^2}}ds.
\end{align*}

We define also
\begin{align*}
\mathcal{D}(k):=\int_0^1\frac{s^2ds}{\sqrt{(1-s^2)(1-ks^2)}}=\frac{\mathcal{K}(k)-\mathcal{E}(k)}{k}=-2\frac{\partial \mathcal{E}(k)}{\partial k}.
\end{align*}

\end{defn}

\begin{oss}\label{remark integral estimate}
Let $0<z,s<1, \; x<0$.  
\begin{align*}
\sqrt{\frac{1-zs^2}{1-s^2}}>\frac{1}{\sqrt{1-s^2}\sqrt{1-xs^2}}\hspace*{0.5cm}\Longleftrightarrow\hspace*{0.5cm}(1-zs^2)(1-xs^2)>1 \hspace*{0.5cm}\Longrightarrow\hspace*{0.5cm}\mathcal{E}(x)>\mathcal{K}(z).
\end{align*}
If $z=x/(x-1)$,  it satisfies $0<z<1$ and we have 
\begin{align*}
(1-zs^2)(1-xs^2)>1 \hspace*{0.5cm}\Longleftrightarrow\hspace*{0.5cm} x+z<xzs^2\hspace*{0.5cm}\Longleftrightarrow\hspace*{0.5cm} 1>s^2,
\end{align*}
hence $\mathcal{E}(x)>\mathcal{K}(z)$.
\end{oss}

\begin{defn}[\cite{hypergeo}]
The hypergeometric function $F(\alpha,\beta;\gamma;x)$ is defined by the series 

\begin{align*}
\sum_{n=0}^{\infty} \frac{(\alpha)_n (\beta)_n}{(\gamma)_n n!}x^n,
\end{align*}
where $(\alpha)_n:=\alpha(\alpha+1)\cdot ... \, \cdot (\alpha+n-1)$ for $n>0$,  $(\alpha)_0\equiv 1$ (analogous for the others),  for $|x|<1$,  and by continuation elsewhere.
\end{defn}

\begin{prop}[\textit{Euler's integral representation},  see \cite{hypergeo}]\label{Euler integral rep}
If $\textrm{Re}\;\gamma>\textrm{Re}\;\beta>0$,  then

\begin{align*}
F(\alpha,\beta;\gamma;x)=\frac{\Gamma(\gamma)}{\Gamma(\beta)\Gamma(\gamma-\beta)}\int_0^1 t^{\beta-1} (1-t)^{\gamma-\beta-1}(1-xt)^{-\alpha}dt
\end{align*}
in the complex $x-$plane cut along the real axis from $1$ to $+\infty$,  where $\Gamma(x):=\int_0^{\infty} t^{x-1} e^{-t} dt$ is the Euler's gamma function.
\end{prop}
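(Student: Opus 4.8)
The plan is to establish the identity first for $|x|<1$ by a term-by-term computation and then to propagate it to the whole cut plane by analytic continuation. First I would expand the factor $(1-xt)^{-\alpha}$ using the binomial series
\[
(1-xt)^{-\alpha}=\sum_{n=0}^{\infty}\frac{(\alpha)_n}{n!}\,(xt)^n,
\]
which converges absolutely and uniformly in $t\in[0,1]$ as soon as $|x|<1$, since then $|xt|\le|x|<1$. Inserting this into the integral on the right-hand side and interchanging summation and integration --- legitimate because the convergence is uniform on the compact interval $[0,1]$ and the weight $t^{\textrm{Re}\,\beta-1}(1-t)^{\textrm{Re}(\gamma-\beta)-1}$ is integrable precisely by the hypotheses $\textrm{Re}\,\beta>0$ and $\textrm{Re}(\gamma-\beta)>0$ --- reduces matters to the Beta integrals
\[
\int_0^1 t^{\beta+n-1}(1-t)^{\gamma-\beta-1}\,dt=\frac{\Gamma(\beta+n)\,\Gamma(\gamma-\beta)}{\Gamma(\gamma+n)}.
\]

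Next I would assemble the pieces. The right-hand side becomes
\[
\frac{\Gamma(\gamma)}{\Gamma(\beta)\Gamma(\gamma-\beta)}\sum_{n=0}^{\infty}\frac{(\alpha)_n}{n!}\,x^n\cdot\frac{\Gamma(\beta+n)\,\Gamma(\gamma-\beta)}{\Gamma(\gamma+n)},
\]
and, using the functional equation of the Gamma function in the form $\Gamma(\beta+n)/\Gamma(\beta)=(\beta)_n$ and $\Gamma(\gamma)/\Gamma(\gamma+n)=1/(\gamma)_n$, this collapses to $\sum_{n=0}^{\infty}\frac{(\alpha)_n(\beta)_n}{(\gamma)_n\,n!}\,x^n$, which is exactly the defining series of $F(\alpha,\beta;\gamma;x)$. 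This proves the formula for $|x|<1$.

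Finally, to obtain the identity throughout the $x$-plane cut along $[1,+\infty)$, I would argue by analytic continuation. On this domain the integral $\int_0^1 t^{\beta-1}(1-t)^{\gamma-\beta-1}(1-xt)^{-\alpha}\,dt$ depends holomorphically on $x$: for $t\in[0,1]$ and $x$ ranging in a compact subset avoiding $[1,+\infty)$, the factor $(1-xt)^{-\alpha}$ is uniformly bounded and holomorphic in $x$, so Morera's theorem (equivalently, differentiation under the integral sign) applies. Since $F(\alpha,\beta;\gamma;x)$ also continues holomorphically from the unit disc to the same connected domain, the identity theorem for holomorphic functions forces the two sides to agree everywhere on $\mathbb{C}\setminus[1,+\infty)$.

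The main obstacle is not any single computation but the careful bookkeeping of hypotheses: one must verify that $\textrm{Re}\,\beta>0$ and $\textrm{Re}(\gamma-\beta)>0$ are exactly what make the Beta integrals converge and the term-by-term integration valid, and that the integral is genuinely holomorphic on the entire cut plane with no spurious singularity at $x=1$, so that the continuation step applies as stated.
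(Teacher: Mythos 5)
Your proof is correct: the paper itself gives no argument for this proposition, citing the standard reference instead, and your route (binomial expansion of $(1-xt)^{-\alpha}$ for $|x|<1$, term-by-term integration via the Beta integral under the hypotheses $\mathrm{Re}\,\beta>0$, $\mathrm{Re}(\gamma-\beta)>0$, then holomorphy of the integral on the cut plane and the identity theorem) is exactly the classical proof found there. The only cosmetic remark is that, with the paper's definition of $F$ as ``the series for $|x|<1$ and its continuation elsewhere,'' the integral itself furnishes that continuation, so the final step can be phrased as: the integral is holomorphic on $\mathbb{C}\setminus[1,+\infty)$ and agrees with the series on the disc, hence equals the continuation of $F$ there.
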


\begin{prop}[\cite{hypergeo}]\label{elliptic as hypergeom}
We can write the complete elliptic integral of the first kind as

\begin{align*}
\mathcal{K}(x)=\frac{\pi}{2} F\bigg(\frac{1}{2},\frac{1}{2};1;x\bigg).
\end{align*}
\end{prop}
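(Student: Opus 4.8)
The plan is to obtain the identity directly from Euler's integral representation, Proposition \ref{Euler integral rep}, together with a single change of variables. First I would verify that the hypothesis $\mathrm{Re}\,\gamma > \mathrm{Re}\,\beta > 0$ holds for the relevant parameters $\alpha = \beta = \tfrac12$ and $\gamma = 1$, since $1 > \tfrac12 > 0$. Euler's formula then gives, for $x$ in the plane cut along $[1,+\infty)$,
\[
F\bigg(\tfrac12,\tfrac12;1;x\bigg) = \frac{\Gamma(1)}{\Gamma(\tfrac12)^2} \int_0^1 t^{-1/2}(1-t)^{-1/2}(1-xt)^{-1/2}\,dt .
\]

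Next I would insert the classical values $\Gamma(1) = 1$ and $\Gamma(\tfrac12) = \sqrt{\pi}$, so that the prefactor equals $1/\pi$, and then substitute $t = s^2$, $dt = 2s\,ds$. This maps $[0,1]$ onto itself and transforms the integrand into $\tfrac{2s}{s}\,\big((1-s^2)(1-xs^2)\big)^{-1/2}$; the factors of $s$ cancel, leaving $2\int_0^1 \big((1-s^2)(1-xs^2)\big)^{-1/2}\,ds = 2\,\mathcal{K}(x)$ by the very definition of the complete elliptic integral of the first kind. Hence $F(\tfrac12,\tfrac12;1;x) = \tfrac{2}{\pi}\,\mathcal{K}(x)$, which is exactly the asserted formula after rearrangement.

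I do not expect any genuine obstacle here. The only points requiring a word of care are the applicability of Euler's representation (the parameter inequality, checked above) and, if one wants the identity on the whole cut plane rather than merely for $|x|<1$, an appeal to analytic continuation: both sides are holomorphic in $x$ away from $[1,+\infty)$ and agree on the unit disc, hence everywhere. For the use made of this proposition in the paper only $x < 0$ is needed, a range in which every integral above is absolutely convergent and the manipulations are immediate.
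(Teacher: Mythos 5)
Your proof is correct and follows the paper's own argument exactly: Euler's integral representation with $\alpha=\beta=\tfrac12$, $\gamma=1$, the substitution $t=s^2$, and the values $\Gamma(\tfrac12)=\sqrt{\pi}$, $\Gamma(1)=1$. The paper merely states these steps in one line, so your write-up is simply a more detailed version of the same computation.
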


\begin{proof}
Use the integral representation of the hypergometric function given in Prop. \ref{Euler integral rep},  the integral substitution $t=s^2$,  with $\Gamma(\frac{1}{2})=\sqrt{\pi}$,  $\Gamma(1)=1$.
\end{proof}

\begin{prop}[\textit{"Pfaff's formula",  see Theorem $2.2.5$ of \cite{hypergeo}}]\label{hypergeometric trick}

\begin{align*}
F(\alpha,\beta;\gamma;x)=(1-x)^{-\alpha} F\bigg(\alpha, \gamma-\beta;\gamma;\frac{x}{x-1}\bigg).
\end{align*}
\end{prop}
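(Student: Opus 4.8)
The plan is to derive Pfaff's identity directly from Euler's integral representation (Proposition \ref{Euler integral rep}) via the elementary substitution $t\mapsto 1-t$. I would first restrict to parameters with $\mathrm{Re}\,\gamma>\mathrm{Re}\,\beta>0$, the range in which that representation is valid; note that this is the same condition as $\mathrm{Re}\,\gamma>\mathrm{Re}(\gamma-\beta)>0$, so it simultaneously licenses the integral representation of the function $F(\alpha,\gamma-\beta;\gamma;\,\cdot\,)$ appearing on the right-hand side, and no additional hypothesis is needed. The general statement will then follow by analytic continuation in $x$ and in $\alpha,\beta,\gamma$; in any case the only values used in the paper are real $x<0$ (namely $x=1-u_{+}/u_{-}$), where everything is manifestly legitimate.

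Concretely, starting from
\[
F(\alpha,\beta;\gamma;x)=\frac{\Gamma(\gamma)}{\Gamma(\beta)\Gamma(\gamma-\beta)}\int_0^1 t^{\beta-1}(1-t)^{\gamma-\beta-1}(1-xt)^{-\alpha}\,dt,
\]
I substitute $t=1-s$. The differential is unchanged up to the reversal of the limits of integration, the monomials $t^{\beta-1}$ and $(1-t)^{\gamma-\beta-1}$ exchange roles, and the key elementary identity
\[
1-x(1-s)=(1-x)\Bigl(1-\tfrac{x}{x-1}\,s\Bigr)
\]
lets me pull the factor $(1-x)^{-\alpha}$ out of the last bracket. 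What remains is
\[
(1-x)^{-\alpha}\,\frac{\Gamma(\gamma)}{\Gamma(\beta)\Gamma(\gamma-\beta)}\int_0^1 s^{(\gamma-\beta)-1}(1-s)^{\beta-1}\Bigl(1-\tfrac{x}{x-1}\,s\Bigr)^{-\alpha}\,ds .
\]

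The final step is to recognize this expression, prefactor included, as precisely Euler's integral representation of $F\bigl(\alpha,\gamma-\beta;\gamma;\tfrac{x}{x-1}\bigr)$: the Gamma prefactor matches because $\Gamma\bigl(\gamma-(\gamma-\beta)\bigr)=\Gamma(\beta)$, and the exponents $(\gamma-\beta)-1$ and $\gamma-(\gamma-\beta)-1=\beta-1$ are exactly those prescribed by Proposition \ref{Euler integral rep} applied with second parameter $\gamma-\beta$ and argument $\tfrac{x}{x-1}$. This yields $F(\alpha,\beta;\gamma;x)=(1-x)^{-\alpha}F\bigl(\alpha,\gamma-\beta;\gamma;\tfrac{x}{x-1}\bigr)$, as claimed. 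A more computational alternative would be to verify that both sides satisfy the hypergeometric differential equation and agree to leading order at $x=0$, but the integral route is shorter given that Euler's representation is already at hand.

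I do not expect a genuine obstacle: the argument is essentially bookkeeping. The only points that deserve a line of care are the verification of the displayed affine identity for $1-x(1-s)$ and checking that the integral representations of both sides are valid on a common domain — for instance that $\tfrac{x}{x-1}\notin[1,+\infty)$ whenever $x\notin[1,+\infty)$ — before invoking analytic continuation.
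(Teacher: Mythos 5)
Your argument is correct and is exactly the paper's proof: the paper's own justification reads ``Use the Euler integral representation and the substitution $t=1-s$,'' which is precisely the computation you carry out (including the factorization $1-x(1-s)=(1-x)\bigl(1-\tfrac{x}{x-1}s\bigr)$ and the matching of the Gamma prefactor). Your added remarks on the parameter range and on analytic continuation are sound but not needed for the paper's application, where $x<0$ and $\alpha=\beta=\tfrac12$, $\gamma=1$.
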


\begin{proof}
Use \ref{Euler integral rep} and the integral substitution $t=1-s$.
\end{proof}

\section{Spherical null geodesics with $Q<0$}\label{appendix motivation}

\begin{prop}\label{prop. existence}
In the Kerr-star spacetime $K^*$,  null geodesics with constant radial coordinate and $\mathcal{Q}<0$ exist if and only if $r\in \big[ R_2(a,M),0\big)$,  where $R_2(a,M)$ is given by \eqref{roos of k(r) polynomial}.   
\end{prop}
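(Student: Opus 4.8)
The plan is to pin down the two conditions that a constant-radius null geodesic with $\mathcal{Q}<0$ must satisfy on top of carrying the constants of motion \eqref{class r=const}, and then to show that, once \eqref{class r=const} is substituted, these conditions cut out exactly $r\in[R_2(a,M),0)$.

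For the \emph{necessity} I would argue as follows. Let $\gamma$ be a null geodesic with $r\equiv r_0$ and $\mathcal{Q}<0$. By Prop.~\ref{geod Q<0} the radial polynomial of such a geodesic has only negative roots, and a constant-$r$ geodesic forces $r_0$ to be a multiple zero of $R(r)$; hence $r_0<0$ (in particular $r_0\neq0$). Since the spurious family \eqref{class r=const wrong} has been excluded, the constants are $\Phi=\Phi(r_0)$ and $\mathcal{Q}=\mathcal{Q}(r_0)$ as in \eqref{class r=const}. Setting $u=\cos^2\theta$ and $w:=a^2-\Phi^2-\mathcal{Q}$, the $\theta$-equation \eqref{theta di u} requires $\tilde\Theta(u)=-a^2u^2+wu+\mathcal{Q}\ge0$ for some $u\in(0,1]$. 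Since $\tilde\Theta(0)=\mathcal{Q}<0$ and $\tilde\Theta(1)=-\Phi^2\le0$, this downward parabola can be non-negative only at an interior vertex $u^\ast=w/(2a^2)$, which forces $w>0$ and $\tilde\Theta(u^\ast)=\tfrac{w^2}{4a^2}+\mathcal{Q}\ge0$, i.e. $\mathrm{dis}(r_0):=w(r_0)^2+4a^2\mathcal{Q}(r_0)\ge0$; the remaining requirement $u^\ast<1$ is automatic because $u_+\le1$ whenever $K/E^2=\mathbb{P}^2(r_0)/\big(E^2\Delta(r_0)\big)\ge0$, which holds since $\Delta(r_0)>0$ for $r_0<0$ (the estimate being the one carried out in \ref{case Q<0}). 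Finally, inserting $\Phi(r_0)$ and $\mathcal{Q}(r_0)$ from \eqref{class r=const} into the pair $w(r_0)>0$, $\mathrm{dis}(r_0)\ge0$ turns it into a polynomial condition on $r_0$ whose solution set in $(-\infty,0)$ is precisely $[R_2(a,M),0)$, with $R_2(a,M)$ the relevant root recorded in \eqref{roos of k(r) polynomial}.

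For the \emph{sufficiency} I would fix $r_0\in[R_2(a,M),0)$, put $E=1$, $L=\Phi(r_0)$, $Q=\mathcal{Q}(r_0)$ via \eqref{class r=const}. Because $r^3-6Mr^2+9M^2r-4a^2M=r(r-3M)^2-4a^2M<0$ for $r<0$, one gets $\mathcal{Q}(r_0)<0$; and by the construction underlying \eqref{class r=const} the rescaled radial polynomial $\mathcal{R}$ has a zero of order at least two at $r_0$. Since $r_0\ge R_2(a,M)$ we have $\mathrm{dis}(r_0)\ge0$ and $w(r_0)>0$, hence $\tilde\Theta(u)=-a^2(u-u_-)(u-u_+)$ with $0<u_-\le u_+\le1$, so the latitude band $0\le\theta_1\le\theta\le\theta_2<\pi/2$ (with $\theta_i$ as in \eqref{values of the thetai}) is non-empty. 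I then take the geodesic $\gamma$ through $(t_0,r_0,\theta_1,\phi_0)$ whose initial velocity is the null vector with coordinate components $(t',r',\theta',\phi')=(t'_0,0,0,\phi'_0)$, the values $t'_0,\phi'_0$ being read off the first two equations of \eqref{geodes diff equations}; this vector is null, and $r'=\theta'=0$ there because $R(r_0)=\Theta(\theta_1)=0$. As $r_0$ is a multiple zero of $R$, Prop.~\ref{initial conditions and zeroes} gives $r\equiv r_0$, and since $r_0<0<r_-$ and $\theta$ never reaches $\pi/2$, the curve meets neither $\mathscr{H}$ nor $\Sigma$. Thus $\gamma$ is an honest constant-radius null geodesic of $K^*$ with $\mathcal{Q}<0$.

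The hard part is the algebra buried in the necessity step: substituting the rational functions \eqref{class r=const} into $\mathrm{dis}(r)=\big(a^2-\Phi(r)^2-\mathcal{Q}(r)\big)^2+4a^2\mathcal{Q}(r)$, clearing the positive denominator $a^4(M-r)^4$, and simplifying the numerator — whose top-degree terms cancel — to the polynomial behind \eqref{roos of k(r) polynomial}, then locating its roots on $(-\infty,0)$ and singling out $R_2(a,M)$. One also has to check that the companion condition $w(r)>0$ imposes no extra restriction, which is quick: on $\{\mathrm{dis}\ge0\}$ one has $w^2\ge-4a^2\mathcal{Q}>0$, so $w$ cannot change sign there and retains the sign it has for $r$ near $0^-$, where expanding \eqref{class r=const} gives $w(r)=-\tfrac{4a^2}{M}\,r+O(r^3)>0$.
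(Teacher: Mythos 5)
Your necessity argument coincides in substance with the paper's: both reduce the question to $\mathrm{dis}=w^2+4a^2\mathcal{Q}\ge 0$ after substituting \eqref{class r=const}, and the paper's proof of that direction is exactly the factorization \eqref{equation discriminant}, $\mathrm{dis}=\tfrac{16Mr^2}{(M-r)^4}\Delta(r)\,k(r)$ with $k(r)=2r^3-3Mr^2+a^2M$, followed by the root analysis of the cubic $k$ (two positive roots and the single negative root $R_2(a,M)$ of \eqref{roos of k(r) polynomial}), so that for $r<0$ one has $\mathrm{dis}\ge 0$ iff $r\in[R_2(a,M),0)$. You correctly identify this computation and what it must yield, but you leave it undone; since it is the actual content of the ``only if'' direction it has to be carried out (your observation that $w>0$ imposes no extra restriction is fine, and is checked directly by the paper in the converse step). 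On sufficiency your route is genuinely different and in one respect nicer: you construct the initial null vector explicitly and invoke Prop.~\ref{initial conditions and zeroes} to get $r\equiv r_0$, which treats all of $[R_2(a,M),0)$ uniformly, whereas the paper prescribes the point $(0,r_0,\theta_2,0)$ and the constants of motion, quotes the existence result Prop.~$4.2.6$ of \cite{KBH_book}, and needs a separate limiting argument at $r_0=R_2(a,M)$, where $\mathrm{dis}=0$ and the limit geodesic has constant $\theta$ (Prop.~\ref{u-=u+ Q<0 prop}).

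Two steps of your construction need repair, though. First, you start at $\theta_1=\arccos(\sqrt{u_+})$; but for the admissible radius with $\Phi(r_0)=0$ (this case does occur, cf.\ Prop.~\ref{prop theta behaviours Q<0} and Fig.~\ref{fig: theta phi motion 2}) one has $u_+=1$, so $\theta_1=0$ lies on the axis, where the $(\theta,\phi)$ components of a vector are not available and, more seriously, $\theta=0$ is not a $\theta$-turning point at all: with $L=0$, $\Theta(0)=Q+a^2E^2=a^2E^2(1-u_-)>0$, the vanishing of $\tilde{\Theta}$ at $u=1$ being an artifact of the factor $(1-u)$ coming from the substitution $u=\cos^2\theta$. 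The fix is the paper's choice of starting colatitude $\theta_2=\arccos(\sqrt{u_-})$, which always lies in $(0,\pi/2)$. Second, before applying Prop.~\ref{initial conditions and zeroes} you must verify that the geodesic you launch really carries the constants $(q,E,L,Q)=(0,1,\Phi(r_0),\mathcal{Q}(r_0))$, since that proposition concerns the polynomial $R$ attached to the geodesic's own constants; you assert nullity and then use the double root of $R$ without this check. It is true but requires an argument: $E$ and $L$ come out right because the first two equations of \eqref{geodes diff equations} invert the defining relations of $E,L$ (the $t$--$\phi$ block of $\mathbf{g}$ is nondegenerate off the axis and horizons); for a vector with $r'=\theta'=0$ one has $q\rho^2=\mathbb{D}^2/\sin^2\theta-\mathbb{P}^2/\Delta$, which vanishes because $R(r_0)=0$ and $\Theta(\theta_0)=0$; and $Q$ is then forced by the $\theta$-equation evaluated at the turning point. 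With these verifications (or by simply invoking Prop.~$4.2.6$ of \cite{KBH_book} as the paper does) your sufficiency argument closes, and indeed dispenses with the paper's limiting procedure at $r_0=R_2(a,M)$.
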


\begin{oss}
Note that $R_2(a,M)\geq -M/2$,  so $B(r)<0$ by \eqref{B(r) expression} for all geodesics of this type.
\end{oss}

\begin{proof}[Proof of Prop.  \ref{prop. existence}]
($\Rightarrow$) Consider \eqref{theta di u}.  We must have $\tilde{\Theta}(u)\geq 0$,  hence $\textrm{dis}:=w^2+4a^2\mathcal{Q}\geq 0$,  where $w:=a^2-\Phi^2-\mathcal{Q}$.  We know that for spherical geodesics $\Phi$ and $\mathcal{Q}$ are given by \eqref{class r=const}.  Therefore we have

\begin{align}\label{equation discriminant}
\textrm{dis}=\frac{16Mr^2}{(M-r)^4}\Delta(r) (2r^3-3Mr^2+a^2M)=: \frac{16Mr^2}{(M-r)^4}\Delta(r) k(r)\geq 0.
\end{align}
Since $\Delta(r)>0$ for negative $r$,  the last inequality is equivalent to $k(r)\geq 0$.  The signs of the coefficients of $k(r)$ are $+\; -\; +$,  hence either there are two or zero positive roots.  But the sum of the roots is $3M/2>0$,  hence there are two positive roots.  Moreover $k(0)=a^2M>0$ and $\lim_{x\rightarrow -\infty}k(r)=-\infty$,  hence the third root is negative.  These roots can be expressed in the following way  (see \cite{zwillinger})    

\begin{align}\label{roos of k(r) polynomial}
R_j(a,M)=M\cos\bigg[ \frac{1}{3}\arccos\bigg(1-2\frac{a^2}{M^2}\bigg)-\frac{2}{3}j\pi \bigg]+\frac{M}{2}\hspace{1cm}j=0,1,2 ,
\end{align}
with $R_2(a,M)<0<R_1(a,M)<R_0(a,M)$.\\

\begin{figure}[H]
\centering
\includegraphics[scale=0.5]{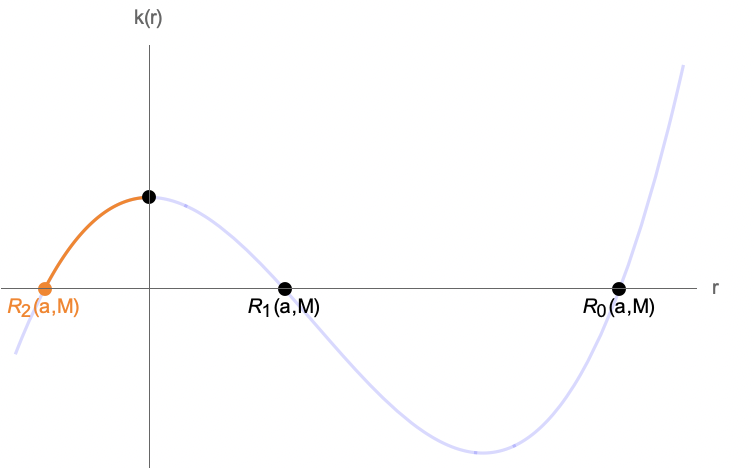} 
\caption{Plot of $k(r)$ for $a=3,M=5$.\\}
\vspace*{2mm}
\end{figure}
Hence \eqref{equation discriminant} can only be satisfied for $r\in \big[ R_2(a,M),0\big)$.
\vspace*{2mm}

\hspace{2mm}($\Leftarrow$) Assume first that $r=r_0\in \big( R_2(a,M),0\big)$ and fix $\Phi=\Phi(r_0)$,  $\mathcal{Q}=\mathcal{Q}(r_0)$ by \eqref{class r=const} so that 

\begin{align}\label{inequality discriminant with min carter}
\textrm{dis}=w^2(r_0)+4a^2\mathcal{Q}(r_0)=[\mathcal{Q}(r_0)+(|\Phi(r_0)|-|a|)^2][\mathcal{Q}(r_0)+(|\Phi(r_0)|+|a|)^2]>0,
\end{align}
with $w(r):=a^2-\Phi^2(r)-\mathcal{Q}(r)$.  Notice that $[\mathcal{Q}(r)+(|\Phi(r)|+|a|)^2]\geq [\mathcal{Q}(r)+(\Phi(r)-a)^2]=4r^2\Delta(r)/(M-r)^2>0$ since $\Delta(r)>0$ for $r<0$.  Therefore \eqref{inequality discriminant with min carter} implies that $[\mathcal{Q}(r_0)+(|\Phi(r_0)|-|a|)^2]>0$.  Since $r_0\in \big( R_2(a,M),0\big)$,  we have $\mathcal{Q}=\mathcal{Q}(r_0)<0$ by  \eqref{class r=const} and $w(r_0)>0$.  Indeed the inequality takes the form

\[
w(r_0)=a^2-\Phi^2(r_0)-\mathcal{Q}(r_0)=-2r_0\frac{r^3_0-3M^2r_0+2a^2M}{(M-r_0)^2}>0.
\]
This inequality is automatically satisfied for $r_0\in (R_2(a,M),0)$,  since $r_0^3-3M^2r_0\geq 0$ for $r_0\in[-\sqrt{3}M,0)$ and $-\sqrt{3}M<-M/2\leq R_2(a,M)$ by \eqref{roos of k(r) polynomial}.

Using \eqref{formula for u pm} we have then

\begin{align*}
0<u_{-}:=\frac{w(r_0)-\sqrt{w^2(r_0)+4a^2\mathcal{Q}(r_0)}}{2a^2}<\frac{w(r_0)}{2a^2}<\frac{a^2-\Phi(r_0)^2+(|\Phi(r_0)|-|a|)^2}{2a^2}=1-\frac{|\Phi(r_0)|}{|a|}\leq 1,
\end{align*}
so that we can define $\theta_2:=\arccos(\sqrt{u_{-}})\in (0,\pi)$.  We now fix the initial point $p_0=(0,r_0,\theta_0,0)\in K^*$ with $\theta_0:=\theta_2$,  $r_0\in(R_2(a,M),0)$ and the following set of constants of motion $(q=0,E=1,L=\Phi(r_0),  K=\mathcal{Q}(r_0)+(\Phi(r_0)-a)^2 )$ by \eqref{class r=const}.  Then $\Theta(\theta_0)=0$ by \eqref{theta di u} and $R(r_0)=0$ by \eqref{class r=const}.

By Prop.  $4.2.6$ in \cite{KBH_book},  there exists a null geodesic starting at $p_0$ with that particular set of constants of motion.

Assume now $r=R_2(a,M)$.  Choose a converging sequence of initial points $p_n=(0,r_n,\theta_n,0)\in K^*$ with $(R_2(a,M),0)\ni r_n\rightarrow R_2(a,M)$,  $\theta_n:=\arccos(\sqrt{u_{-}(r_n)})\in (0,\pi)$ with 

\begin{align*}
u_{-}(r):=\frac{w(r)-\sqrt{w^2(r)+4a^2\mathcal{Q}(r)}}{2a^2},
\end{align*}
and consider the corresponding geodesic $\gamma_n$ constructed above. The constants of motion of $\gamma_n$ converge and hence so do the tangent vectors $\gamma'_n(p_n)$ by \eqref{geodes diff equations}.  By continuity,  there exists a geodesic $\gamma=\lim\gamma_n$ with $\mathcal{Q}=\mathcal{Q}[R_2(a,M)]<0$ by \eqref{class r=const},  starting at the point $p_0=(0,R_2(a,M),\arccos\bigg( \sqrt{u_{-}[R_2(a,M)]}\bigg),0)$ with $(q=0,E=1,L=\Phi[R_2(a,M)],  K=\mathcal{Q}[R_2(a,M)]+[\Phi(R_2(a,M))-a]^2 )$ as constants of motion.  The limit geodesic has constant $\theta$-coordinate and falls in the class described in Prop.  \ref{u-=u+ Q<0 prop}.
\end{proof}

\section*{Declarations}

\subsection*{Financial or Non-financial Intersts} The author has no relevant financial or non-financial interests to disclose.

\subsection*{Conflict of interest} The author has no competing interests to declare that are relevant to the content of this article.

\addcontentsline{toc}{chapter}{Conclusions}

\bibliography{bibliography}

\begin{thebibliography}{10}

\bibitem{hypergeo}
{\sc Andrews, G.~E., Askey, R., and Roy, R.}
\newblock {\em Special Functions}.
\newblock Encyclopedia of Mathematics and its Applications. Cambridge
  University Press, 1999.

\bibitem{Beem_book}
{\sc {Beem, J.K. and Ehrlich, P.E. and Easley, K.L.}}
\newblock {\em {Global Lorentzian Geometry}}.
\newblock Monographs and textbooks in pure and applied mathematics. Dekker,
  1996.

\bibitem{Boyer-Lindquist_paper}
{\sc {Boyer}, R.~H., and {Lindquist}, R.~W.}
\newblock {Maximal analytic extension of the Kerr metric}.
\newblock {\em Journal of Mathematical Physics 8}, 2 (Feb. 1967), 265--281.

\bibitem{Boyer_Price_1965}
{\sc Boyer, R.~H., and Price, T.~G.}
\newblock An interpretation of the kerr metric in general relativity.
\newblock {\em Mathematical Proceedings of the Cambridge Philosophical Society
  61}, 2 (1965), 531–534.

\bibitem{Falcke_shadow}
{\sc Bronzwaer, T., and Falcke, H.}
\newblock The nature of black hole shadows.
\newblock {\em The Astrophysical Journal 920}, 2 (oct 2021), 155.

\bibitem{grav_waves_paper}
{\sc {B. P. Abbott et al. (LIGO Scientific Collaboration and Virgo
  Collaboration)}}.
\newblock Observation of gravitational waves from a binary black hole merger.
\newblock {\em Phys. Rev. Lett. 116\/} (Feb 2016), 061102.

\bibitem{Carter_1966_Axis}
{\sc Carter, B.}
\newblock {Complete analytic extension of the symmetry axis of Kerr's solution
  of Einstein's equations}.
\newblock {\em Phys. Rev. 141\/} (Jan 1966), 1242--1247.

\bibitem{Carter_causality}
{\sc Carter, B.}
\newblock {Global structure of the Kerr family of gravitational fields}.
\newblock {\em Phys. Rev. 174\/} (Oct 1968), 1559--1571.

\bibitem{Chandrasekhar}
{\sc Chandrasekhar, S.}
\newblock {\em {The mathematical theory of black holes}}.
\newblock 1983.

\bibitem{Chandr_Wright}
{\sc Chandrasekhar, S., and Wright, J.~P.}
\newblock {The geodesics in G\"odel'S universe}.
\newblock {\em Proceedings of the National Academy of Sciences of the United
  States of America 47 3\/} (1961), 341--7.

\bibitem{Nemirovski_legendrian}
{\sc Chernov, V., and Nemirovski, S.}
\newblock Legendrian links, causality, and the {L}ow conjecture.
\newblock {\em Geometric and Functional Analysis 19\/} (2009).

\bibitem{Chernov_2016}
{\sc Chernov, V., and Nemirovski, S.}
\newblock {Universal orderability of Legendrian isotopy classes}.
\newblock {\em Journal of Symplectic Geometry 14}, 1 (2016), 149--170.

\bibitem{Nemirovski_2}
{\sc {Chernov, V. and Nemirovski, S.}}
\newblock {Non-negative Legendrian isotopy in $ST^*M$}.
\newblock {\em Geom.Topol. 14}, 1 (2010), 611 -- 626.

\bibitem{Chrusc_singularity}
{\sc Chru\ifmmode~\acute{s}\else \'{s}\fi{}ciel, P., Maliborski, M., and Yunes,
  N.}
\newblock Structure of the singular ring in kerr-like metrics.
\newblock {\em Phys. Rev. D 101\/} (May 2020), 104048.

\bibitem{deFelice_1968}
{\sc de~Felice, F.}
\newblock {Equatorial geodesic motion in the gravitational field of a rotating
  source}.
\newblock {\em Nuovo Cim. B 57\/} (1968), 351.

\bibitem{BH_first_picture}
{\sc {Event Horizon Telescope Collaboration, Akiyama, K., Alberdi, A., et al.}}
\newblock {First M87 Event Horizon Telescope Results. I. The Shadow of the
  Supermassive Black Hole}.
\newblock {\em The Astrophysical Journal Letters 875}, 1 (apr 2019), L1.

\bibitem{frolov_book}
{\sc Frolov, V., and Zelnikov, A.}
\newblock {\em Introduction to Black Hole Physics}.
\newblock OUP Oxford, 2011.

\bibitem{Godel}
{\sc G\"odel, K.}
\newblock {An Example of a New Type of Cosmological Solutions of Einstein's
  Field Equations of Gravitation}.
\newblock {\em Rev. Mod. Phys. 21\/} (Jul 1949), 447--450.

\bibitem{Zollfrei_book}
{\sc Guillemin, V.}
\newblock {\em Cosmology in (2 + 1) -Dimensions, Cyclic Models, and
  Deformations of M2,1. (AM-121)}.
\newblock Princeton University Press, 1989.

\bibitem{ChronogyProtection}
{\sc Hawking, S.~W.}
\newblock Chronology protection conjecture.
\newblock {\em Phys. Rev. D 46\/} (Jul 1992), 603--611.

\bibitem{Hedicke_Suhr}
{\sc {Hedicke, J. and Suhr, S.}}
\newblock {Conformally Embedded Spacetimes and the Space of Null Geodesics}.
\newblock {\em Communications in Mathematical Physics 375}, 2 (2020),
  1561--1577.

\bibitem{Kapec_2019}
{\sc Kapec, D., and Lupsasca, A.}
\newblock Particle motion near high-spin black holes.
\newblock {\em Classical and Quantum Gravity 37}, 1 (dec 2019), 015006.

\bibitem{Kerr-paper}
{\sc Kerr, R.~P.}
\newblock {Gravitational field of a spinning mass as an example of
  algebraically special metrics}.
\newblock {\em Phys. Rev. Lett. 11\/} (Sep 1963), 237--238.

\bibitem{Kundt}
{\sc Kundt, V.~W.}
\newblock {Trägheitsbahnen in einem von Gödel angegebenen kosmologischen
  Modell}.
\newblock {\em Zeitschrift für Physik 145\/} (1956), 611--620.

\bibitem{Landsman_book}
{\sc {Landsman,K.}}
\newblock {\em Foundations of General Relativity: From Einstein to Black
  Holes}.
\newblock Radboud University Press, 2021.

\bibitem{Li}
{\sc Li, L.-X.}
\newblock New light on time machines: against the chronology protection
  conjecture.
\newblock {\em Phys. Rev. D 50\/} (Nov 1994), R6037--R6040.

\bibitem{Low_1995}
{\sc Low, R.}
\newblock Time machines without closed causal geodesics.
\newblock {\em Classical and Quantum Gravity 12}, 5 (may 1995), L37.

\bibitem{LOW_spherical_cotangent}
{\sc Low, R.}
\newblock The space of null geodesics.
\newblock {\em {Nonlinear Analysis: Theory, Methods \& Applications} 47}, 5
  (2001), 3005--3017.
\newblock Proceedings of the Third World Congress of Nonlinear Analysts.

\bibitem{Low_1989}
{\sc Low, R.~J.}
\newblock The geometry of the space of null geodesics.
\newblock {\em Journal of Mathematical Physics 30}, 4 (1989), 809--811.

\bibitem{Nolan_godel}
{\sc Nolan, B.}
\newblock {Causality violation without time-travel: closed lightlike paths in
  Gödel’s universe}.
\newblock {\em Classical and Quantum Gravity 37}, 8 (mar 2020), 085007.

\bibitem{KBH_book}
{\sc O'Neill, B.}
\newblock {\em {The geometry of Kerr black holes}}.
\newblock Ak Peters Series. Taylor \& Francis, 1995.

\bibitem{Penrose_twistor}
{\sc Penrose, R.}
\newblock {Twistor quantization and curved space-time}.
\newblock {\em Int. J. Theor. Phys. 1\/} (1968), 61--99.

\bibitem{Penrose_book}
{\sc {Penrose, R.}}
\newblock {\em {Techniques of Differential Topology in Relativity}}.
\newblock {CBMS-NSF Regional Conference Series in Applied Mathematics}. Society
  for Industrial and Applied Mathematics, 1972.

\bibitem{Schw_paper}
{\sc {Schwarzschild}, K.}
\newblock {{\"U}ber das Gravitationsfeld eines Massenpunktes nach der
  Einsteinschen Theorie}.
\newblock {\em Sitzungsberichte der K{\"o}niglich Preussischen Akademie der
  Wissenschaften\/} (Jan. 1916), 189--196.

\bibitem{SUHR_2013}
{\sc {Suhr, S.}}
\newblock {A Counterexample to Guillemin’s Zollfrei Conjecture}.
\newblock {\em Journal of Topology and Analysis 05}, 03 (aug 2013), 251--260.

\bibitem{spherical_photon}
{\sc Teo, E.}
\newblock {Spherical photon orbits around a Kerr black hole}.
\newblock {\em General Relativity and Gravitation 35}, 11 (2003).

\bibitem{Walker_Penrose}
{\sc Walker, M., and Penrose, R.}
\newblock {On quadratic first integrals of the geodesic equations for type
  $\{22\}$ spacetimes}.
\newblock {\em Commun. Math. Phys. 18\/} (1970), 265--274.

\bibitem{Wilkins}
{\sc Wilkins, D.}
\newblock Bound geodesics in the kerr metric.
\newblock {\em Phys. Rev. D 5\/} (Feb 1972), 814--822.

\bibitem{zwillinger}
{\sc Zwillinger, D.}
\newblock {\em {CRC Standard Mathematical Tables and Formulae}}.
\newblock Discrete mathematics and its applications. CRC Press, 2012.

\end{thebibliography}
\bibliographystyle{acm}

\end{document}